\journal{}
\newtheorem{theorem}{Theorem}[section]
\newtheorem{lemma}[theorem]{Lemma}
\newtheorem{problem}[theorem]{Problem}
\begin{document}

\begin{frontmatter}

\title{{\bf Cubic bipartite graphs with minimum spectral gap}}
\author[a]{Ruifang Liu}
\author[a]{Jie Xue\corref{cor1}}
\address[a]{School of Mathematics and Statistics, Zhengzhou University, 450001 Zhengzhou, China}
\cortext[cor1]{Corresponding author. Email addresses: rfliu@zzu.edu.cn, jie\_xue@126.com.}

\begin{abstract}
The difference between the two largest eigenvalues of the adjacency matrix of a graph $G$ is called
the spectral gap of $G.$ If $G$ is a regular graph, then its spectral gap is equal to algebraic connectivity.
Abdi, Ghorbani and Imrich, in [European J. Combin. 95 (2021) 103328], showed that the minimum algebraic connectivity of cubic connected graphs on $2n$ vertices is $(1+o(1))\frac{\pi^{2}}{2n^{2}}$, which is attained on non-bipartite graphs. Motivated by the above result,
we in this paper investigate the algebraic connectivity of cubic bipartite graphs.
We prove that the minimum algebraic connectivity of cubic bipartite graphs on $2n$ vertices is $(1+o(1))\frac{\pi^{2}}{n^{2}}$.
Moreover, the unique cubic bipartite graph with minimum algebraic connectivity is completed characterized.
Based on the relation between the algebraic connectivity and spectral gap of regular graphs,
the cubic bipartite graph with minimum spectral gap and the corresponding asymptotic value are also presented.
In [J. Graph Theory 99 (2022) 671--690], Horak and Kim established a sharp upper bound for the number of perfect matchings in terms of the Fibonacci number.
We obtain a spectral characterization for the extremal graphs by showing that a cubic bipartite graph has the maximum number of perfect matchings
if and only if it minimizes the algebraic connectivity.
\end{abstract}

\begin{keyword} Spectral gap\sep Algebraic connectivity\sep Cubic\sep Bipartite graph\sep Perfect matching
\end{keyword}

\end{frontmatter}

\section{Introduction}
\noindent All graphs considered in this paper are simple, connected and undirected. Given a graph $G$, its Laplacian matrix is defiened as
$L(G)=D(G)-A(G)$, where $A(G)$ and $D(G)$ are the adjacency matrix and the diagonal matrix of vertex degrees of $G$, respectively.
The Laplacian matrix is also known as the Kirchhoff matrix.
Research on the Laplacian matrix can be traced back to the famous Matrix-tree Theorem \cite{Kirchhoff1847}.
Readers can find more results on the Laplacian matrix in survey papers \cite{Merris1994,Merris1995} by Merris and \cite{Mohar1992} by Mohar.

The Laplacian matrix is symmetric, positive semidefinite, and its row sum is zero.
The eigenvalues of $L(G)$ are called the Laplacian eigenvalues of $G$.
The Laplacian eigenvlaues are related to some structural properties of a graph (see, e.g., \cite{Cardoso2007,Gu2022,Hedetniemi2016,Higuchi2009,Wu2014,Zhang2003}).
The second smallest Laplacian eigenvalue is popularly known as the algebraic connectivity of $G,$ and is usually denoted by $a(G)$. The difference between the two largest eigenvalues of the adjacency matrix of a graph $G$ is
called the spectral gap of $G.$ If $G$ is a regular graph, then its spectral gap is equal to the algebraic connectivity.
The algebraic connectivity is an important parameter in spectral graph theory, and has received much attention (see, e.g. \cite{deAbreu2007,Guo2011,Li2010,Wang2010,Zhang2017ELA}).
Fiedler \cite{Fiedler1973} proved that a graph is connected if and only if its algebraic connectivity is positive.
Moreover, the algebraic connectivity provides a lower bound of the vertex (edge) connectivity of a graph \cite{Fiedler1973,Kirkland2002}.
On the other hand, the algebraic connectivity has important applications in other research fields as well.
As an example, in consensus problems, the algebraic connectivity plays a crucial role in convergence analysis of consensus and alignment algorithms,
and it quantifies the speed of convergence of consensus algorithms \cite{Olfati2004,Ogiwara2017}.

A classical topic on the algebraic connectivity is to determine the minimum (maximum) algebraic connectivity of graphs under some constraints.
For trees, it is easy to see that the minimum and maximum algebraic connectivity are attained on the path and the star, respectively.
More researches on the algebraic connectivity of trees were seen in \cite{Grone1990,Kolokolnikov2015,Shao2008}.
The minimum algebraic connectivity of graphs with cycles is considered in many papers.
In \cite{Fallat1998}, Fallat and Kirkland conjectured that the lollipop graph is the unique graph with minimum algebraic connectivity among all graphs with given girth.
This conjecture was verified in \cite{Fallat2002} for special graphs and was completely confirmed by Guo \cite{Guo2008}.
The minimum algebraic connectivity of Hamiltonian graphs was determined by Guo, Zhang and Yu \cite{Guo2018}.
More generally, Xue, Lin and Shu \cite{Xue2019} studied minimum algebraic connectivity of graphs with given circumference.
It is worth mentioning that the eigenvector plays an important role in characterizing graphs with minimum (maximum) algebraic connectivity.
An eigenvector corresponding to the algebraic connectivity is called a Fiedler vector.
Some important properties for the Fiedler vector were established by Fiedler \cite{Fiedler1975} and Kirkland, Rocha and Trevisan \cite{Kirkland2015}, respectively.

A graph is $k$-regular if the degree of each vertex is equal to $k$.
In general, 3-regular graphs are also called cubic graphs, which play a prominent role in graph theory.
During recent years, many researches concerning cubic graphs were reported (see, e.g., in \cite{Horak2022,Macajova2021A,Macajova2021B,Das2021,Knauer2019,Cames2022}).

The cubic graph with minimum algebraic connectivity (spectral gap) was discussed in \cite{Brand2007,Guiduli1997}.
The unique extremal graph was determined by Brand, Guiduli and Imrich \cite{Brand2007}.
Recently, Abdi, Ghorbani and Imrich \cite{Abdi2021} obtained the asymptotic value of minimum algebraic connectivity of cubic connected graphs.
Very recently, we are happy to see that Abdi and Ghorbani \cite{Abdi2022} determined the structure of
connected quartic graphs with minimum algebraic connectivity (spectral gap) and the corresponding asymptotic value.

\begin{theorem}{\rm (\cite{Abdi2021})}
  The minimum algebraic connectivity of connected cubic graphs on $2n$ vertices is $(1+o(1))\frac{\pi^{2}}{2n^{2}}$.
\end{theorem}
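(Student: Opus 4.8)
The plan is to prove the statement by establishing matching upper and lower bounds for the minimum of $a(G)$ over connected cubic graphs $G$ on $2n$ vertices. Since $G$ is $3$-regular, $L(G)=3I-A(G)$, so $a(G)=3-\lambda_2(A(G))$ coincides with the spectral gap and admits the Rayleigh characterization $a(G)=\min_{x\perp \mathbf{1},\,x\neq 0}\frac{\sum_{uv\in E(G)}(x_u-x_v)^2}{\sum_{v}x_v^2}$. The guiding principle is the electrical-network heuristic: if the Fiedler vector varies essentially along one direction, then $G$ behaves like a weighted path and $a(G)\approx \pi^2/(M\cdot R)$, where $M=2n$ is the total mass (number of vertices) and $R$ is the total effective resistance in that direction. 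Thus minimizing $a(G)$ amounts to maximizing $R$ subject to cubicity, and I would organize the whole argument around this quantity.

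For the upper bound I would exhibit an explicit family attaining $(1+o(1))\pi^2/(2n^2)$. Take $m=n/2$ copies of the diamond $K_4-e$, whose two degree-two vertices serve as attachment points, and string them into a path by a single bridge edge between consecutive diamonds, capping the two ends by a small cubic gadget so that the graph is $3$-regular and connected on $2n$ vertices. A short computation shows that the effective resistance across one diamond equals $1$ (its two internal vertices sit at equal potential, leaving two series pairs in parallel), the same as a bridge, so the total series resistance along the chain is $R=(2m-1)+O(1)=(1+o(1))n$. Inserting the sampled cosine $x_v=\cos(\pi \ell(v)/R)$, where $\ell(v)$ is the cumulative resistance from one end to $v$ (centered to be orthogonal to $\mathbf{1}$), into the Rayleigh quotient yields energy $\approx \int_0^R (x')^2\,d\ell=\pi^2/(2R)$ against norm $\approx \int_0^R \cos^2(\pi\ell/R)\cdot\tfrac{M}{R}\,d\ell=R$, whence $a(G)\le (1+o(1))\pi^2/(2R^2)=(1+o(1))\pi^2/(2n^2)$.

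The hard direction is the matching lower bound: every connected cubic graph on $2n$ vertices satisfies $a(G)\ge (1-o(1))\pi^2/(2n^2)$. I would start from a unit Fiedler vector $f$, order the vertices as $f_{(1)}\le\cdots\le f_{(2n)}$, and use the coarea identity to compare $\sum_{uv}(f_u-f_v)^2$ with the cut sizes $e_i=e(\{v_{(1)},\dots,v_{(i)}\},\text{rest})$ across each level, invoking the structural properties of the Fiedler vector recalled in the introduction to keep the level sets connected. The aim is to dominate the quadratic form of $G$ by that of an auxiliary weighted path and apply a discrete Wirtinger/Poincar\'e inequality, giving $a(G)\ge (1-o(1))\pi^2/(M R^\ast)$ for the effective resistance $R^\ast$ in the Fiedler direction. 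The crux — and the step I expect to be the main obstacle — is the sharp combinatorial estimate $R^\ast\le (1+o(1))n$ for all cubic graphs: intuitively each unit of \emph{series} resistance must be paid for by roughly two vertices, since the smallest cubic block isolating a $1$-edge cut (the diamond) already spends four vertices for two units of resistance. Making this tradeoff rigorous — simultaneously treating graphs that are genuinely two-dimensional (where $a(G)=\Omega(1)$ and the bound is trivial) and those that are path-like but might try cleverer blocks than diamonds — is where the real work lies and where the precise constant $\tfrac12$ is pinned down.
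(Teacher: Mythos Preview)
The paper does not prove this statement. Theorem~1.1 is quoted from Abdi, Ghorbani and Imrich \cite{Abdi2021} purely as motivation; the paper's own contribution is the bipartite analogue (Theorems~\ref{the_min_val} and~\ref{the_extr}), where the constant becomes $\pi^2/n^2$ rather than $\pi^2/(2n^2)$. So there is no proof in this paper to compare your proposal against.

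That said, a brief assessment of your outline on its own terms: the upper-bound half is essentially sound. Your diamond-chain construction is close in spirit to the Brand--Guiduli--Imrich extremal graph, and the cosine test vector combined with the effective-resistance bookkeeping does yield the right asymptotic (note that in your final line the norm should be $M/2=n$, not $R$; the two happen to coincide here because $R\approx n$, but the reasoning as written conflates them). The lower-bound half, however, is not a proof but a wish list. The coarea/Wirtinger reduction to a weighted path is a standard move, but the step you flag --- showing $R^\ast\le (1+o(1))n$ uniformly over all connected cubic graphs --- is the entire content of the theorem, and you offer only the intuition that diamonds are optimal blocks. In the actual proof by Abdi--Ghorbani--Imrich, this is handled not by a general resistance bound but by first invoking the structural characterization of the extremal graph (due to Brand--Guiduli--Imrich) and then computing its algebraic connectivity directly, much as the present paper does for $H_{2n}$ in Section~\ref{ex_value}. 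Your proposed route via a universal resistance inequality would be genuinely different and more conceptual if it worked, but as it stands the crux is left open.
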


We remark that, among cubic connected graphs, the minimum algebraic connectivity is attained on a non-bipartite cubic graph.
Hence it is very interesting to consider the following problem.

\begin{problem}
What is the minimum algebraic connectivity of connected cubic bipartite graphs? Moreover, characterize the unique extremal graph.
\end{problem}

In this paper, we focus on the above problem and prove Theorems \ref{the_min_val} and \ref{the_extr}, which indicate that the minimum algebraic connectivity of cubic bipartite graphs is twice that of cubic graphs.

\begin{theorem}\label{the_min_val}
The minimum algebraic connectivity of connected cubic bipartite graphs on $2n$ vertices is $(1+o(1))\frac{\pi^{2}}{n^{2}}$.
\end{theorem}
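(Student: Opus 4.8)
The plan is to pin down the asymptotic value by proving matching upper and lower bounds of $(1+o(1))\frac{\pi^{2}}{n^{2}}$. The upper bound will come from an explicit path-like construction together with a cosine test vector, while the lower bound is the substantial part: since every cubic bipartite graph is in particular cubic, the result of Abdi, Ghorbani and Imrich quoted above only yields $a(G)\ge(1+o(1))\frac{\pi^{2}}{2n^{2}}$, so the whole point is to gain the missing factor of $2$ from bipartiteness.

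For the upper bound I would start from the ladder $P_{n}\,\square\,K_{2}$ on $2n$ vertices, whose Laplacian spectrum consists of the sums of eigenvalues of $L(P_{n})$ and $L(K_{2})$; its algebraic connectivity is $2-2\cos(\pi/n)=(1+o(1))\frac{\pi^{2}}{n^{2}}$. Since the four corner vertices have degree $2$, I would cap the two ends by a bounded bipartite gadget so that the resulting graph $B_{n}$ is $3$-regular, bipartite, connected, and still has $2n+O(1)$ vertices. To bound $a(B_{n})$ from above I would feed into the Rayleigh quotient $\frac{\sum_{uv\in E}(f_{u}-f_{v})^{2}}{\sum_{v}f_{v}^{2}}$ the vector that is constant on each rung, $f(i,j)=\cos\!\big(\frac{(2i-1)\pi}{2n}\big)$, extended to the cap vertices by the value of the nearest rung. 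This $f$ is orthogonal to $\mathbf{1}$, the rung edges contribute nothing, the two rails reproduce exactly the Rayleigh quotient of the Fiedler vector of $P_{n}$, and the $O(1)$ cap edges contribute only $o\!\big(\frac{\pi^{2}}{n^{2}}\big)$; hence $a(B_{n})\le(1+o(1))\frac{\pi^{2}}{n^{2}}$.

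For the lower bound, let $G$ be any connected cubic bipartite graph with parts $A,B$, $|A|=|B|=n$, and let $f$ be a Fiedler vector; order the vertices $v_{1},\dots,v_{2n}$ so that $f_{v_{1}}\le\cdots\le f_{v_{2n}}$ and write $g_{k}=f_{v_{k+1}}-f_{v_{k}}\ge 0$ and $S_{k}=\{v_{1},\dots,v_{k}\}$. The engine is a parity fact: counting $A$--$B$ edges across a cut $(S,\overline{S})$ gives $3\big(|S\cap A|-|S\cap B|\big)=p-q$, where $p,q$ are the numbers of cut edges leaving $A$ and $B$ respectively. In particular a cubic bipartite graph has \emph{no bridge}, so every nontrivial edge cut has size $\ge 2$, and every cut of size exactly $2$ forces $p=q=1$ and is \emph{balanced}, $|S\cap A|=|S\cap B|$. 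Using $e(S_{k},\overline{S}_{k})\ge 2$ in the telescoping estimate $(f_{v_{i}}-f_{v_{j}})^{2}\ge\sum_{i\le k<j}g_{k}^{2}$ already gives $a(G)\ge 2\,a(P_{2n})=(1+o(1))\frac{\pi^{2}}{2n^{2}}$, which matches the generic cubic bound but is still short by a factor of $2$.

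To recover the full constant I would exploit the balance condition to reduce the effective length from $2n$ to $n$. Morally, at every thin cut the two colour classes advance in lockstep, so the vertices organise into $\approx n$ levels, each carrying one $A$- and one $B$-vertex of nearly equal $f$-value and joined to the next level by at least two edges. Passing to the level values $\phi_{1},\dots,\phi_{n}$ (with $\sum_{\ell}\phi_{\ell}=0$) converts the Rayleigh quotient into $\frac{\sum_{uv}(f_{u}-f_{v})^{2}}{\sum_{v}f_{v}^{2}}\ge\frac{2\sum_{\ell}(\phi_{\ell+1}-\phi_{\ell})^{2}}{2\sum_{\ell}\phi_{\ell}^{2}}\ge a(P_{n})=(1+o(1))\frac{\pi^{2}}{n^{2}}$, as required. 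The hard part is precisely this reduction: the Fiedler vector is not literally constant on balanced pairs, so the merging of the $2n$ vertices into $n$ levels must be carried out approximately, with the error absorbed into the $(1+o(1))$ factor. I expect this to be the main obstacle, and I would handle it either by a weighted-path quotient together with an eigenvalue-interlacing argument controlling how far the optimal $f$ can deviate from being level-constant, or by first establishing the exact extremal structure (as in the companion theorem characterising the unique minimiser) and then reading off the asymptotics of its algebraic connectivity.
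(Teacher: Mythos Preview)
Your upper bound is fine and is essentially what the paper does: it builds the path-like cubic bipartite graph $H_{2n}$ and plugs a Fiedler vector of a path into the Rayleigh quotient to get $a(H_{2n})\le a(P_{n-4})=(1+o(1))\pi^{2}/n^{2}$.

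The gap is in your lower bound. The level-set/telescoping argument with $e(S_{k},\overline{S_{k}})\ge 2$ is sound as far as it goes, but the step ``the vertices organise into $\approx n$ levels, each carrying one $A$- and one $B$-vertex of nearly equal $f$-value'' is exactly the point that needs proof, and you do not have a mechanism for it. Knowing that every \emph{size-$2$} cut is balanced tells you nothing about the many cuts $S_{k}$ that have size $\ge 3$, and even at a thin cut there is no reason the two colour classes should carry nearly equal Fiedler values. The proposed fixes (weighted-path quotient plus interlacing to control deviation from level-constancy) are not concrete enough to be assessed; as stated this is a hope, not an argument.

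The paper does not attempt any such direct lower bound. It takes precisely your fallback route: the bulk of the work (Section~2) is a delicate structural argument, using edge-switching (Lemma~2.1) and Fiedler-vector monotonicity lemmas, that pins down the unique minimiser $H_{2n}$. Once the extremal graph is known, the lower bound is almost free: $H_{2n}$ has an obvious $u_{i}\leftrightarrow v_{i}$ symmetry, so (Lemma~3.1) one may take a Fiedler vector with $\mathtt{z}_{u_{i}}=\mathtt{z}_{v_{i}}$, and then the Rayleigh quotient of $H_{2n}$ dominates that of $P_{n}$ termwise, giving $a(H_{2n})\ge a(P_{n})$. In other words, the ``pairing into $n$ levels'' that you wanted for a general $G$ is obtained \emph{exactly}, but only after one has proved that the extremal $G$ is the specific graph where such a pairing is literal. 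If you want to avoid the structural characterisation, you would need a genuinely new idea for the lower bound.
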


\begin{figure}[ht]
  \centering
  \includegraphics[scale=0.7]{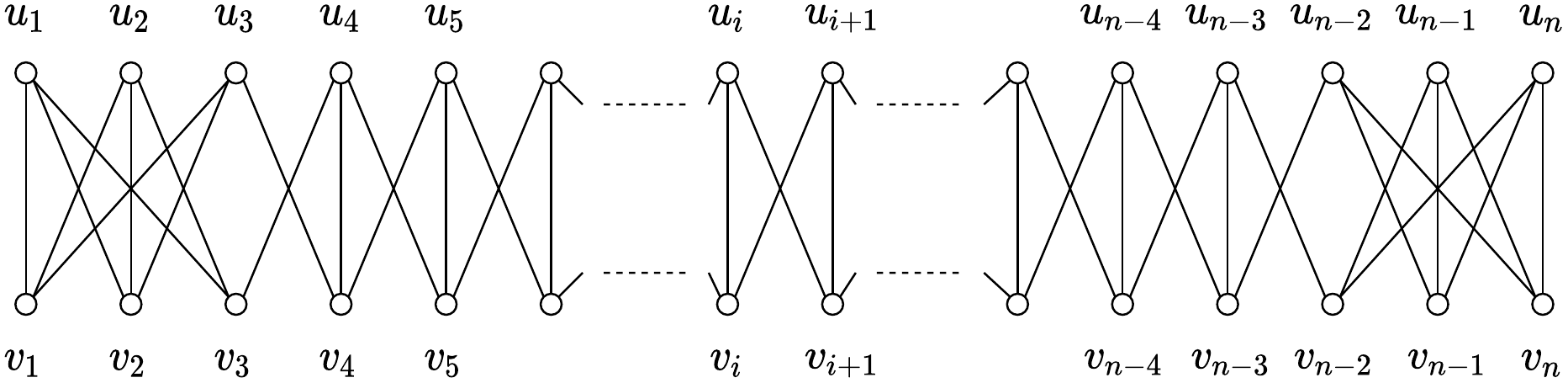}
  \caption{A cubic bipartite graph $H_{2n}.$}
  \label{fig_main}
  \end{figure}

In order to prove the asymptotic value of the minimum algebraic connectivity, we need to know the structure of the extremal graph.
For $n\geq 6$, let $H_{2n}$ be a cubic bipartite graph on $2n$ vertices, as shown in Figure \ref{fig_main}.
We show that $H_{2n}$ is the unique extremal graph.

\begin{theorem}\label{the_extr}
  Among all connected cubic bipartite graphs, $H_{2n}$ is the unique graph with the minimum algebraic connectivity, where $n\geq 6.$
\end{theorem}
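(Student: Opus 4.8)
The plan is to argue via the Fiedler vector together with a reduction to a one-dimensional (path-like) optimization, followed by a finite comparison of admissible gadgets. Let $G$ be a connected cubic bipartite graph on $2n$ vertices of minimum algebraic connectivity $a=a(G)$, and let $f$ be an associated Fiedler vector. By Theorem \ref{the_min_val} we may assume $a=(1+o(1))\pi^{2}/n^{2}$, and in particular $a=O(1/n^{2})$; this smallness is what drives the whole argument. First I would record the eigenequation, which for each vertex $v$ reads $\sum_{u\sim v} f(u)=(3-a)f(v)$, together with the Rayleigh characterization
\begin{equation}
a=\min_{\mathbf{1}^{\top}x=0,\ x\neq 0}\frac{\sum_{uv\in E(G)}(x_u-x_v)^2}{\sum_{v}x_v^2}.
\end{equation}
Next I would invoke the classical structural results on Fiedler vectors of Fiedler \cite{Fiedler1975} and of Kirkland, Rocha and Trevisan \cite{Kirkland2015} to control the sign pattern of $f$: the vertices split into positive, zero, and negative classes, and the valuation is monotone along suitable paths toward the ``characteristic'' region.

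The heart of the proof is a structural reduction showing that $G$ must be a \emph{linear chain of small bipartite gadgets} of bounded width. The idea is to order the vertices by their Fiedler value and to show, using the smallness of $a$ and the Rayleigh quotient above, that the number of edges crossing any level set of $f$ is uniformly bounded; otherwise one could reroute a ``thick'' or branched portion of $G$ into a longer thin chain and strictly decrease the Rayleigh quotient of a suitable test vector, contradicting minimality. This confines $G$ to a bounded-bandwidth linear structure. It is exactly here that bipartiteness enters: the tightest gadget used in the non-bipartite extremal cubic graph of \cite{Abdi2021} relies on short odd cycles to halve the effective length, and forbidding odd cycles forces a longer effective chain, accounting for the factor of $2$ between $\pi^{2}/(2n^{2})$ and $\pi^{2}/n^{2}$.

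Once $G$ is known to be a chain of gadgets, the eigenequation along the chain becomes a constant-coefficient linear recurrence whose solutions are discrete cosines; the algebraic connectivity is then the least positive root of a transcendental equation involving $\cos$, and minimizing $a$ is equivalent to maximizing the effective length of the chain subject to the cubic and bipartite constraints. I would carry this out by a finite case analysis over the admissible interior gadgets and the two end-caps, using eigenvalue interlacing and quotient (divisor) matrices to compare candidates, and a direct perturbation/edge-switching argument to show that any deviation from the gadget realized by $H_{2n}$ strictly increases $a$. The boundary conditions at the two ends behave like Neumann conditions and single out the specific end structure of $H_{2n}$; combining the interior and boundary optimizations yields both that $H_{2n}$ is extremal and that it is the unique minimizer for $n\geq 6$, the lower bound on $n$ being needed to guarantee the chain is long enough for the monotonicity and level-cut estimates to apply and to rule out small sporadic exceptions.

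The step I expect to be the main obstacle is the structural reduction in the second paragraph: rigorously proving that the minimizer is a bounded-width linear chain rather than a branched or mildly expanding configuration. This requires turning the qualitative intuition (``thin and long is best'') into a quantitative rerouting inequality on Rayleigh quotients, all while ensuring every switching operation preserves both $3$-regularity and bipartiteness, so that one never leaves the class in which the minimum is sought.
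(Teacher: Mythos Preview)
Your proposal has a genuine gap and also a logical dependency issue, and it differs substantially from the paper's approach.

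\textbf{Logical dependency.} You open by invoking Theorem~\ref{the_min_val} to obtain $a=(1+o(1))\pi^{2}/n^{2}$. In the paper that theorem is a \emph{consequence} of Theorem~\ref{the_extr}: one first identifies $H_{2n}$ as the extremal graph and then computes $a(H_{2n})$ (Theorem~\ref{the_H}). So as written your argument is circular. What you actually need is only the upper bound $a(G)\le a(H_{2n})=O(1/n^{2})$, which could be obtained directly by a test-vector estimate on $H_{2n}$; you should make that independence explicit.

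\textbf{The main gap.} The step you yourself flag as ``the main obstacle'' is precisely where the proof is missing. You assert that the minimizer must be a bounded-width linear chain because otherwise one could ``reroute a thick or branched portion'' and strictly decrease the Rayleigh quotient, but you do not produce any such rerouting inequality, nor do you specify the switching moves that preserve $3$-regularity and bipartiteness. Without this, the subsequent steps (constant-coefficient recurrence along the chain, gadget enumeration, end-cap comparison via divisor matrices) have nothing to act on. Moreover, even if the asymptotic reduction could be made rigorous, an argument driven by the smallness of $a=O(1/n^{2})$ would at best yield the conclusion for all sufficiently large $n$, not for every $n\ge 6$ as the theorem claims; you give no mechanism for closing the gap down to $n=6$.

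\textbf{How the paper proceeds instead.} The paper does not use the smallness of $a$ at all, nor any gadget/recurrence analysis. It proves a single edge-switching lemma (Lemma~\ref{lem_trans}): if $\{u_1v_1,u_2v_2\}$ is a pair of independent edges with $\mathtt{x}_{u_1}>\mathtt{x}_{u_2}$ and $\mathtt{x}_{v_1}\le\mathtt{x}_{v_2}$, then replacing them by $\{u_1v_2,u_2v_1\}$ strictly decreases $a$, provided the resulting graph is connected. From this, Lemmas~\ref{lem_prop} and~\ref{lem_adja} extract concrete adjacency constraints that any extremal $G$ must satisfy with respect to the Fiedler ordering. The proof then determines the adjacencies of $u_1,u_2,u_3,v_1,v_2,v_3$ one by one (Lemmas~\ref{lem_neq}--\ref{lem_u2v3}), and propagates along the chain by induction (Lemma~\ref{lem_gen}), arriving at $H_{2n}$ without any asymptotic or divisor-matrix input. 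If you want to salvage your plan, the edge-switching Lemma~\ref{lem_trans} is exactly the ``quantitative rerouting inequality'' you are looking for; but once you have it, the rest of your outline (cosine recurrences, interlacing, gadget enumeration) becomes unnecessary.
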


A matching in a graph is a set of edges such that no two of which have a common vertex.
A matching is perfect if every vertex of the graph is incident to an edge of the matching.
In \cite{Horak2022}, Horak and Kim studied the number of perfect matchings in cubic graphs.
For cubic bipartite graphs, they provided the maximum number of perfect matchings by the Fibonacci numbers.

\begin{theorem}{\rm (\cite{Horak2022})}\label{the_extr_matc}
For $n\geq 6$, $H_{2n}$ is the unique graph with the maximum number of perfect matchings among all connected cubic bipartite graphs on $2n$ vertices.
\end{theorem}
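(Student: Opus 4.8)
The plan is to translate the statement into a permanent-maximisation problem and then bound the count. Writing $m(G)$ for the number of perfect matchings of $G$ and recalling that for a bipartite graph with colour classes of size $n$ one has $m(G)=\mathrm{per}(B)$ for the $n\times n$ biadjacency matrix $B$, the theorem becomes: among all $\{0,1\}$-matrices of order $n$ with every row and column sum equal to $3$ whose associated bipartite graph is connected, the biadjacency matrix of $H_{2n}$ is the unique maximiser of the permanent. I would split the work into two essentially independent parts: (i) an exact evaluation of $m(H_{2n})$, and (ii) a matching upper bound $m(G)\le m(H_{2n})$ for every connected cubic bipartite $G$ on $2n$ vertices, together with the characterisation of equality.

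For part (i) I would exploit the repetitive, ladder-like shape of $H_{2n}$ --- the same feature that makes it ``path-like'' and hence the candidate for minimum algebraic connectivity in Theorem \ref{the_extr}. Cutting the graph into a linear chain of identical gadgets and conditioning on the way a perfect matching meets a cut, the matching numbers of the truncated pieces satisfy a second-order linear recurrence. Solving it yields a closed form governed by the golden ratio, i.e.\ a Fibonacci number, once the two boundary contributions coming from the end-gadgets are fixed; this reproduces the Fibonacci expression of Horak and Kim.

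Part (ii) is the heart of the argument and where I expect the real difficulty. I would take a connected cubic bipartite $G$ maximising $m$ and argue by induction on $n$, but with a strengthened hypothesis: instead of restricting to $3$-regular graphs I would bound $m$ over the wider family of connected bipartite graphs with all degrees at most $3$ and a controlled number of degree-$2$ vertices, since the deletions used to set up the induction leave exactly such near-cubic graphs. The inductive step picks a well-chosen degree-$3$ vertex $v$ (taken on a shortest cycle, to keep the reduced graphs connected) and uses the deletion identity $m(G)=\sum_{i} m(G-v-u_i)$ over the three edges $vu_i$ at $v$, estimating each term by the inductive hypothesis. To convert this into a sharp extremal bound with uniqueness I would combine it with a local $2$-switch (replacing edges $ab,cd$ by $ad,cb$): such a switch preserves bipartiteness and $3$-regularity, and via the deletion identity one checks it never decreases $m$, so maximality forces the local neighbourhood structure at every vertex to be ladder-like; iterating then pins the global structure down to $H_{2n}$.

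The main obstacle is precisely the strictness and uniqueness bookkeeping in part (ii): controlling how the permanent changes under each deletion and each switch across all local configurations, while simultaneously maintaining connectivity and the bipartite $3$-regular constraints through every reduction. I expect the delicate point to be ruling out the sporadic small competitors with short cycles (such as $K_{3,3}$ or short prisms) that could a priori rival the Fibonacci value; verifying that none of these beat $m(H_{2n})$ and cleaning up the small base cases of the induction is exactly what the hypothesis $n\ge 6$ should be there to secure.
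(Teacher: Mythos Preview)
The paper does not prove this theorem at all: it is quoted verbatim from Horak and Kim \cite{Horak2022} and used only as a black box to derive Theorem~\ref{the_equ}. There is therefore no ``paper's own proof'' to compare your proposal against; any assessment of your outline would have to be made relative to the original Horak--Kim argument, which is not reproduced here.

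That said, a brief comment on the proposal itself. Part (i) is fine: the ladder structure of $H_{2n}$ does give a linear second-order recurrence, and the Fibonacci answer drops out. Part (ii) is where your sketch is thin. The assertion that a $2$-switch ``never decreases $m$'' is not true in general for cubic bipartite graphs---the effect of replacing $\{ab,cd\}$ by $\{ad,cb\}$ on the permanent depends on the rest of the graph, and one can easily manufacture examples where the permanent strictly drops. So you cannot simply push toward the ladder by local switches without a much more careful accounting of which switches are admissible and why they are monotone for a maximiser. The induction via the deletion identity $m(G)=\sum_i m(G-v-u_i)$ is the natural opening move, but each $G-v-u_i$ is no longer cubic (two or more vertices acquire degree~$2$), so the strengthened hypothesis you allude to must track not just the number of degree-$2$ vertices but their positions relative to one another, and must still deliver a sharp Fibonacci-type bound with equality characterised. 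That bookkeeping is the entire content of the Horak--Kim paper and is substantially more intricate than your outline suggests.
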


It is worth mentioning that, for small $n\in \{3,4,5\}$, the extremal graphs with minimum algebraic connectivity and
maximum number of perfect matchings are exhibited in Figures \ref{fig_ma} and \ref{fig_mm}, respectively.
For simplicity of the statement, we omit these small cases in Theorems \ref{the_extr} and \ref{the_extr_matc}.

\begin{figure}[ht]
  \centering
  \includegraphics[scale=0.7]{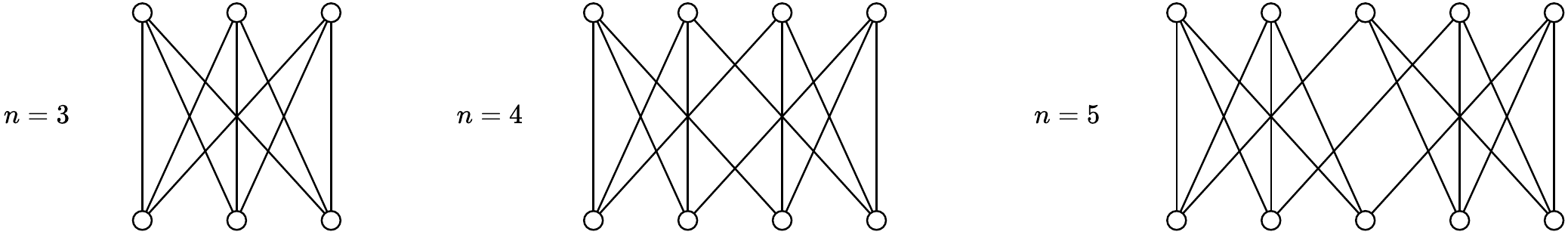}
  \caption{Cubic bipartite graphs with minimum algebraic connectivity.}
  \label{fig_ma}
  \end{figure}

  \begin{figure}[ht]
    \centering
    \includegraphics[scale=0.7]{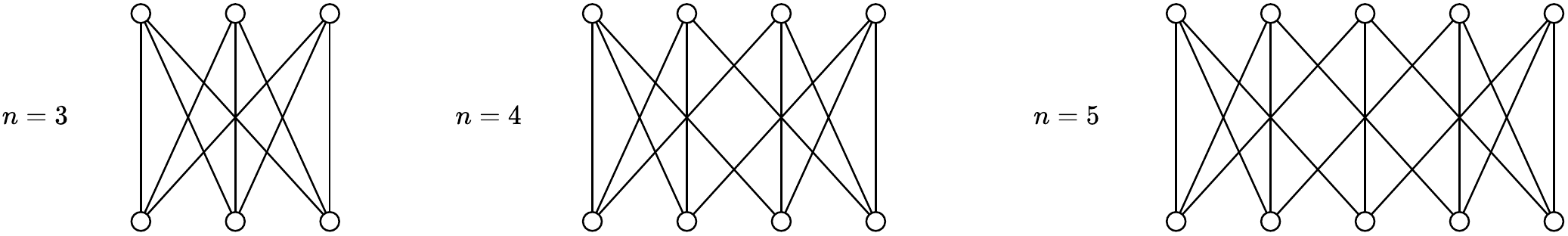}
    \caption{Cubic bipartite graphs with maximum number of perfect matchings.}
    \label{fig_mm}
    \end{figure}

Combining Theorems \ref{the_extr} and \ref{the_extr_matc},
we obtain immediately a spectral characterization for cubic bipartite graphs with maximum number of perfect matchings.

\begin{theorem}\label{the_equ}
Among all connected cubic bipartite graphs on at least $12$ vertices, the graph attains maximum number of perfect matchings
if and only if it has the minimum algebraic connectivity (spectral gap).
\end{theorem}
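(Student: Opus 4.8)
The plan is to deduce the statement directly from the two uniqueness results already in hand, Theorems \ref{the_extr} and \ref{the_extr_matc}. The essential point is that both extremal problems single out the \emph{same} graph $H_{2n}$ as their unique optimizer, so the biconditional is obtained simply by chaining the two characterizations through this common extremal graph; no new structural or spectral work is required.

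First I would fix the parameter. A connected cubic bipartite graph on at least $12$ vertices has $2n$ vertices with $n\geq 6$, which is exactly the range in which both Theorem \ref{the_extr} and Theorem \ref{the_extr_matc} apply. Since a cubic graph has an even number of vertices and the two theorems are each phrased for a fixed order $2n$, I would carry out the argument within the class of connected cubic bipartite graphs on a fixed $2n$ vertices, so that both \emph{maximum number of perfect matchings} and \emph{minimum algebraic connectivity} refer to extrema taken over the same family.

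Next I would establish the two implications. For the forward direction, suppose $G$ attains the maximum number of perfect matchings among connected cubic bipartite graphs on $2n$ vertices; by the uniqueness asserted in Theorem \ref{the_extr_matc} we have $G=H_{2n}$, and then Theorem \ref{the_extr} gives that $H_{2n}$ has the minimum algebraic connectivity, so $G$ does too. For the reverse direction, suppose $G$ has the minimum algebraic connectivity; by the uniqueness in Theorem \ref{the_extr} we again obtain $G=H_{2n}$, whence Theorem \ref{the_extr_matc} shows that $G$ attains the maximum number of perfect matchings. Recalling that for a regular graph the spectral gap coincides with the algebraic connectivity, as noted in the introduction, yields the parenthetical equivalence with the spectral gap at no extra cost.

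There is no genuine obstacle here: all of the substantive content is already packaged into the uniqueness statements of Theorems \ref{the_extr} and \ref{the_extr_matc}, and the theorem is in effect an immediate corollary of their combination. The only care needed is the bookkeeping that both extrema are taken over connected cubic bipartite graphs of the same order $2n$, so that the shared unique extremal graph $H_{2n}$ can legitimately be used to link the two properties in both directions.
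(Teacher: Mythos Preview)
Your proposal is correct and matches the paper's approach exactly: the paper states that the result follows immediately by combining Theorems \ref{the_extr} and \ref{the_extr_matc}, with no further argument given. Your additional care about fixing the order $2n$ and noting the regular-graph identification of the spectral gap with the algebraic connectivity simply makes explicit what the paper leaves implicit.
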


The rest of the paper is organized as follows. In the next section, some properties on the Fiedler vector are obtained,
and the unique connected cubic bipartite graph with minimum algebraic connectivity is determined. Moreover, we provide the proof of Theorem \ref{the_extr}.
The proof of Theorem \ref{the_min_val} is presented in Section \ref{ex_value}.
In the final section, we give equivalent results of Theorems \ref{the_min_val} and \ref{the_extr} for the spectral gap.

\section{Extremal graph with minimum algebraic connectivity}\label{ex_graph}

Let us recall some basic properties for the Laplacian eigenvalues of graphs. Let $G$ be a graph on $n$ vertices with minimum degree $\delta(G)$.
Clearly, 0 is the smallest Laplacian eigenvalue of $G$, and all ones vector $\mathtt{1}$ is the corresponding eigenvector.
Let $\mathtt{x}$ be an eigenvector of $G$ corresponding to the algebraic connectivity $a(G)$.
Such an eigenvector $\mathtt{x}$ is also called the Fiedler vector of $G$.
For any vertex $v\in V(G)$, the entry of $\mathtt{x}$ corresponding to $v$ is denoted by $\mathtt{x}_{v}$.
Note that $\mathtt{x}\bot \mathtt{1}$. According to Courant-Fischer Theorem, one can see that
\begin{eqnarray}\label{eq1}
a(G)=\min_{\mathtt{z}\bot \mathtt{1}}\frac{\mathtt{z}^{t}L(G)\mathtt{z}}{\mathtt{z}^{t}\mathtt{z}}
=\frac{\mathtt{x}^{t}L(G)\mathtt{x}}{\mathtt{x}^{t}\mathtt{x}}
=\frac{1}{\mathtt{x}^{t}\mathtt{x}}\sum_{uv\in E(G)}(\mathtt{x}_{u}-\mathtt{x}_{v})^{2}.
\end{eqnarray}
It is well-known that the algebraic connectivity is not greater than the minimum degree, that is,
$$a(G)\leq \delta(G).$$
Denote by $G^{c}$ the complement of $G$. The Laplacian spectral radius of $G^{c}$ is written as $\mu(G^{c})$.
Obviously, the maximum degree of $G^{c}$ is $\Delta(G^{c})=n-1-\delta(G)$. A famous lower bound for the Laplacian spectral radius is
$$\mu(G^{c})\geq \Delta(G^{c})+1,$$
with equality if and only if $\Delta(G^{c})=n-1$. If $G$ is connected, then $\Delta(G^{c})<n-1$, and so $\mu(G^{c})>\Delta(G^{c})+1=n-\delta(G)$.
Note also that $a(G)$ and $\mu(G^{c})$ satisfy
$$a(G)+\mu(G^{c})=n.$$
Combining the above facts, one can see that
\begin{eqnarray}\label{eq2}
a(G)<\delta(G)
\end{eqnarray}
if $G$ is connected.

Given a graph $G$, let $\{u,v,u',v'\}$ be four distinct vertices in $G$ satisfying the following condition:
$$u\sim v, u'\sim v', u\nsim u', u\nsim v', v\nsim u', v\nsim v'.$$
Clearly, the induced subgraph $G[\{u,v,u',v'\}]$ is isomorphic to $2K_{2}$. Then we say that $\{uv, u'v'\}$ is a pair of independent edges.

\begin{lemma}\label{lem_trans}
  Let $G$ be a connected graph with a pair of independent edges $\{u_{1}v_{1},u_{2}v_{2}\}$.
  Suppose that $\mathtt{x}$ is a Fiedler vector of $G$ and $d_{G}(v_{1})=d_{G}(v_{2})$.
  Let $G'$ be a connected graph obtained from $G$ by deleting edges $\{u_{1}v_{1},u_{2}v_{2}\}$
  and adding edges $\{u_{1}v_{2},u_{2}v_{1}\}$.
  If $\mathtt{x}_{u_{1}}>\mathtt{x}_{u_{2}}$ and $\mathtt{x}_{v_{1}}\leq \mathtt{x}_{v_{2}}$, then $a(G')<a(G)$.
\end{lemma}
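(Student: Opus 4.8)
The plan is to feed the Fiedler vector $\mathtt{x}$ of $G$ into the variational characterization (\ref{eq1}) applied to $G'$. Since $\mathtt{x}$ is a Fiedler vector of $G$ we have $\mathtt{x}\bot \mathtt{1}$, and because every Laplacian annihilates $\mathtt{1}$, the vector $\mathtt{x}$ is an admissible test vector in the Rayleigh quotient of $G'$; as $G'$ is connected, (\ref{eq1}) yields $a(G')\le \mathtt{x}^{t}L(G')\mathtt{x}/(\mathtt{x}^{t}\mathtt{x})$. The first structural observation is that the described operation is a $2$-switch on the four vertices of $G[\{u_{1},v_{1},u_{2},v_{2}\}]\cong 2K_{2}$, and a $2$-switch preserves every vertex degree. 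Hence $D(G')=D(G)$, so $L(G)$ and $L(G')$ agree except in the entries indexed by $\{u_{1},v_{1},u_{2},v_{2}\}$, and I can compare the two quadratic forms edge by edge.

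Next I would evaluate the difference of the two quadratic forms through the edge-sum expression in (\ref{eq1}). Only the swapped edges contribute, so
\begin{equation*}
\mathtt{x}^{t}L(G')\mathtt{x}-\mathtt{x}^{t}L(G)\mathtt{x}
=(\mathtt{x}_{u_{1}}-\mathtt{x}_{v_{2}})^{2}+(\mathtt{x}_{u_{2}}-\mathtt{x}_{v_{1}})^{2}
-(\mathtt{x}_{u_{1}}-\mathtt{x}_{v_{1}})^{2}-(\mathtt{x}_{u_{2}}-\mathtt{x}_{v_{2}})^{2}.
\end{equation*}
Expanding and cancelling the squared terms collapses the right-hand side to $2(\mathtt{x}_{v_{1}}-\mathtt{x}_{v_{2}})(\mathtt{x}_{u_{1}}-\mathtt{x}_{u_{2}})$. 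By hypothesis $\mathtt{x}_{u_{1}}-\mathtt{x}_{u_{2}}>0$ and $\mathtt{x}_{v_{1}}-\mathtt{x}_{v_{2}}\le 0$, so this product is nonpositive. Therefore $\mathtt{x}^{t}L(G')\mathtt{x}\le \mathtt{x}^{t}L(G)\mathtt{x}=a(G)\,\mathtt{x}^{t}\mathtt{x}$, which already gives the weak inequality $a(G')\le a(G)$.

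The main obstacle is upgrading this to a strict inequality, since in the borderline case $\mathtt{x}_{v_{1}}=\mathtt{x}_{v_{2}}$ the quadratic forms coincide and the Rayleigh comparison alone is inconclusive. To handle it uniformly I would argue by contradiction: if $a(G')=a(G)$, then the chain $a(G')\le \mathtt{x}^{t}L(G')\mathtt{x}/(\mathtt{x}^{t}\mathtt{x})\le a(G)=a(G')$ forces $\mathtt{x}$ to attain the minimum of the Rayleigh quotient of $G'$, so $\mathtt{x}$ is itself a Fiedler vector of $G'$ and $L(G')\mathtt{x}=a(G)\mathtt{x}=L(G)\mathtt{x}$. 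Consequently $\bigl(L(G)-L(G')\bigr)\mathtt{x}=\bigl(A(G')-A(G)\bigr)\mathtt{x}=0$, where $A(G')-A(G)$ is supported on the switched pairs. Reading off the component indexed by $v_{1}$ gives $\mathtt{x}_{u_{2}}-\mathtt{x}_{u_{1}}=0$, contradicting $\mathtt{x}_{u_{1}}>\mathtt{x}_{u_{2}}$. Hence equality is impossible and $a(G')<a(G)$, as claimed. I note that the degree preservation of the $2$-switch is what makes $D(G')=D(G)$, so the whole argument rests on comparing adjacency contributions only; the hypothesis $d_{G}(v_{1})=d_{G}(v_{2})$ is used in the intended application to keep $G'$ within the class of cubic bipartite graphs rather than in this inequality itself.
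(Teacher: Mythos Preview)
Your proof is correct, and for the borderline case $\mathtt{x}_{v_{1}}=\mathtt{x}_{v_{2}}$ it takes a genuinely different and cleaner route than the paper. Both arguments coincide for the strict case $\mathtt{x}_{v_{1}}<\mathtt{x}_{v_{2}}$: compute $\mathtt{x}^{t}L(G')\mathtt{x}-\mathtt{x}^{t}L(G)\mathtt{x}=2(\mathtt{x}_{u_{1}}-\mathtt{x}_{u_{2}})(\mathtt{x}_{v_{1}}-\mathtt{x}_{v_{2}})$ and feed $\mathtt{x}$ into the Rayleigh quotient of $G'$. For the equality case, the paper instead constructs an explicit perturbed test vector $\mathtt{z}$, obtained from $\mathtt{x}$ by shifting the entries at $v_{1}$ and $v_{2}$ by $\pm(\mathtt{x}_{u_{2}}-\mathtt{x}_{u_{1}})/(k-a(G))$ with $k=d_{G}(v_{1})=d_{G}(v_{2})$; a page of computation, using the eigenvalue equations at $v_{1},v_{2}$ and the bound $a(G)<k$ from (\ref{eq2}), then shows $\mathtt{z}^{t}L(G')\mathtt{z}/\mathtt{z}^{t}\mathtt{z}<a(G)$.

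Your contradiction argument avoids this computation entirely: if $a(G')=a(G)$ then the squeeze forces $\mathtt{x}$ to be a Fiedler vector of $G'$ as well, and since the $2$-switch preserves all degrees one has $L(G)-L(G')=A(G')-A(G)$, whose $v_{1}$-row applied to $\mathtt{x}$ gives $\mathtt{x}_{u_{2}}-\mathtt{x}_{u_{1}}\neq 0$. This is shorter, and as you correctly observe it never uses the hypothesis $d_{G}(v_{1})=d_{G}(v_{2})$; by contrast, the paper's perturbation genuinely relies on that hypothesis (both to set $k$ uniformly and to invoke $a(G)<k$). So your argument in fact establishes a slightly stronger statement than the one claimed.
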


\begin{proof}
  We may assume that $\mathtt{x}$ is a unit Fiedler vector. If $\mathtt{x}_{v_{1}}<\mathtt{x}_{v_{2}}$, then
  \begin{eqnarray*}
    \mathtt{x}^{t}L(G')\mathtt{x}-\mathtt{x}^{t}L(G)\mathtt{x}
    &=&(\mathtt{x}_{u_{1}}-\mathtt{x}_{v_{2}})^{2}+(\mathtt{x}_{u_{2}}-\mathtt{x}_{v_{1}})^{2}
    -(\mathtt{x}_{u_{1}}-\mathtt{x}_{v_{1}})^{2}-(\mathtt{x}_{u_{2}}-\mathtt{x}_{v_{2}})^{2}\\
    &=&2\mathtt{x}_{u_{1}}\mathtt{x}_{v_{1}}+2\mathtt{x}_{u_{2}}\mathtt{x}_{v_{2}}
    -2\mathtt{x}_{u_{1}}\mathtt{x}_{v_{2}}-2\mathtt{x}_{u_{2}}\mathtt{x}_{v_{1}}\\
    &=&2(\mathtt{x}_{u_{1}}-\mathtt{x}_{u_{2}})(\mathtt{x}_{v_{1}}-\mathtt{x}_{v_{2}})\\
    &<&0,
  \end{eqnarray*}
  where the last inequality holds since $\mathtt{x}_{u_{1}}>\mathtt{x}_{u_{2}}$ and $\mathtt{x}_{v_{1}}<\mathtt{x}_{v_{2}}$.
  According to (\ref{eq1}), it follows that
  $$a(G')\leq \mathtt{x}^{t}L(G')\mathtt{x}<\mathtt{x}^{t}L(G)\mathtt{x}=a(G),$$
  as required.

  In the following, we assume that $\mathtt{x}_{v_{1}}=\mathtt{x}_{v_{2}}$.
  Suppose that $d_{G}(v_{1})=d_{G}(v_{2})=k$. Then the minimum degree of $G$ is at most $k$.
  It follows from (\ref{eq2}) that $a(G)<k$. We construct a new vector $\mathtt{z}$ such that
  \begin{equation*}
    \left\{
      \begin{aligned}
        &\mathtt{z}_{v_{1}}=\mathtt{x}_{v_{1}}+\frac{\mathtt{x}_{u_{2}}-\mathtt{x}_{u_{1}}}{k-a(G)},\\
        &\mathtt{z}_{v_{2}}=\mathtt{x}_{v_{1}}+\frac{\mathtt{x}_{u_{1}}-\mathtt{x}_{u_{2}}}{k-a(G)},\\
        &\mathtt{z}_{w}=\mathtt{x}_{w}~~\text{for}~~w\in V(G)\backslash\{v_{1},v_{2}\}.
      \end{aligned}
    \right.
  \end{equation*}
  It is easy to see that
  $$\mathtt{z}^{t}\mathtt{1}=\mathtt{x}^{t}\mathtt{1}+\frac{\mathtt{x}_{u_{2}}-\mathtt{x}_{u_{1}}}{k-a(G)}
  +\frac{\mathtt{x}_{u_{1}}-\mathtt{x}_{u_{2}}}{k-a(G)}=\mathtt{x}^{t}\mathtt{1},$$
  hence $\mathtt{z}\bot\mathtt{1}$.
  Set $N_{1}=\{w:w\in N_{G}(v_{1})\backslash\{u_{1}\}\}$ and  $N_{2}=\{w:w\in N_{G}(v_{2})\backslash\{u_{2}\}\}$.
  In the graph $G'$, the neighborhood of $v_{1}$ and $v_{2}$ are $N_{1}\cup\{u_{2}\}$ and $N_{2}\cup\{u_{1}\}$, respectively.
  Then we obtain that
  \begin{eqnarray*}
    &&\mathtt{z}^{t}L(G')\mathtt{z}-\mathtt{x}^{t}L(G)\mathtt{x}\\
    &=&\sum_{w\in N_{1}}(\mathtt{z}_{v_{1}}-\mathtt{z}_{w})^{2}+\sum_{w\in N_{2}}(\mathtt{z}_{v_{2}}-\mathtt{z}_{w})^{2}
    -\sum_{w\in N_{1}}(\mathtt{x}_{v_{1}}-\mathtt{x}_{w})^{2}+\sum_{w\in N_{2}}(\mathtt{x}_{v_{2}}-\mathtt{x}_{w})^{2}\\
    &&+(\mathtt{z}_{v_{1}}-\mathtt{z}_{u_{2}})^{2}+(\mathtt{z}_{v_{2}}-\mathtt{z}_{u_{1}})^{2}
    -(\mathtt{x}_{v_{1}}-\mathtt{x}_{u_{1}})^{2}-(\mathtt{x}_{v_{2}}-\mathtt{x}_{u_{2}})^{2}\\
    &=& \sum_{w\in N_{1}}(\mathtt{z}_{v_{1}}-\mathtt{x}_{v_{1}})(\mathtt{z}_{v_{1}}+\mathtt{x}_{v_{1}}-2\mathtt{x}_{w})
    +\sum_{w\in N_{2}}(\mathtt{z}_{v_{2}}-\mathtt{x}_{v_{2}})(\mathtt{z}_{v_{2}}+\mathtt{x}_{v_{2}}-2\mathtt{x}_{w})\\
    && +(\mathtt{z}_{v_{1}}-\mathtt{x}_{v_{2}})(\mathtt{z}_{v_{1}}+\mathtt{x}_{v_{2}}-2\mathtt{x}_{u_{2}})
    +(\mathtt{z}_{v_{2}}-\mathtt{x}_{v_{1}})(\mathtt{z}_{v_{2}}+\mathtt{x}_{v_{1}}-2\mathtt{x}_{u_{1}})\\
    &=& (\mathtt{z}_{v_{1}}-\mathtt{x}_{v_{1}})\left(k(\mathtt{z}_{v_{1}}+\mathtt{x}_{v_{1}})-2\mathtt{x}_{u_{2}}-2\sum_{w\in N_{1}}\mathtt{x}_{w}\right)
    +(\mathtt{z}_{v_{2}}-\mathtt{x}_{v_{1}})\left(k(\mathtt{z}_{v_{2}}+\mathtt{x}_{v_{1}})-2\mathtt{x}_{u_{1}}-2\sum_{w\in N_{2}}\mathtt{x}_{w}\right)\\
    &=& (\mathtt{z}_{v_{1}}-\mathtt{x}_{v_{1}})\left(k(\mathtt{z}_{v_{1}}-\mathtt{z}_{v_{2}})-2\mathtt{x}_{u_{2}}-2\sum_{w\in N_{1}}\mathtt{x}_{w}
    +2\mathtt{x}_{u_{1}}+2\sum_{w\in N_{2}}\mathtt{x}_{w}\right).
  \end{eqnarray*}
  Moreover, according to $L(G)\mathtt{x}=a(G)\mathtt{x}$, we have
  $$a(G)\mathtt{x}_{v_{1}}=k\mathtt{x}_{v_{1}}-\mathtt{x}_{u_{1}}-\sum_{w\in N_{1}}\mathtt{x}_{w}~~\text{and}~~
  a(G)\mathtt{x}_{v_{2}}=k\mathtt{x}_{v_{2}}-\mathtt{x}_{u_{2}}-\sum_{w\in N_{2}}\mathtt{x}_{w}.$$
  Since $\mathtt{x}_{v_{1}}=\mathtt{x}_{v_{2}}$, it follows that
  $$\mathtt{x}_{u_{1}}+\sum_{w\in N_{1}}\mathtt{x}_{w}=\mathtt{x}_{u_{2}}+\sum_{w\in N_{2}}\mathtt{x}_{w}.$$
  Therefore, we obtain that
  \begin{eqnarray*}
    \mathtt{z}^{t}L(G')\mathtt{z}-\mathtt{x}^{t}L(G)\mathtt{x}&=&(\mathtt{z}_{v_{1}}-\mathtt{x}_{v_{1}})\left(k(\mathtt{z}_{v_{1}}-\mathtt{z}_{v_{2}})+4\mathtt{x}_{u_{1}}-4\mathtt{x}_{u_{2}}\right)\\
    &=&\frac{\mathtt{x}_{u_{2}}-\mathtt{x}_{u_{1}}}{k-a(G)}\left(\frac{2k(\mathtt{x}_{u_{2}}-\mathtt{x}_{u_{1}})}{k-a(G)}-4(\mathtt{x}_{u_{2}}-\mathtt{x}_{u_{1}})\right)\\
    &=&\frac{(4a(G)-2k)(\mathtt{x}_{u_{2}}-\mathtt{x}_{u_{1}})^{2}}{(k-a(G))^{2}}\\
    &<&\frac{2a(G)(\mathtt{x}_{u_{2}}-\mathtt{x}_{u_{1}})^{2}}{(k-a(G))^{2}},
  \end{eqnarray*}
  where the last inequality follows from the fact $a(G)<k$.
  Note that
  \begin{eqnarray*}
    \mathtt{z}^{t}\mathtt{z}=\mathtt{x}^{t}\mathtt{x}-2\mathtt{x}_{v_{1}}^{2}+\mathtt{z}_{v_{1}}^{2}+\mathtt{z}_{v_{2}}^{2}
    =1+\frac{2(\mathtt{x}_{u_{2}}-\mathtt{x}_{u_{1}})^{2}}{(k-a(G))^{2}}.
  \end{eqnarray*}
  According to (\ref{eq1}), it follows that
  \begin{eqnarray*}
    a(G')\leq \frac{\mathtt{z}^{t}L(G')\mathtt{z}}{\mathtt{z}^{t}\mathtt{z}}
    <\frac{\mathtt{x}^{t}L(G)\mathtt{x}+\frac{2a(G)(\mathtt{x}_{u_{2}}-\mathtt{x}_{u_{1}})^{2}}{(k-a(G))^{2}}}{1+\frac{2(\mathtt{x}_{u_{2}}-\mathtt{x}_{u_{1}})^{2}}{(k-a(G))^{2}}}
    =\frac{a(G)+\frac{2a(G)(\mathtt{x}_{u_{2}}-\mathtt{x}_{u_{1}})^{2}}{(k-a(G))^{2}}}{1+\frac{2(\mathtt{x}_{u_{2}}-\mathtt{x}_{u_{1}})^{2}}{(k-a(G))^{2}}}
    =a(G).
  \end{eqnarray*}
  This completes the proof.
\end{proof}

The following useful lemma on the Fiedler vector is due to Fiedler \cite{Fiedler1975}.

\begin{lemma}{\rm (\cite{Fiedler1975})}\label{lem_con}
  Let $G$ be a connected graph with a Fiedler vector $\mathtt{x}$. For any $r\geq 0$, let
  $$[V(G)]_{\geq -r}=\{v\in V(G):\mathtt{x}_{v}\geq -r\},~~[V(G)]_{\leq r}=\{v\in V(G):\mathtt{x}_{v}\leq r\}.$$
  Then the subgraphs induced by $[V(G)]_{\geq -r}$ and $[V(G)]_{\leq r}$ are connected.
\end{lemma}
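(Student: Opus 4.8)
The plan is to argue by contradiction, using the variational description (\ref{eq1}). Since the unit Fiedler vector $\mathtt{x}$ attains the minimum of the Rayleigh quotient over $\mathtt{1}^{\perp}$, every nonzero $\mathtt{z}\perp\mathtt{1}$ satisfies $\mathtt{z}^{t}L(G)\mathtt{z}\geq a(G)\,\mathtt{z}^{t}\mathtt{z}$, i.e. $\mathtt{z}^{t}(L(G)-a(G)I)\mathtt{z}\geq 0$ on the hyperplane $\mathtt{1}^{\perp}$. My goal is therefore to manufacture a nonzero $\mathtt{z}\perp\mathtt{1}$ with $\mathtt{z}^{t}(L(G)-a(G)I)\mathtt{z}<0$, the desired contradiction. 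Since $-\mathtt{x}$ is again a Fiedler vector and interchanges the two sets in the statement, it suffices to treat $U:=[V(G)]_{\geq -r}$.

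First I would dispose of the boundary case $r=0$ by reducing it to $r>0$. Writing $W:=V(G)\setminus U$ (if $W=\emptyset$ the claim is trivial), let $\rho:=\min\{|\mathtt{x}_{v}|:\mathtt{x}_{v}<0\}$. For every $0<r<\rho$ one has $[V(G)]_{\geq -r}=[V(G)]_{\geq 0}$, so establishing the claim for all small positive $r$ also settles $r=0$. Assume then $r>0$ and, for contradiction, that $G[U]$ is disconnected; group its components so that $U=U_{1}\cup U_{2}$ with $U_{1},U_{2}$ nonempty and no edge of $G$ joining them. For $i=1,2$ I introduce the test vector $\mathtt{y}^{(i)}$ with $\mathtt{y}^{(i)}_{v}=\mathtt{x}_{v}+r$ for $v\in U_{i}$ and $\mathtt{y}^{(i)}_{v}=0$ otherwise, so that $\mathtt{y}^{(i)}_{v}\geq 0$ on its support while $\mathtt{x}_{w}+r<0$ for $w\in W$.

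The crux is to show $(\mathtt{y}^{(i)})^{t}(L(G)-a(G)I)\mathtt{y}^{(i)}<0$. Expanding $(L(G)\mathtt{y}^{(i)})_{v}$ for $v\in U_{i}$ and substituting the eigen-equation $L(G)\mathtt{x}=a(G)\mathtt{x}$ at $v$ — whose neighbours lie only in $U_{i}$ or in $W$, never in the other components of $U$ — I expect the quadratic form to collapse to
\[(\mathtt{y}^{(i)})^{t}(L(G)-a(G)I)\mathtt{y}^{(i)}=\sum_{v\in U_{i}}(\mathtt{x}_{v}+r)\sum_{\substack{u\sim v\\ u\in W}}(\mathtt{x}_{u}+r)-a(G)\,r\sum_{v\in U_{i}}(\mathtt{x}_{v}+r).\]
Both terms are $\leq 0$: in the double sum each nonnegative factor $\mathtt{x}_{v}+r$ meets a negative factor $\mathtt{x}_{u}+r$, while $a(G)>0$, $r>0$ and $\sigma_{i}:=\sum_{v\in U_{i}}(\mathtt{x}_{v}+r)\geq 0$. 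To force strict negativity I must exclude $\sigma_{i}=0$, i.e. a component sitting entirely at level $-r$; evaluating the eigen-equation at such a vertex gives $a(G)\mathtt{x}_{v}=-a(G)\,r<0$ on the left but a nonnegative right-hand side, a contradiction. Hence $\sigma_{i}>0$ and the second term alone makes the form strictly negative.

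Finally I would combine the two test vectors. Because $\mathtt{y}^{(1)}$ and $\mathtt{y}^{(2)}$ have disjoint supports and no edge joins $U_{1}$ to $U_{2}$, all cross terms vanish, so for $\mathtt{z}=s\,\mathtt{y}^{(1)}+t\,\mathtt{y}^{(2)}$ one gets
\[\mathtt{z}^{t}(L(G)-a(G)I)\mathtt{z}=s^{2}(\mathtt{y}^{(1)})^{t}(L(G)-a(G)I)\mathtt{y}^{(1)}+t^{2}(\mathtt{y}^{(2)})^{t}(L(G)-a(G)I)\mathtt{y}^{(2)}.\]
Taking $(s,t)=(\sigma_{2},-\sigma_{1})\neq(0,0)$ makes $\mathtt{z}\perp\mathtt{1}$ and $\mathtt{z}\neq 0$, while the previous step makes the right-hand side strictly negative — contradicting the variational bound $\mathtt{z}^{t}(L(G)-a(G)I)\mathtt{z}\geq 0$ from (\ref{eq1}). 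I expect the main obstacle to be precisely the strictness bookkeeping of the third paragraph: confirming that the boundary contributions genuinely cancel after applying the eigen-equation, and ruling out the degenerate component lying entirely at $-r$ so that $\sigma_{i}>0$. Once strictness is secured, forming the orthogonal combination and invoking (\ref{eq1}) is routine, and the $[V(G)]_{\leq r}$ case follows by the symmetry $\mathtt{x}\mapsto-\mathtt{x}$.
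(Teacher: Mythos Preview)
The paper does not prove this lemma; it is quoted from Fiedler~\cite{Fiedler1975} without argument. Your proposal therefore cannot be compared against a proof in the paper, but it can be judged on its own, and it is essentially correct and in the spirit of Fiedler's original variational argument.

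A few remarks on points you flagged as potentially delicate. Your key identity
\[
(\mathtt{y}^{(i)})^{t}(L(G)-a(G)I)\mathtt{y}^{(i)}=\sum_{v\in U_{i}}(\mathtt{x}_{v}+r)\sum_{\substack{u\sim v\\ u\in W}}(\mathtt{x}_{u}+r)\;-\;a(G)\,r\,\sigma_{i}
\]
does indeed drop out of the computation: for $v\in U_{i}$ one has $(L\mathtt{y}^{(i)})_{v}=d_{v}(\mathtt{x}_{v}+r)-\sum_{u\sim v,\,u\in U_{i}}(\mathtt{x}_{u}+r)=a(G)\mathtt{x}_{v}+\sum_{u\sim v,\,u\in W}(\mathtt{x}_{u}+r)$ after adding and subtracting the $W$-neighbours and using $L\mathtt{x}=a(G)\mathtt{x}$; multiplying by $\mathtt{x}_{v}+r$ and subtracting $a(G)(\mathtt{x}_{v}+r)^{2}$ gives the displayed expression. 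Your exclusion of $\sigma_{i}=0$ is also sound: if every vertex of $U_{i}$ sits at level $-r$, pick any $v\in U_{i}$; its neighbours lie in $U_{i}\cup W$ and all have value $\le -r$, so $(L\mathtt{x})_{v}=d_{v}(-r)-\sum_{u\sim v}\mathtt{x}_{u}\ge 0$, while $a(G)\mathtt{x}_{v}=-a(G)r<0$ since $G$ is connected and $r>0$. Finally, the cross terms really vanish: for $v\in U_{1}$ every neighbour lies in $U_{1}\cup W$, whence $(L\mathtt{y}^{(2)})_{v}=0$, and disjoint supports kill the $I$-term. The reduction of $r=0$ to small $r>0$ is fine because $\mathtt{x}\perp\mathtt{1}$ forces at least one strictly negative entry, so the gap $\rho$ exists.

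In short: the plan works, and what you call the ``main obstacle'' (the strictness bookkeeping) is handled by exactly the argument you sketch.
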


Let $\mathcal{B}(2n,3)$ be the set of all connected cubic bipartite graphs on $2n$ vertices.
A graph in $\mathcal{B}(2n,3)$ is called extremal if it has the minimum algebraic connectivity.
The aim of this section is to determine the extremal graph in $\mathcal{B}(2n,3)$.
Suppose that $G$ is an extremal graph in $\mathcal{B}(2n,3)$ with bipartition $(U,V)$.
Let $\{uv,u'v'\}$ be a pair of independent edges in $G$ with $\{u,u'\}\subseteq U$ and $\{v,v'\}\subseteq V$.
We define four vertex sets $\hat{N}(u)=N_{G}(u)\backslash\{v\}$, $\hat{N}(u')=N_{G}(u')\backslash\{v'\}$, $\hat{N}(v)=N_{G}(v)\backslash\{u\}$
and $\hat{N}(v')=N_{G}(v')\backslash\{u'\}$.

The following lemmas present some properties on the Fiedler vector of the extremal graph $G$.

\begin{lemma}\label{lem_prop}
Let $G$ be an extremal graph in $\mathcal{B}(2n,3)$. Suppose that $\{uv,u'v'\}$ is a pair of independent edges in $G$ as defined above.
Let $\mathtt{x}$ be a unit Fiedler vector of $G$. If $\mathtt{x}_{u}>\mathtt{x}_{u'}$ and $\mathtt{x}_{v}\leq \mathtt{x}_{v'}$, then the following statements hold.\\
\noindent {\rm (P1)} $G$ contains no cut edge.\\
\noindent {\rm (P2)} $\{uv,u'v'\}$ is an edge cut.\\
\noindent {\rm (P3)} $N_{G}(u)\cap N_{G}(u')=\emptyset$ and $N_{G}(v)\cap N_{G}(v')=\emptyset$.\\
\noindent {\rm (P4)} $\min\{\mathtt{x}_{w}:w\in \hat{N}(u)\}>\mathtt{x}_{v'}\geq \mathtt{x}_{v}>\max\{\mathtt{x}_{w}:w\in \hat{N}(v)\}$.\\
\noindent {\rm (P5)} $\min\{\mathtt{x}_{w}:w\in \hat{N}(v')\}\geq \mathtt{x}_{u}>\mathtt{x}_{u'}\geq \max\{\mathtt{x}_{w}:w\in \hat{N}(u')\}$.
\end{lemma}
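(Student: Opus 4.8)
The plan rests on a single engine that I would invoke for every part. Since $G$ is cubic, every degree equals $3$, so the hypothesis $d_G(v_1)=d_G(v_2)$ of Lemma~\ref{lem_trans} is free. Consequently, whenever $G$ contains a pair of independent edges $\{ab,cd\}$ with $a,c\in U$, $b,d\in V$ admitting a \emph{descending} orientation of the Fiedler values (say $\mathtt{x}_a>\mathtt{x}_c$ and $\mathtt{x}_b\le \mathtt{x}_d$), swapping them produces a cubic bipartite graph $\tilde G$ on $2n$ vertices with $a(\tilde G)<a(G)$. As $G$ is extremal, $\tilde G\notin\mathcal{B}(2n,3)$, and being still cubic and bipartite it must be \emph{disconnected}. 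Thus ``admissible swap $=$ disconnecting swap'' is the contradiction template behind the whole lemma, and the skill is to arrange, in each case, a swap that provably keeps $G$ connected while lowering $a(G)$.

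I would establish (P1) first, because the later parts need both shores of the cut to be connected. Suppose $G$ had a cut edge $ab$ with $G-ab=G_1\sqcup G_2$. Each side carries strictly more edges than a spanning tree (the degree sum is $3|G_i|-1$), hence contains a cycle and a non-bridge edge. Choosing a non-bridge edge $cd\subseteq G_1$ with $c\neq a$ and $d\notin N_G(a)$, the pair $\{ab,cd\}$ is independent, and swapping it relocates the \emph{single} crossing edge: the two new edges are one internal to $G_1$ and one crossing, so $G_1$ stays connected (non-bridge removal) and a lone crossing edge still joins $G_1$ to $G_2$. The swapped graph is therefore connected, and a suitable orientation makes the swap admissible, contradicting extremality. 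The only subtlety here is guaranteeing a non-bridge edge whose Fiedler values realise the descending orientation; this uses that $\mathtt{x}$ is non-constant and $\perp\mathtt{1}$.

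For (P2) I would apply the engine to the given pair $\{uv,u'v'\}$ itself, which is admissible since $\mathtt{x}_u>\mathtt{x}_{u'}$ and $\mathtt{x}_v\le \mathtt{x}_{v'}$. The swapped graph $G'=G-\{uv,u'v'\}+\{uv',u'v\}$ is disconnected, so there is a partition $V(G)=S\sqcup T$ with no crossing $G'$-edge. This forces every $G$-edge crossing $(S,T)$ to lie in $\{uv,u'v'\}$, and forces $u,v'$ to one side and $u',v$ to the other. A short sign check rules out the case where only one of $uv,u'v'$ crosses, so both cross and $\partial(S,T)=\{uv,u'v'\}$, which is (P2); I record $u,v'\in S$ and $u',v\in T$. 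Then (P3) is immediate: the non-cut edges at $u$ keep $\hat N(u)\subseteq S$ while $\hat N(u')\subseteq T$, and $S\cap T=\varnothing$ makes these disjoint; the only other common-neighbour candidates $v,v'$ are excluded because $\{u,v,u',v'\}$ induces $2K_2$. Interchanging the parts gives $N_G(v)\cap N_G(v')=\varnothing$.

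Finally, (P4) and (P5) are four neighbourhood inequalities, each proved by the same routing device. To show, for instance, that every $w\in\hat N(u)$ satisfies $\mathtt{x}_w>\mathtt{x}_{v'}$, I would assume the reverse and pair $uw$ with the cut edge $u'v'$; this pair is independent by (P3) and descending because $\mathtt{x}_u>\mathtt{x}_{u'}$ and $\mathtt{x}_w\le \mathtt{x}_{v'}$. The swap deletes the crossing edge $u'v'$ and inserts the crossing edge $u'w$, leaving two crossing edges whose endpoints route through the shore $G[T]$, which is connected precisely because (P1) makes $\{uv,u'v'\}$ a minimal two-edge cut; hence the swapped graph stays connected, the required contradiction. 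The inequalities for $\hat N(v')$, $\hat N(v)$ and $\hat N(u')$ are handled by the mirror constructions, pairing the offending edge with one of the two cut edges and routing through the opposite connected shore. The hard part will be the bookkeeping: one must check in each case that the specific Fiedler comparison needed to trigger Lemma~\ref{lem_trans} is available, and the genuinely delicate cases are those on the $T$-side together with the non-strict comparison $\mathtt{x}_v\le \mathtt{x}_{v'}$, where the needed inequality cannot be read off directly but must be supplied from the hypotheses and the already-established $S$-side inequalities.
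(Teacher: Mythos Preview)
Your plan for (P2)--(P5) mirrors the paper's proof almost exactly: use Lemma~\ref{lem_trans} to force the swapped graph $G-\{uv,u'v'\}+\{uv',u'v\}$ to be disconnected, read off the two-shore structure $S\ni u,v'$ and $T\ni u',v$, deduce (P3) from the shore separation, and then prove each neighbourhood inequality in (P4)--(P5) by pairing the offending edge with a cut edge and checking that the swap stays connected via a path through the opposite shore. You are also right that the $T$-side inequalities with the non-strict hypothesis $\mathtt{x}_v\le\mathtt{x}_{v'}$ need extra care; the paper glosses these as ``similar.'' The remark that the ``only one edge crosses'' case in (P2) is excluded is also handled in the paper simply by invoking (P1), not by a separate sign check.

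The gap is in (P1). The paper does not use the swap engine here at all: it gives a two-line parity argument. If $e$ were a cut edge, one component of $G-e$ is bipartite with degree sequence $(2,3,\ldots,3)$; writing its bipartition as $(A,B)$ with the degree-$2$ vertex in $A$, the edge-count identity $\sum_{a\in A}d(a)=\sum_{b\in B}d(b)$ gives $3|A|-1=3|B|$, impossible modulo~$3$. Your proposed route---find a non-bridge edge $cd$ in one side of the cut, pair it with the bridge $ab$, and swap---has a genuine hole at the step ``a suitable orientation makes the swap admissible.'' Lemma~\ref{lem_trans} requires $(\mathtt{x}_a-\mathtt{x}_c)$ and $(\mathtt{x}_b-\mathtt{x}_d)$ to have opposite weak signs with at least one strict, and nothing you have written guarantees the existence of a non-bridge edge $cd$ with $d\notin N_G(a)$ realising this; knowing only that $\mathtt{x}$ is non-constant and $\mathtt{x}\perp\mathtt{1}$ does not control the Fiedler values on a particular component relative to the fixed pair $(\mathtt{x}_a,\mathtt{x}_b)$. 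The parity argument sidesteps this entirely and should replace your (P1).
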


\begin{proof}
If there exists a cut edge $e$ in $G$, then a component of $G-e$ has the degree sequence $(2,3,\ldots, 3)$.
Note that such a component is also a bipartite graph. However, the sequence $(2,3,\ldots, 3)$ cannot be the degree sequence of any bipartite graph,
hence there is no cut edge in $G$, and so (P1) holds.

We claim that the graph $G-\{uv,u'v'\}+\{uv',u'v\}$ must be disconnected. Otherwise,
if $G-\{uv,u'v'\}+\{uv',u'v\}$ is connected, then Lemma \ref{lem_trans} shows that
the algebraic connectivity of $G-\{uv,u'v'\}+\{uv',u'v\}$ is less than the algebraic connectivity of $G$, a contradiction.

If $G-\{uv,u'v'\}$ is connected, then the graph $G-\{uv,u'v'\}+\{uv',u'v\}$ is clearly connected, a contradiction.
Therefore, $\{uv,u'v'\}$ is an edge cut. So (P2) holds.

According to (P1) and (P2), there are exactly two components in $G-\{uv,u'v'\}$, say $H_{1}$ and $H_{2}$.
If $H_{1}$ (or $H_{2}$) contains only one vertex of $\{u,u',v,v'\}$, then there is a cut edge in $G$, contradicting (P1).
Thus, $|V(H_{1})\cap \{u,u',v,v'\}|=|V(H_{2})\cap \{u,u',v,v'\}|=2$. If $u$ and $u'$ belong to the same component, then $v$ and $v'$ are included in the other component.
However, in this case, the graph $G-\{uv,u'v'\}+\{uv',u'v\}$ is still connected, a contradiction.
Therefore, without loss of generality, we may assume that $\{u,v'\}\subseteq V(H_{1})$ and $\{u',v\}\subseteq V(H_{2})$.
In view of the above assumption, the local structure of $G$ is exhibited in Figure \ref{fig1}.
It is easy to see that $N_{G}(u)\cap N_{G}(u')=\emptyset$ and $N_{G}(v)\cap N_{G}(v')=\emptyset$, and (P3) follows.

Choose a vertex $w\in \hat{N}(u)$. Clearly, $\{uw,u'v'\}$ is also a pair of independent edges in $G$.
Let $u'Pv$ be a path in $H_{2}$. Consider the graph $G-\{uw,u'v'\}+\{uv',u'w\}$.
In this graph, the vertices $u,u',v',w$ are connected by the path $wu'Pvuv'$.
Hence $G-\{uw,u'v'\}+\{uv',u'w\}$ is connected. If $\mathtt{x}_{w}\leq \mathtt{x}_{v'}$,
then it follows from Lemma \ref{lem_trans} that
the algebraic connectivity of $G-\{uw,u'v'\}+\{uv',u'w\}$ is less than the algebraic connectivity of $G$, a contradiction.
Therefore, we obtain that $\mathtt{x}_{w}>\mathtt{x}_{v'}$, and so $\min\{\mathtt{x}_{w}:w\in \hat{N}(u)\}>\mathtt{x}_{v'}$.
A similar argument, using Lemma \ref{lem_trans}, yields that $\mathtt{x}_{v}>\max\{\mathtt{x}_{w}:w\in \hat{N}(v)\}$,
$\min\{\mathtt{x}_{w}:w\in \hat{N}(v')\}\geq \mathtt{x}_{u}$ and $\mathtt{x}_{u'}\geq \max\{\mathtt{x}_{w}:w\in \hat{N}(u')\}$.
Now statements (P4) and (P5) hold directly.
\end{proof}

\begin{figure}[ht]
  \centering
  \includegraphics[scale=0.5]{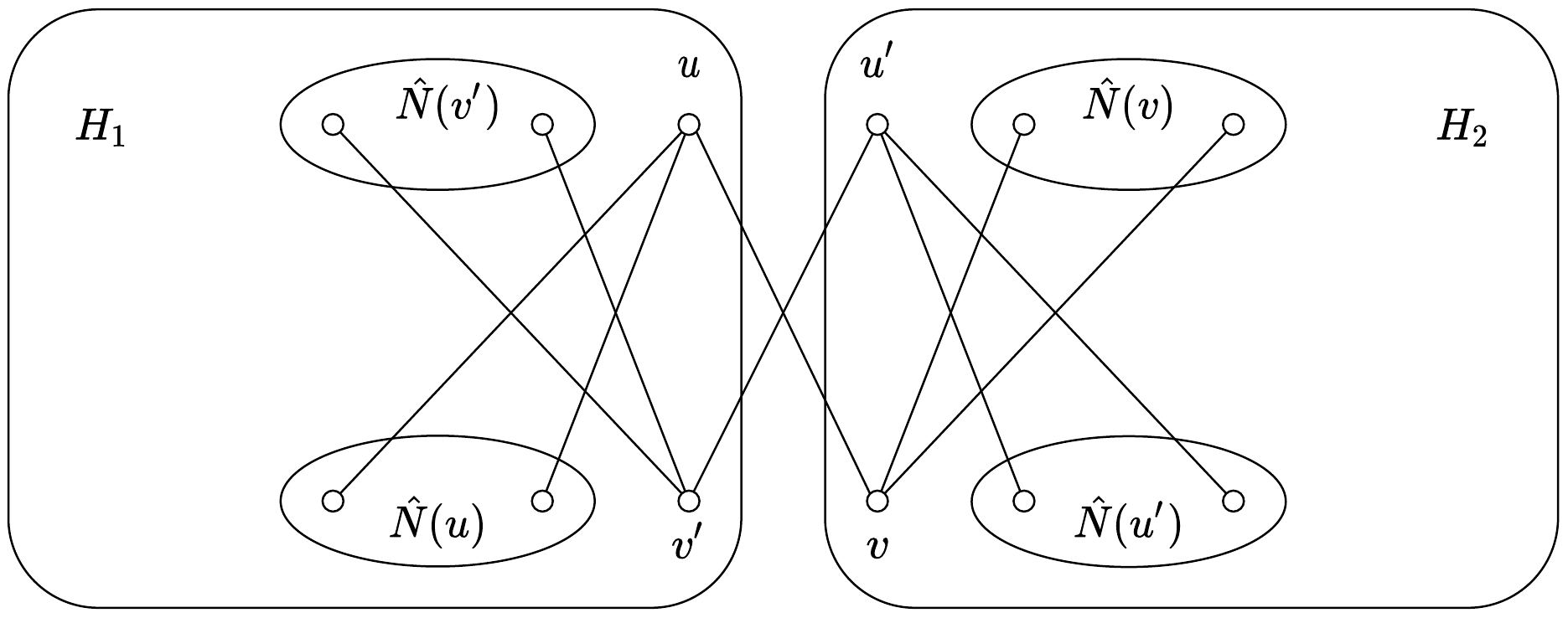}
  \caption{Local structure of the extremal graph.}
  \label{fig1}
  \end{figure}

\begin{lemma}\label{lem_adja}
  Let $G$ be an extremal graph in $\mathcal{B}(2n,3)$. Suppose that $u$ and $v$ are two nonadjacent vertices belonging to different parts of $G$.
  Let $u',v',v''$ be three vertices such that $u\sim v'$, $u\sim v''$ and $u'\sim v$. \\
  (1)  If $\mathtt{x}_{u}>\mathtt{x}_{u'}$ and $\mathtt{x}_{v}\geq \max\{\mathtt{x}_{v'},\mathtt{x}_{v''}\}$,
  then $u'$ is adjacent to both $v'$ and $v''$. \\
  (2)  If $\mathtt{x}_{u}\geq \mathtt{x}_{u'}$ and $\mathtt{x}_{v}>\max\{\mathtt{x}_{v'},\mathtt{x}_{v''}\}$,
  then $u'$ is adjacent to both $v'$ and $v''$.
\end{lemma}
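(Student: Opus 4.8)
The plan is to argue by contradiction and route everything through Lemma \ref{lem_prop} rather than invoking Lemma \ref{lem_trans} directly. The reason is that a direct edge swap $G\mapsto G-\{uv',u'v\}+\{uv,u'v'\}$ might disconnect $G$ (e.g. when $u,v$ fall into the same component of $G-\{uv',u'v\}$), and Lemma \ref{lem_trans} needs the new graph to be connected; Lemma \ref{lem_prop}, already proved for the extremal $G$, absorbs exactly this edge-cut/connectivity bookkeeping into its conclusions (P1)--(P5), so I can apply it as a black box. Throughout, note that the four vertices $u,v',u',v$ are automatically distinct (since $u\nsim v$ forces $v\notin\{v',v''\}$, and $u,u'\in U$, $v,v',v''\in V$), and I shall treat $v'$ and $v''$ separately, so it is enough to prove $u'\sim v'$; the claim $u'\sim v''$ follows by interchanging $v'$ and $v''$, which leaves the hypotheses unchanged.

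For statement (1), I would suppose $u'\nsim v'$. Then $\{uv',u'v\}$ is a pair of independent edges, because $u\sim v'$, $u'\sim v$, while $u\nsim v$ (hypothesis), $u\nsim u'$ and $v\nsim v'$ (same color class), and $u'\nsim v'$ (the assumption). I would apply Lemma \ref{lem_prop} to this pair under the identification $\tilde u=u,\ \tilde v=v',\ \tilde u'=u',\ \tilde v'=v$; the required inequalities $\mathtt{x}_{\tilde u}>\mathtt{x}_{\tilde u'}$ and $\mathtt{x}_{\tilde v}\le\mathtt{x}_{\tilde v'}$ read $\mathtt{x}_{u}>\mathtt{x}_{u'}$ and $\mathtt{x}_{v'}\le\mathtt{x}_{v}$, both of which hold in case (1). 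Conclusion (P4) then gives $\min\{\mathtt{x}_w:w\in N_G(u)\setminus\{v'\}\}>\mathtt{x}_{\tilde v'}=\mathtt{x}_{v}$. Since $v''\in N_G(u)\setminus\{v'\}$, this yields $\mathtt{x}_{v''}>\mathtt{x}_{v}$, contradicting the hypothesis $\mathtt{x}_{v}\ge\mathtt{x}_{v''}$. Hence $u'\sim v'$, and symmetrically $u'\sim v''$.

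For statement (2), I would first observe that if $\mathtt{x}_u>\mathtt{x}_{u'}$ strictly, then the hypotheses of (1) are met (as $\mathtt{x}_v>\max\{\mathtt{x}_{v'},\mathtt{x}_{v''}\}$ certainly gives $\mathtt{x}_v\ge\mathtt{x}_{v'},\mathtt{x}_{v''}$), and we are done by part (1). So the only genuinely new situation is $\mathtt{x}_u=\mathtt{x}_{u'}$, where the strict inequality now sits on the $V$-side. Here I would again suppose $u'\nsim v'$ and apply Lemma \ref{lem_prop} to $\{u'v,uv'\}$, but \emph{with the two color classes interchanged}; this is legitimate because $G$ is cubic bipartite, so both parts consist of degree-$3$ vertices and the entire argument of Lemma \ref{lem_prop} (and of the underlying Lemma \ref{lem_trans}, whose only degree requirement $d_G(v_1)=d_G(v_2)$ is automatic) is symmetric in the two parts. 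Concretely I let $v,v'$ play the roles of $u,u'$ and let $u',u$ play the roles of $v,v'$: the strict condition becomes $\mathtt{x}_v>\mathtt{x}_{v'}$ and the weak condition becomes $\mathtt{x}_{u'}\le\mathtt{x}_u$, both valid. Now conclusion (P5) of this swapped application bounds the neighborhood of the vertex in the $v'$-role, namely $u$, giving $\min\{\mathtt{x}_w:w\in N_G(u)\setminus\{v'\}\}\ge\mathtt{x}_{v}$; as $v''\in N_G(u)\setminus\{v'\}$ this forces $\mathtt{x}_{v''}\ge\mathtt{x}_{v}$, contradicting $\mathtt{x}_v>\mathtt{x}_{v''}$. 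Thus $u'\sim v'$, and symmetrically $u'\sim v''$.

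I expect the main obstacle to be bookkeeping rather than a deep difficulty: one must (i) confirm that $\{uv',u'v\}$ (resp. $\{uv'',u'v\}$) really is an independent edge pair, which hinges on the assumed non-adjacency $u'\nsim v'$; and (ii) align the strict-versus-weak eigenvector inequalities with the correct color class, which is why case (1) uses a direct application of (P4) while the equality subcase of (2) requires the part-interchanged application and (P5). The conceptual point worth flagging explicitly is the first paragraph's remark: the connectivity of the swapped graph, which would otherwise be the delicate step, is never addressed directly and is instead inherited from Lemma \ref{lem_prop}.
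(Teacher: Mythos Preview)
Your proof is correct and follows essentially the same route as the paper: apply Lemma~\ref{lem_prop}(P4) to the pair $\{uv',u'v\}$ for part~(1), and the part-swapped application of Lemma~\ref{lem_prop}(P5) for part~(2). The preliminary case split in~(2) (reducing first to $\mathtt{x}_u=\mathtt{x}_{u'}$ via part~(1)) is unnecessary---the swapped application with~(P5) already handles the full range $\mathtt{x}_u\ge\mathtt{x}_{u'}$---but it does no harm.
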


\begin{proof}
  We prove (1) by contradiction. Suppose that $u'\nsim v'$. Hence $\{uv',u'v\}$ is a pair of independent edges in $G$.
  Note that $\mathtt{x}_{u}>\mathtt{x}_{u'}$ and $\mathtt{x}_{v}\geq \mathtt{x}_{v'}$.
  By Lemma \ref{lem_prop} (P4), we obtain that $\min\{\mathtt{x}_{w}:w\in N_{G}(u)\backslash\{v'\}\}>\mathtt{x}_{v}$.
  Since $v''\in N_{G}(u)\backslash\{v'\}$, it follows that $\mathtt{x}_{v''}>\mathtt{x}_{v}$,
  contradicting the fact that $\mathtt{x}_{v}\geq \max\{\mathtt{x}_{v'},\mathtt{x}_{v''}\}$.
  A similar argument, using Lemma \ref{lem_prop} (P5), yields a proof of (2).
\end{proof}

\begin{lemma}\label{lem_max1}
  Let $G\in \mathcal{B}(2n,3)$ be an extremal graph with the bipartition $(U,V)$.
  If $u^{*}$ is a vertex in $U$ such that $\mathtt{x}_{u^{*}}=\max\{\mathtt{x}_{u}:u\in U\}$,
  then $\min\{\mathtt{x}_{v}:v\in N_{G}(u^{*})\}\geq \max\{\mathtt{x}_{w}:w\in V\backslash N_{G}(u^{*})\}$.
\end{lemma}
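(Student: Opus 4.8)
The plan is to argue by contradiction using the edge-switching inequality of Lemma \ref{lem_trans}. Suppose the conclusion fails, so there is a neighbour $v\in N_{G}(u^{*})$ and a non-neighbour $w\in V\setminus N_{G}(u^{*})$ with $\mathtt{x}_{v}<\mathtt{x}_{w}$. Since $w\nsim u^{*}$, the three neighbours of $w$ lie in $U$; I first note that at least one of them, say $s$, is non-adjacent to $v$. Indeed, if all three neighbours of $w$ were adjacent to $v$, then together with $u^{*}$ (adjacent to $v$ but not to $w$) the vertex $v$ would have at least four distinct neighbours, contradicting that $G$ is cubic. Thus $\{sw,u^{*}v\}$ is a pair of independent edges.

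The key device is to feed this pair into Lemma \ref{lem_trans} with the orientation that puts the two $V$-vertices in the ``strict'' slot and the two $U$-vertices in the ``weak'' slot. Concretely, I would take $u_{1}=w,\ v_{1}=s,\ u_{2}=v,\ v_{2}=u^{*}$; because $G$ is cubic the hypothesis $d_{G}(v_{1})=d_{G}(v_{2})$ is automatic, so this labelling is admissible even though $s,u^{*}$ both lie in $U$. The two required inequalities then read $\mathtt{x}_{u_{1}}=\mathtt{x}_{w}>\mathtt{x}_{v}=\mathtt{x}_{u_{2}}$, which is exactly the standing hypothesis, and $\mathtt{x}_{v_{1}}=\mathtt{x}_{s}\le\mathtt{x}_{u^{*}}=\mathtt{x}_{v_{2}}$, which holds automatically since $u^{*}$ maximises $\mathtt{x}$ on $U$. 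Hence, writing $G'=G-\{sw,u^{*}v\}+\{u^{*}w,sv\}$, Lemma \ref{lem_trans} gives $a(G')<a(G)$ \emph{provided $G'$ is connected}; as $G'$ is again cubic and bipartite, this contradicts the extremality of $G$. The whole argument therefore reduces to securing connectivity of the switched graph, and this is the main obstacle. Note that this orientation is what makes the proof go through: it lets the robust strict inequality $\mathtt{x}_{w}>\mathtt{x}_{v}$ sit in the strict slot, so I never need the (possibly false) strict inequality $\mathtt{x}_{u^{*}}>\mathtt{x}_{s}$.

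To handle the obstacle I would analyse when $G'$ fails to be connected. Since cubic bipartite graphs have no cut edge (Lemma \ref{lem_prop}(P1)), deleting $\{sw,u^{*}v\}$ leaves at most two components, and if it disconnects $G$ then both edges cross the resulting cut. Comparing the two ways the four endpoints can split, the added edges $u^{*}w$ and $sv$ reconnect the graph in every case except when the cut separates $\{s,v\}$ from $\{u^{*},w\}$. In that remaining bad case the other two neighbours $s',s''$ of $w$ lie on the same side as $w$ (their edges to $w$ are not cut edges), and each must be non-adjacent to $v$, for otherwise the edge from $v$ to $s'$ would be a third edge crossing a $2$-edge-cut, which is impossible. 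Re-routing the switch through $s'$, namely deleting $\{ws',u^{*}v\}$ and adding $\{u^{*}w,s'v\}$, now \emph{does} produce a connected graph: the retained cut edge $sw$ still joins the two sides while the new edge $s'v$ reconnects whatever piece the deletion of $ws'$ might have severed. Applying Lemma \ref{lem_trans} to this re-routed pair (again with the $V$-vertices in the strict slot) yields a cubic bipartite graph of strictly smaller algebraic connectivity, the desired contradiction. I expect the delicate points to be precisely this connectivity bookkeeping — verifying that re-routing through $s'$ always reconnects — together with pinning down the orientation of Lemma \ref{lem_trans} so that $\mathtt{x}_{s}\le\mathtt{x}_{u^{*}}$ occupies the non-strict slot.
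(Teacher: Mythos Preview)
Your argument is correct, but it takes a different and longer route than the paper. The paper does not invoke Lemma~\ref{lem_trans} directly; instead it applies Lemma~\ref{lem_prop} to the independent pair $\{u^{*}v,\,u'w\}$ (where $u'$ is a neighbour of $w$ not adjacent to $v$), with the strict inequality placed on the $V$-side. Property (P4) then forces $\min\{\mathtt{x}_{t}:t\in N_{G}(w)\setminus\{u'\}\}>\mathtt{x}_{u^{*}}$, which immediately contradicts the maximality of $u^{*}$ on $U$. All of the connectivity bookkeeping you carry out by hand is already absorbed into the proof of Lemma~\ref{lem_prop} (specifically (P2)--(P5)), so the paper's proof is only a few lines.

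Your approach bypasses Lemma~\ref{lem_prop} and reproves the needed piece of it in situ. The connectivity analysis you sketch is sound: in the bad split $\{s,v\}\,\vert\,\{u^{*},w\}$, the other neighbours $s',s''$ of $w$ lie on the $\{u^{*},w\}$-side and are non-adjacent to $v$ (else a third edge would cross a $2$-edge cut), and after re-routing through $s'$ the graph $G''=G-\{ws',u^{*}v\}+\{u^{*}w,s'v\}$ is connected because the $A$-side is untouched, while the $B$-side loses $ws'$ and hence breaks into at most two pieces, one containing $w$ (rejoined to $A$ via the retained edge $sw$) and one containing $s'$ (rejoined to $A$ via the new edge $s'v$). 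Your orientation of Lemma~\ref{lem_trans}, placing the strict inequality $\mathtt{x}_{w}>\mathtt{x}_{v}$ in the $u$-slot so that only $\mathtt{x}_{s'}\le\mathtt{x}_{u^{*}}$ is needed, is exactly right. What your approach buys is independence from Lemma~\ref{lem_prop}; what the paper's approach buys is brevity, since that lemma has already done the work.
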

\begin{proof}
  Suppose that $N_{G}(u^{*})=\{v_{1},v_{2},v_{3}\}$ and $\mathtt{x}_{v_{1}}\geq \mathtt{x}_{v_{2}}\geq \mathtt{x}_{v_{3}}$.
  Let $v_{4}$ be a vertex in $V\backslash N_{G}(u^{*})$ with $\mathtt{x}_{v_{4}}=\max\{\mathtt{x}_{w}:w\in V\backslash N_{G}(u^{*})\}$.
  We only need to show that $\mathtt{x}_{v_{3}}\geq\mathtt{x}_{v_{4}}$. We prove it by contradiction. Suppose that $\mathtt{x}_{v_{3}}<\mathtt{x}_{v_{4}}$.
  Since the degree of $v_{3}$ is 3, then there is a vertex $u'\in U$ such that $u'\sim v_{4}$ and $u'\nsim v_{3}$.
  Clearly, $\{u^{*}v_{3},u'v_{4}\}$ is a pair of independent edges in $G$.
  Since $\mathtt{x}_{v_{3}}<\mathtt{x}_{v_{4}}$ and $\mathtt{x}_{u^{*}}\geq \mathtt{x}_{u'}$,
  by Lemma \ref{lem_prop}(P4), we have $\min\{\mathtt{x}_{w}:w\in N_{G}(v_{4})\backslash\{u'\}\}>\mathtt{x}_{u^{*}}$.
  which contradicts the maximality of $u^{*}$. Hence the result follows.
\end{proof}

Similar to the proof of Lemma \ref{lem_max1}, we can obtain the following result.

\begin{lemma}\label{lem_max2}
  Let $G\in \mathcal{B}(2n,3)$ be an extremal graph with the bipartition $(U,V)$.
  If $v^{*}$ is a vertex in $V$ such that $\mathtt{x}_{v^{*}}=\max\{\mathtt{x}_{v}:v\in V\}$,
  then $\min\{\mathtt{x}_{u}:u\in N_{G}(v^{*})\}\geq \max\{\mathtt{x}_{w}:w\in V\backslash N_{G}(v^{*})\}$.
\end{lemma}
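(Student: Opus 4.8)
The plan is to run the argument of Lemma~\ref{lem_max1} as its mirror image under interchanging the two color classes $U$ and $V$; since $G$ is bipartite, the structural lemmas used there (Lemmas~\ref{lem_trans} and~\ref{lem_prop}) are symmetric under this swap, so the whole scheme transfers. (For the statement to make sense the right-hand maximum should be read over $U\backslash N_{G}(v^{*})$, the exact $U$-analogue of the set $V\backslash N_{G}(u^{*})$ occurring in Lemma~\ref{lem_max1}.) First I would fix notation: write $N_{G}(v^{*})=\{u_{1},u_{2},u_{3}\}$, ordered so that $\mathtt{x}_{u_{1}}\geq\mathtt{x}_{u_{2}}\geq\mathtt{x}_{u_{3}}$, and choose $u_{4}\in U\backslash N_{G}(v^{*})$ attaining $\max\{\mathtt{x}_{w}:w\in U\backslash N_{G}(v^{*})\}$. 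It then suffices to show $\mathtt{x}_{u_{3}}\geq\mathtt{x}_{u_{4}}$, and I would argue by contradiction, assuming $\mathtt{x}_{u_{3}}<\mathtt{x}_{u_{4}}$.

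The key combinatorial step is to produce a neighbor $v_{0}$ of $u_{4}$ with $v_{0}\nsim u_{3}$. This is a counting argument: $u_{3}$ has degree $3$ and already spends one edge on $v^{*}$, so it has only two neighbors besides $v^{*}$; meanwhile $u_{4}$ has three neighbors in $V$, none of which equals $v^{*}$ because $v^{*}\notin N_{G}(u_{4})$; hence at least one neighbor $v_{0}$ of $u_{4}$ is non-adjacent to $u_{3}$. Next I would check that $\{v^{*}u_{3},u_{4}v_{0}\}$ is a pair of independent edges: the four vertices are distinct, the two edges are vertex-disjoint, $v^{*}\nsim v_{0}$ and $u_{3}\nsim u_{4}$ because $G$ is bipartite, $v^{*}\nsim u_{4}$ since $u_{4}\notin N_{G}(v^{*})$, and $u_{3}\nsim v_{0}$ by the choice of $v_{0}$, so the induced subgraph is $2K_{2}$.

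Finally I would apply Lemma~\ref{lem_prop} to this pair. Taking $u=u_{4}$, $u'=u_{3}$, $v=v_{0}$, and letting $v^{*}$ play the role of the fourth vertex $v'$, the hypotheses $\mathtt{x}_{u_{4}}>\mathtt{x}_{u_{3}}$ and $\mathtt{x}_{v_{0}}\leq\mathtt{x}_{v^{*}}$ both hold, the latter by the maximality of $\mathtt{x}_{v^{*}}$ over $V$. Part (P4) then yields $\min\{\mathtt{x}_{w}:w\in N_{G}(u_{4})\backslash\{v_{0}\}\}>\mathtt{x}_{v^{*}}$, so $u_{4}$ has a neighbor in $V$ whose entry strictly exceeds $\mathtt{x}_{v^{*}}$, contradicting the choice of $v^{*}$ as a maximizer over $V$. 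This contradiction forces $\mathtt{x}_{u_{3}}\geq\mathtt{x}_{u_{4}}$, proving the lemma. I expect the only delicate point to be the bookkeeping of strict versus non-strict inequalities when matching the pair to Lemma~\ref{lem_prop}: one must place $v^{*}$ in the ``$v'$'' slot rather than the ``$v$'' slot, so that the strict inequality falls among the two $U$-vertices, which is precisely the native configuration (P4) requires; with that assignment fixed, the remaining verifications are a routine transcription of the proof of Lemma~\ref{lem_max1}.
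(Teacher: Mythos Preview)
Your proposal is correct and is precisely the mirror of the paper's proof of Lemma~\ref{lem_max1}, which is exactly what the paper intends (it states only ``Similar to the proof of Lemma~\ref{lem_max1}, we can obtain the following result''). Your observation that the right-hand side should read $U\backslash N_{G}(v^{*})$ rather than $V\backslash N_{G}(v^{*})$ is also correct; this is a typo in the statement, and your reading is the only one consistent with the intended symmetry and with how the lemma is used thereafter.
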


In the following, we determine the structure of the extremal graph $G\in \mathcal{B}(2n,3)$.
For $n\leq 5$, it is easy to determine the extremal graphs by a directly computation.
As mentioned in Section 1, the extremal graphs for small $n\in \{3,4,5\}$ are presented in Figure \ref{fig_ma}.
Next we only need to consider the case $n\geq 6$, that is, there are at least 12 vertices in $G$.
Suppose that the bipartition of $G$ is $(U,V)$,
where $U=\{u_{1},u_{2},\ldots, u_{n}\}$ and $V=\{v_{1},v_{2},\ldots, v_{n}\}$.

Let $\mathtt{x}$ be a unit Fiedler vector of $G$. According to Lemmas \ref{lem_max1} and \ref{lem_max2},
after a relabeling of the vertices of $G$, we may assume that the vertices satisfying:
\begin{itemize}
  \item[(1)] $\mathtt{x}_{u_{1}}\geq \mathtt{x}_{u_{2}}\geq \cdots \geq \mathtt{x}_{u_{n}}$,
  \item[(2)] $\mathtt{x}_{v_{1}}\geq \mathtt{x}_{v_{2}}\geq \cdots \geq \mathtt{x}_{v_{n}}$,
  \item[(3)] $N_{G}(u_{1})=\{v_{1},v_{2},v_{3}\}$,
  \item[(4)] $N_{G}(v_{1})=\{u_{1},u_{2},u_{3}\}$.
\end{itemize}

Let $S$ be a vertex subset of the extremal graph $G$.
We denote by $\mathtt{x}[S]$ the sum of the entries of the Fiedler vector $\mathtt{x}$ corresponding to the vertices in $S$, that is,
$$\mathtt{x}[S]=\sum_{w\in S}\mathtt{x}_{w}.$$
The following basic property on the Fiedler vector $\mathtt{x}$ is useful.

\begin{lemma}\label{lem_neig}
  Let $u$ and $v$ be any two vertices in $G$. Then\\
  \noindent {\rm (i)} $\mathtt{x}_{u}=\mathtt{x}_{v}$ if and only if $\mathtt{x}[N_{G}(u)]=\mathtt{x}[N_{G}(v)]$,\\
  \noindent {\rm (ii)} $\mathtt{x}_{u}>\mathtt{x}_{v}$ if and only if $\mathtt{x}[N_{G}(u)]>\mathtt{x}[N_{G}(v)]$.
\end{lemma}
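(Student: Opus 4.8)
The plan is to exploit the eigenvalue equation $L(G)\mathtt{x}=a(G)\mathtt{x}$ coordinate by coordinate, using crucially that $G$ is cubic so that every vertex has degree exactly $3$. Reading off the row of $L(G)=D(G)-A(G)$ indexed by an arbitrary vertex $w$, I would write
\begin{eqnarray*}
a(G)\mathtt{x}_{w}=\big(L(G)\mathtt{x}\big)_{w}=3\mathtt{x}_{w}-\sum_{z\in N_{G}(w)}\mathtt{x}_{z}=3\mathtt{x}_{w}-\mathtt{x}[N_{G}(w)].
\end{eqnarray*}
Rearranging gives the single clean identity
\begin{eqnarray*}
\mathtt{x}[N_{G}(w)]=\big(3-a(G)\big)\mathtt{x}_{w},
\end{eqnarray*}
valid for \emph{every} vertex $w$. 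This is the whole engine of the proof: it converts a statement about neighborhood sums into a statement about vertex values, up to the common multiplicative factor $3-a(G)$.

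Next I would apply this identity to the two vertices $u$ and $v$ and subtract, obtaining
\begin{eqnarray*}
\mathtt{x}[N_{G}(u)]-\mathtt{x}[N_{G}(v)]=\big(3-a(G)\big)\,(\mathtt{x}_{u}-\mathtt{x}_{v}).
\end{eqnarray*}
The only remaining point is the sign of the scalar $3-a(G)$. Since $G$ is connected with $\delta(G)=3$, inequality (\ref{eq2}) gives $a(G)<3$, so $3-a(G)>0$. Hence the sign of $\mathtt{x}[N_{G}(u)]-\mathtt{x}[N_{G}(v)]$ coincides with the sign of $\mathtt{x}_{u}-\mathtt{x}_{v}$, and in particular one difference vanishes precisely when the other does.

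From this, both conclusions are immediate: part~(i) follows by taking the case of equality (the right-hand factor forces $\mathtt{x}[N_{G}(u)]=\mathtt{x}[N_{G}(v)]\iff\mathtt{x}_{u}=\mathtt{x}_{v}$), and part~(ii) follows by taking the strict case with the positive factor preserving the direction of the inequality. I do not anticipate a genuine obstacle here; the only thing to be careful about is invoking (\ref{eq2}) to guarantee $3-a(G)>0$, since without the strictness $a(G)<3$ the equivalences in~(ii) could degenerate. Everything else is a one-line consequence of the regularity of $G$ and the Fiedler eigenvalue equation.
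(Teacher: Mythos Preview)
Your proposal is correct and follows essentially the same approach as the paper: both derive the identity $(3-a(G))\mathtt{x}_{w}=\mathtt{x}[N_{G}(w)]$ from the eigenvalue equation and then invoke (\ref{eq2}) to ensure $3-a(G)>0$, from which (i) and (ii) follow immediately.
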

\begin{proof}
  Note that $a(G)\mathtt{x}=L(G)\mathtt{x}.$ It follows that
  $$a(G)\mathtt{x}_{u}=3\mathtt{x}_{u}-\sum_{w\in N_{G}(u)}\mathtt{x}_{w},$$
  that is,
  $$(3-a(G))\mathtt{x}_{u}=\sum_{w\in N_{G}(v)}\mathtt{x}_{w}=\mathtt{x}[N_{G}(u)].$$
  Similarly, we can obtain
  $$(3-a(G))\mathtt{x}_{v}=\mathtt{x}[N_{G}(v)].$$
  According to (\ref{eq2}), we have $3-a(G)>0$, and hence the result follows immediately.
\end{proof}

In order to determine the structure of the extremal graph $G$, we need to establish the adjacency rule of the first six vertices $\{u_{1},u_{2},u_{3},v_{1},v_{2},v_{3}\}$.

\begin{lemma}\label{lem_pos}
  $\mathtt{x}_{u_{1}}>0$ and $\mathtt{x}_{v_{1}}>0$.
\end{lemma}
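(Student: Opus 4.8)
The plan is to use the eigenvalue relation established in the proof of Lemma \ref{lem_neig}, namely $(3-a(G))\mathtt{x}_{w}=\mathtt{x}[N_{G}(w)]$ for every vertex $w$, together with the orthogonality $\mathtt{x}\bot\mathtt{1}$, to run a sign-propagation argument across the bipartition. First I would record the two facts that drive the contradiction: by (\ref{eq2}) we have $a(G)<\delta(G)=3$, so the factor $3-a(G)$ is strictly positive; and since $\mathtt{x}$ is a nonzero vector orthogonal to $\mathtt{1}$, its entries sum to zero, so $\mathtt{x}$ cannot be entirely nonpositive unless it is the zero vector.

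Next I would prove $\mathtt{x}_{v_{1}}>0$ by contradiction. Assume $\mathtt{x}_{v_{1}}\leq 0$. Because $\mathtt{x}_{v_{1}}=\max\{\mathtt{x}_{v}:v\in V\}$ under the labeling convention, this forces $\mathtt{x}_{v}\leq 0$ for every $v\in V$. Now for an arbitrary $u\in U$, bipartiteness gives $N_{G}(u)\subseteq V$, so $\mathtt{x}[N_{G}(u)]\leq 0$; applying $(3-a(G))\mathtt{x}_{u}=\mathtt{x}[N_{G}(u)]$ and dividing by the positive quantity $3-a(G)$ yields $\mathtt{x}_{u}\leq 0$. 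Hence every entry of $\mathtt{x}$ is nonpositive. Combined with $\sum_{w}\mathtt{x}_{w}=0$, this forces $\mathtt{x}=\mathtt{0}$, contradicting that $\mathtt{x}$ is a Fiedler vector. Therefore $\mathtt{x}_{v_{1}}>0$.

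Finally, the statement $\mathtt{x}_{u_{1}}>0$ follows by the identical argument with the roles of $U$ and $V$ interchanged: if $\mathtt{x}_{u_{1}}\leq 0$ then all of $U$ is nonpositive, the relation propagates nonpositivity to $V$ (since $N_{G}(v)\subseteq U$ for every $v\in V$), and the zero-sum condition again forces $\mathtt{x}=\mathtt{0}$, a contradiction.

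As for difficulty, I do not expect a genuine obstacle here; the only point requiring care is making the propagation from one part to the other rigorous, which rests entirely on two earlier facts---the eigenvalue identity of Lemma \ref{lem_neig} and the strict positivity of $3-a(G)$ guaranteed by (\ref{eq2}). The bipartite structure is what makes the argument clean, since it ensures that each vertex's neighborhood lies wholly in the opposite part, allowing the sign to be transported back and forth between $U$ and $V$.
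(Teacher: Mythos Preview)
Your proposal is correct and follows essentially the same approach as the paper: both arguments use the identity $(3-a(G))\mathtt{x}_{w}=\mathtt{x}[N_{G}(w)]$ together with $3-a(G)>0$ to propagate nonpositivity across the bipartition, and then contradict $\mathtt{x}\bot\mathtt{1}$ with $\mathtt{x}\neq 0$. The only cosmetic difference is that the paper assumes $\mathtt{x}_{u_{1}}\leq 0$ first and uses the known neighborhood $N_{G}(v_{1})=\{u_{1},u_{2},u_{3}\}$ to deduce $\mathtt{x}_{v_{1}}\leq 0$ directly, whereas you start from $\mathtt{x}_{v_{1}}\leq 0$ and propagate to all of $U$ via the eigenvalue relation; the substance is the same.
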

\begin{proof}
  If $\mathtt{x}_{u_{1}}\leq 0$, then
  $$(3-a(G))\mathtt{x}_{v_{1}}=\mathtt{x}[N_{G}(v_{1})]=\mathtt{x}_{u_{1}}+\mathtt{x}_{u_{2}}+\mathtt{x}_{u_{3}}\leq 3\mathtt{x}_{u_{1}}\leq 0.$$
  Since $a(G)<3$, it follows that $\mathtt{x}_{v_{1}}\leq 0$.
  This implies that the vector $\mathtt{x}$ is non-positive, which contradicts the definition of the Fiedler vector.
  It follows that $\mathtt{x}_{u_{1}}>0$. Similarly, $\mathtt{x}_{v_{1}}>0$ since $(3-a(G))\mathtt{x}_{u_{1}}\leq 3\mathtt{x}_{v_{1}}$.
\end{proof}

\begin{lemma}\label{lem_neq}
  $\mathtt{x}_{u_{1}}\neq \mathtt{x}_{u_{3}}$ and $\mathtt{x}_{v_{1}}\neq \mathtt{x}_{v_{3}}$.
\end{lemma}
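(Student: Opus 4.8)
The plan is to argue by contradiction. Suppose $\mathtt{x}_{u_1}=\mathtt{x}_{u_3}$. Since $\mathtt{x}_{u_1}\ge \mathtt{x}_{u_2}\ge \mathtt{x}_{u_3}$, this forces $\mathtt{x}_{u_1}=\mathtt{x}_{u_2}=\mathtt{x}_{u_3}$, so the three largest $U$-entries coincide. By Lemma \ref{lem_neig}(i) the three neighbourhood sums then agree: $\mathtt{x}[N_G(u_1)]=\mathtt{x}[N_G(u_2)]=\mathtt{x}[N_G(u_3)]$. The goal is to show that this equality is so rigid that $u_1,u_2,u_3$ must share the common neighbourhood $\{v_1,v_2,v_3\}$, which together with $N_G(v_1)=\{u_1,u_2,u_3\}$ produces a copy of $K_{3,3}$ that is a connected component of $G$; as $G$ is connected with $2n\ge 12>6$ vertices, this is impossible.

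To make the rigidity precise I would count the edges between $\{u_1,u_2,u_3\}$ and $V$, weighted by $\mathtt{x}$. Writing $m_v=|N_G(v)\cap\{u_1,u_2,u_3\}|$, summing the three equal neighbourhood sums gives $\sum_{v\in V} m_v\mathtt{x}_v = 3\,\mathtt{x}[N_G(u_1)] = 3(\mathtt{x}_{v_1}+\mathtt{x}_{v_2}+\mathtt{x}_{v_3})$, while $\sum_{v} m_v = 9$. Because $N_G(v_1)=\{u_1,u_2,u_3\}$ we have $m_{v_1}=3$, so after cancelling the $v_1$-terms we obtain $\sum_{v\neq v_1} m_v\mathtt{x}_v = 3\mathtt{x}_{v_2}+3\mathtt{x}_{v_3}$ with $\sum_{v\neq v_1}m_v=6$ and $0\le m_v\le 3$. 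By the monotone labelling $\mathtt{x}_{v_1}\ge\cdots\ge\mathtt{x}_{v_n}$ (guaranteed via Lemmas \ref{lem_max1} and \ref{lem_max2}) we have $\mathtt{x}_{v_2}\ge\mathtt{x}_{v_3}\ge \mathtt{x}_v$ for every $v\notin\{v_1,v_2,v_3\}$, so the value $3\mathtt{x}_{v_2}+3\mathtt{x}_{v_3}$ is exactly the maximum of $\sum m_v\mathtt{x}_v$ over all admissible weights. A short extremality argument (shifting one unit of weight from a vertex of value $<\mathtt{x}_{v_3}$ up to $v_2$ or $v_3$ would strictly increase the sum) then shows that attaining this maximum forces $m_v=0$ whenever $\mathtt{x}_v<\mathtt{x}_{v_3}$; that is, every edge from $\{u_1,u_2,u_3\}$ lands in $W:=\{v:\mathtt{x}_v\ge\mathtt{x}_{v_3}\}$.

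If $\mathtt{x}_{v_3}>\mathtt{x}_{v_4}$ (the generic case) then $W=\{v_1,v_2,v_3\}$, so $N_G(u_1)=N_G(u_2)=N_G(u_3)=\{v_1,v_2,v_3\}$ and, by degree, $v_1,v_2,v_3$ in turn have all their neighbours in $\{u_1,u_2,u_3\}$; this is the promised $K_{3,3}$ component and yields the contradiction. The statement $\mathtt{x}_{v_1}\neq\mathtt{x}_{v_3}$ follows by the verbatim argument with the roles of $U$ and $V$ exchanged, using condition (4) and Lemma \ref{lem_max2} in place of condition (3) and Lemma \ref{lem_max1}.

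The main obstacle is the degenerate case of a tie $\mathtt{x}_{v_3}=\mathtt{x}_{v_4}$ at the cut level, where $|W|>3$ and the weighted count no longer pins the common neighbourhood down to $\{v_1,v_2,v_3\}$: the six edges may be spread across the tied block, and $\{u_1,u_2,u_3\}\cup W$ need not be a component, since the tied $v$'s can send edges to $U\setminus\{u_1,u_2,u_3\}$. To handle this I would exploit that the tied vertices $v_3,v_4,\dots$ also have equal neighbourhood sums by Lemma \ref{lem_neig}(i), and run a second weighted count, now on the $V$-side across the tied block, to force further coincidences of neighbourhoods; alternatively, one can try to locate a pair of independent edges inside $\{u_1,u_2,u_3\}\cup W$ whose endpoints satisfy the Fiedler ordering required by Lemma \ref{lem_prop} or Lemma \ref{lem_adja}, thereby either contradicting extremality or again collapsing the block to a $K_{3,3}$. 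Making this tie-breaking fully rigorous, while controlling the freedom both in how the six edges distribute and in how labels are assigned within a tied block, is where I expect the real work to lie.
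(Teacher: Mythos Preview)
Your weighted-count argument for the case $\mathtt{x}_{v_3}>\mathtt{x}_{v_4}$ is correct and cleaner than anything in the paper: it forces $m_{v_2}=m_{v_3}=3$, so $\{u_1,u_2,u_3\}\cup\{v_1,v_2,v_3\}$ spans a $K_{3,3}$ component, impossible for $2n\ge 12$. But as you yourself concede, the tie case $\mathtt{x}_{v_3}=\mathtt{x}_{v_4}$ is left open, and that is where essentially all the content lies. Neither of your proposed fixes is carried out: a ``second weighted count on the tied block'' meets the same obstruction one level down, and ``locating a pair of independent edges satisfying Lemma~\ref{lem_prop} or~\ref{lem_adja}'' is exactly what the paper does---but making it work requires a substantial case analysis that you have not supplied. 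As written, the proposal is a plan, not a proof.

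The paper abandons counting altogether. It first shows directly from the eigen-equation that $\mathtt{x}_{v_1}>\mathtt{x}_{v_3}$ and $\mathtt{x}_{u_1}=\mathtt{x}_{u_2}=\mathtt{x}_{u_3}>\mathtt{x}_{u_4}$, then splits on whether $\mathtt{x}_{v_2}>\mathtt{x}_{v_3}$ or $\mathtt{x}_{v_2}=\mathtt{x}_{v_3}$ (a different dichotomy from yours), with further subcases on $|N_G(v_2)\cap N_G(v_3)|$ or $|N_G(v_1)\cap N_G(v_j)|$. Each of the four subcases is dispatched by exhibiting a concrete pair of independent edges to which Lemma~\ref{lem_prop} (P3)--(P5) or Lemma~\ref{lem_adja} applies, yielding either a contradictory Fiedler-value inequality or a forced cubic bipartite subgraph on at most ten vertices. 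The reason this handles ties is that Lemmas~\ref{lem_prop} and~\ref{lem_adja} come in both strict and non-strict versions, so equality among Fiedler entries is absorbed locally, one edge pair at a time, rather than by a single linear-extremality argument. Your outline correctly identifies the shape of the endgame, but the proof is incomplete without that case analysis.
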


\begin{proof}
Since these two inequalities can be proved by the same approach, we only present the proof of the inequality $\mathtt{x}_{u_{1}}\neq \mathtt{x}_{u_{3}}$.

We establish the inequality $\mathtt{x}_{u_{1}}\neq \mathtt{x}_{u_{3}}$ by contradiction.
Suppose that $\mathtt{x}_{u_{1}}=\mathtt{x}_{u_{3}}$. Hence $\mathtt{x}_{u_{1}}=\mathtt{x}_{u_{2}}=\mathtt{x}_{u_{3}}$.
Note that $(3-a(G))\mathtt{x}_{v_{1}}=\mathtt{x}[N_{G}(v_{1})]=3\mathtt{x}_{u_{1}}$. Since $\mathtt{x}_{v_{1}}>0$ and $a(G)<3$,
 we have $\mathtt{x}_{v_{1}}>\mathtt{x}_{u_{1}}>0$.
 Moreover, one can see that
 $$3\mathtt{x}_{v_{3}}\leq\mathtt{x}_{v_{1}}+\mathtt{x}_{v_{2}}+\mathtt{x}_{v_{3}}=(3-a(G))\mathtt{x}_{u_{1}}<3\mathtt{x}_{u_{1}},$$
 and so $\mathtt{x}_{v_{3}}<\mathtt{x}_{u_{1}}$. Hence $\mathtt{x}_{v_{1}}>\mathtt{x}_{v_{3}}$.

 Consider the vertex $u_{4}$. Clearly, $u_{4}\nsim v_{1}$. Thus, we obtain that
 $$\mathtt{x}[N_{G}(u_{4})]\leq \mathtt{x}_{v_{2}}+2\mathtt{x}_{v_{3}}<\mathtt{x}_{v_{1}}+\mathtt{x}_{v_{2}}+\mathtt{x}_{v_{3}}=\mathtt{x}[N_{G}(u_{1})].$$
 By Lemma \ref{lem_neig}, it follows that $\mathtt{x}_{u_{4}}<\mathtt{x}_{u_{1}}$.

 In summary, we obtain that
 $$\mathtt{x}_{u_{1}}=\mathtt{x}_{u_{2}}=\mathtt{x}_{u_{3}}>\mathtt{x}_{u_{4}}~~~\text{and}~~~\mathtt{x}_{v_{1}}>\mathtt{x}_{v_{3}}.$$
 Next we will divide the proof into the following two cases.

\medskip

\noindent{\bf Case 1.} $\mathtt{x}_{v_{2}}\neq \mathtt{x}_{v_{3}}$,
 i.e., $\mathtt{x}_{v_{2}}>\mathtt{x}_{v_{3}}$.

 \medskip

 If $u_{2}\nsim v_{2}$, then
 $$\mathtt{x}[N_{G}(u_{2})]\leq \mathtt{x}_{v_{1}}+2\mathtt{x}_{v_{3}}
 <\mathtt{x}_{v_{1}}+\mathtt{x}_{v_{2}}+\mathtt{x}_{v_{3}}=\mathtt{x}[N_{G}(u_{1})].$$
 This implies that $\mathtt{x}[N_{G}(u_{2})]<\mathtt{x}[N_{G}(u_{1})]$, and it follows from Lemma \ref{lem_neig} that $\mathtt{x}_{u_{1}}<\mathtt{x}_{u_{2}}$,
 contradicting the assumption that $\mathtt{x}_{u_{1}}=\mathtt{x}_{u_{2}}$. Thus, $u_{2}\sim v_{2}$.
 If $u_{3}\nsim v_{2}$, then
 $$\mathtt{x}[N_{G}(u_{3})]\leq \mathtt{x}_{v_{1}}+2\mathtt{x}_{v_{3}}
 <\mathtt{x}_{v_{1}}+\mathtt{x}_{v_{2}}+\mathtt{x}_{v_{3}}=\mathtt{x}[N_{G}(u_{1})],$$
 contradicting the assumption that $\mathtt{x}_{u_{1}}=\mathtt{x}_{u_{3}}$. Hence $u_{3}\sim v_{2}$.
 It follows that $N_{G}(v_{2})=\{u_{1},u_{2},u_{3}\}$, and so $\mathtt{x}_{v_{1}}=\mathtt{x}_{v_{2}}$.

Since $\mathtt{x}_{v_{2}}>\mathtt{x}_{v_{3}}$, by Lemma \ref{lem_neig}, we have
$\mathtt{x}[N_{G}(v_{2})]>\mathtt{x}[N_{G}(v_{3})].$
Note that $$\mathtt{x}[N_{G}(v_{2})]=\mathtt{x}_{u_{1}}+\mathtt{x}_{u_{2}}+\mathtt{x}_{u_{3}}$$ and
$$\mathtt{x}[N_{G}(v_{3})]=\mathtt{x}_{u_{1}}+\sum_{w\in N_{G}(v_{3})\backslash\{u_{1}\}}\mathtt{x}_{w}.$$
It follows that $1\leq |N_{G}(v_{3})\cap N_{G}(v_{2})|\leq 2$.

\medskip

\noindent{\bf Subcase 1.1.} $|N_{G}(v_{3})\cap N_{G}(v_{2})|=1$.

 \medskip

 In this case, $v_{3}\nsim u_{2}$ and $v_{3}\nsim u_{3}$.
 Suppose that $N_{G}(v_{3})=\{u_{1},u_{i},u_{j}\}$, where $i>j\geq 4$. Note that $N_{G}(u_{1})=\{v_{1},v_{2},v_{3}\}$.
 Let $N_{G}(u_{2})=\{v_{1},v_{2},v_{s}\}$ with $s\geq 4$.
 Since $\mathtt{x}_{u_{1}}=\mathtt{x}_{u_{2}}$, we have $\mathtt{x}[N_{G}(u_{1})]=\mathtt{x}[N_{G}(u_{2})]$,
 hence $\mathtt{x}_{v_{s}}=\mathtt{x}_{v_{3}}$.

 Assume that $v_{s}\nsim u_{i}$. Hence $\{u_{2}v_{s},u_{i}v_{3}\}$ is a pair of independent edges. Since $\mathtt{x}_{u_{2}}>\mathtt{x}_{u_{i}}$
 and $\mathtt{x}_{v_{s}}=\mathtt{x}_{v_{3}}$, it follows from Lemma \ref{lem_prop} that
 $\min\{\mathtt{x}_{u_{1}},\mathtt{x}_{u_{j}}\}\geq \mathtt{x}_{u_{2}}>\mathtt{x}_{u_{i}}$, contradicting the fact $\mathtt{x}_{u_{2}}>\mathtt{x}_{u_{j}}$.
 Therefore, we have $v_{s}\sim u_{i}$. A similar argument shows that $v_{s}\sim u_{j}$.
 Hence $N_{G}(v_{s})=\{u_{2},u_{i},u_{j}\}$.

 Let $N_{G}(u_{3})=\{v_{1},v_{2},v_{t}\}$ with $t\geq 4$ and $t\neq s$.
 Note that $\mathtt{x}_{u_{3}}=\mathtt{x}_{u_{1}}$. It follows from Lemma \ref{lem_neig} that $\mathtt{x}[N_{G}(u_{3})]=\mathtt{x}[N_{G}(u_{1})]$,
 which implies that $\mathtt{x}_{v_{t}}=\mathtt{x}_{v_{3}}$.
 Since there are at least 12 vertices in $G$, the vertex $v_{t}$ cannot be adjacent to both $u_{i}$ and $u_{j}$.
  We may assume that $v_{t}\nsim u_{i}$. Clearly, $\{u_{3}v_{t},u_{i}v_{3}\}$ is a pair of independent edges.
  Since $\mathtt{x}_{u_{3}}>\mathtt{x}_{u_{i}}$ and $\mathtt{x}_{v_{t}}=\mathtt{x}_{v_{3}}$, by Lemma \ref{lem_prop},
  we obtain that $\min\{\mathtt{x}_{u_{1}},\mathtt{x}_{u_{j}}\}\geq \mathtt{x}_{u_{3}}>\mathtt{x}_{u_{i}}$.
  But this contradicts the fact $\mathtt{x}_{u_{j}}< \mathtt{x}_{u_{3}}$.

  \medskip

\noindent{\bf Subcase 1.2.} $|N_{G}(v_{3})\cap N_{G}(v_{2})|=2$.

  \medskip

  Without loss of generality, we may assume that $v_{3}\sim u_{2}$ and $v_{3}\nsim u_{3}$. Then we have $N_{G}(u_{1})=N_{G}(u_{2})=\{v_{1},v_{2},v_{3}\}$.
  Suppose that $N_{G}(u_3)=\{v_{1},v_{2},v_{t}\}$, where $t\geq 4$. Since $\mathtt{x}_{u_{1}}=\mathtt{x}_{u_{3}}$,
  it follows from Lemma \ref{lem_neig} that $\mathtt{x}[N_{G}(u_{3})]=\mathtt{x}[N_{G}(u_{1})]$, hence $\mathtt{x}_{v_{3}}=\mathtt{x}_{v_{t}}$.

  Assume that $N_{G}(v_{3})=\{u_{1},u_{2},u_{k}\}$ with $k\geq 4$. Clearly, $v_{t}\nsim u_{1}$ and $v_{t}\nsim u_{2}$.
  If $u_{k}\sim v_{t}$, then
  $$\mathtt{x}[N_{G}(v_{t})]<2\mathtt{x}_{u_{3}}+\mathtt{x}_{u_{k}}=\mathtt{x}_{u_{1}}+\mathtt{x}_{u_{2}}+\mathtt{x}_{u_{k}}
  =\mathtt{x}[N_{G}(v_{3})].$$
By Lemma \ref{lem_neig}, it follows that $\mathtt{x}_{v_{t}}<\mathtt{x}_{v_{3}}$, a contradiction.
Thus, we have $u_{k}\nsim v_{t}$. One can see that $\{v_{3}u_{k},v_{t}u_{3}\}$ is a pair of independent edges.
Note that $\mathtt{x}_{u_{3}}>\mathtt{x}_{u_{k}}$ and $\mathtt{x}_{v_{3}}=\mathtt{x}_{v_{t}}$.
According to Lemma \ref{lem_prop}, it follows that
$$\min\{u_{1},u_{2}\}\geq \mathtt{x}_{u_{3}}>\mathtt{x}_{u_{k}}
\geq \max\{\mathtt{x}_{w}:w\in N_{G}(v_{t})\backslash\{u_{3}\}\}.$$
Thus, $$\mathtt{x}_{u_{2}}+\mathtt{x}_{u_{k}}>\sum_{w\in N_{G}(v_{t})\backslash\{u_{3}\}}\mathtt{x}_{w}.$$
Since $\mathtt{x}_{u_{1}}=\mathtt{x}_{u_{3}}$, we have
$$\mathtt{x}[N_{G}(v_{3})]=\mathtt{x}_{u_{1}}+\mathtt{x}_{u_{2}}+\mathtt{x}_{u_{k}}
>\mathtt{x}_{u_{3}}+\sum_{w\in N_{G}(v_{t})\backslash\{u_{3}\}}\mathtt{x}_{w}=\mathtt{x}[N_{G}(v_{t})].$$
Hence it follows from Lemma \ref{lem_neig} that $\mathtt{x}_{v_{3}}>\mathtt{x}_{v_{t}}$, a contradiction.

\medskip

\noindent{\bf Case 2.} $\mathtt{x}_{v_{2}}=\mathtt{x}_{v_{3}}$, i.e., $\mathtt{x}_{v_{1}}>\mathtt{x}_{v_{2}}=\mathtt{x}_{v_{3}}$.

\medskip

Consider the neighbours of $v_{2}$ and $v_{3}$. Note that $v_{2}\sim u_{1}$ and $N_{G}(v_{1})=\{u_{1},u_{2},u_{3}\}$.
This shows that $u_{1}\in N_{G}(v_{1})\cap N_{G}(v_{2})$,
and so $|N_{G}(v_{1})\cap N_{G}(v_{2})|\geq 1$. Moreover, since $\mathtt{x}_{v_{1}}>\mathtt{x}_{v_{2}}$, by Lemma \ref{lem_neig},
we have $\mathtt{x}[N_{G}(v_{1})]>\mathtt{x}[N_{G}(v_{2})]$. Therefore, the vertex $v_{2}$ cannot be adjacent to both $u_{2}$ and $u_{3}$.
It follows that $1\leq |N_{G}(v_{1})\cap N_{G}(v_{2})|\leq 2$.
Similarly, we can also obtain that $1\leq |N_{G}(v_{1})\cap N_{G}(v_{3})|\leq 2$.

\medskip

\noindent{\bf Subcase 2.1.} Either $|N_{G}(v_{1})\cap N_{G}(v_{2})|=2$ or $|N_{G}(v_{1})\cap N_{G}(v_{3})|=2$.

\medskip

Suppose first that $|N_{G}(v_{1})\cap N_{G}(v_{2})|=2$. Recall that $\mathtt{x}_{u_{1}}=\mathtt{x}_{u_{2}}=\mathtt{x}_{u_{3}}$.
Without loss of generality, we may assume that $N_{G}(v_{2})=\{u_{1},u_{2},u_{a}\}$ where $a\geq 4$.

We claim that $N_{G}(u_{3})\backslash \{v_{1}\}\subseteq N_{G}(u_{a})$. If not, suppose that there is a vertex $v_{i}\in N_{G}(u_{3})\backslash \{v_{1}\}$, where $i\geq 3$,
such that $v_{i}\nsim u_{a}$. Thus $\{u_{3}v_{i},u_{a}v_{2}\}$ is a pair of independent edges in $G$.
Note that $\mathtt{x}_{u_{3}}>\mathtt{x}_{u_{a}}$ and $\mathtt{x}_{v_{2}}\geq \mathtt{x}_{v_{i}}$.
According to Lemma \ref{lem_prop},
we obtain that $\min\{\mathtt{x}_{w}:w\in N_{G}(u_{3})\backslash \{v_{i}\}\}>\mathtt{x}_{v_{2}}$.
But, in this case, $v_{1}$ is the only one vertex with $\mathtt{x}_{v_{1}}>\mathtt{x}_{v_{2}}$, then we obtain a contradiction.

Suppose that $u_{3}\sim v_{3}$. We may assume that $N_{G}(u_{3})=\{v_{1},v_{3},v_{i}\}$ with $i\geq 4$.
If $u_{2}\sim v_{i}$, then the subgraph of $G$ induced by vertices $\{u_{1},u_{2},u_{3},u_{a},v_{1},v_{2},v_{3},v_{i}\}$ is a cubic bipartite graph,
this contradicts that $G$ contains at least 12 vertices. Hence $u_{2}\nsim v_{i}$.
Let $v_{j}$ be the neighbour of $u_{2}$ other than $v_{1}$ and $v_{2}$. Clearly, $j\geq 3$ and $j\neq i$.
Since $N_{G}(u_{3})\backslash \{v_{1}\}\subseteq N_{G}(u_{a})$, we have $N_{G}(u_{a})=\{v_{2},v_{3},v_{i}\}$, and so $u_{a}\nsim v_{j}$.
It follows that $\{u_{2}v_{j},u_{a}v_{3}\}$ is a pair of independent edges in $G$.
Since $\mathtt{x}_{u_{2}}>\mathtt{x}_{u_{a}}$ and $\mathtt{x}_{v_{3}}\geq \mathtt{x}_{v_{j}}$,
by Lemma \ref{lem_prop}, we obtain that $N_{G}(u_{2})\cap N_{G}(u_{a})=\emptyset$.
However, the vertex $v_{2}$ is adjacent to both $u_{2}$ and $u_{a}$, a contradiction.

Suppose now that $u_{3}\nsim v_{3}$. Let $N_{G}(u_{3})=\{v_{1},v_{i},v_{j}\}$ with $i\geq 3$ and $j\geq 3$.
Since $N_{G}(u_{3})\backslash \{v_{1}\}\subseteq N_{G}(u_{a})$, it follows that $N_{G}(u_{a})=\{v_{2},v_{i},v_{j}\}$.
Let $N^{*}(v_{3})=\{w\in N_{G}(v_{3}):\mathtt{x}_{w}<\mathtt{x}_{u_{3}}\}$.
Moreover, since $\mathtt{x}_{v_{3}}<\mathtt{x}_{v_{1}}$, by Lemma \ref{lem_neig}, we have $\mathtt{x}[N_{G}(v_{1})]>\mathtt{x}[N_{G}(v_{3})]$.
This implies that $|N^{*}(v_{3})|\geq 1$. Suppose that $u_{b}\in N^{*}(v_{3})$.
If $u_{b}\nsim v_{i}$, then $\{u_{3}v_{i},u_{b}v_{3}\}$ is a pair of independent edges in $G$.
Clearly, $\mathtt{x}_{u_{b}}<\mathtt{x}_{u_{3}}$ and $\mathtt{x}_{v_{3}}\geq \mathtt{x}_{v_{j}}$.
According to Lemma \ref{lem_prop}, we have $\min\{\mathtt{x}_{w}:w\in N_{G}(v_{3})\backslash\{v_{i}\}\}>\mathtt{x}_{v_{3}}$,
this leads to $\mathtt{x}_{v_{j}}>\mathtt{x}_{v_{3}}$, a contradiction. Therefore, we obtain that $u_{b}\sim v_{i}$.
Similarly, one can see that $u_{b}$ is also adjacent to $v_{j}$.
It follows that $v_{i}$ and $v_{j}$ are adjacent to all vertices in $N^{*}(v_{3})$.
On the other hand, since the degree of any vertex in $G$ is 3, we can see that $|N^{*}(v_{3})|=1$,
that is, $u_{b}$ is the only one vertex in $N^{*}(v_{3})$.
This leads to that $v_{3}$ must be adjacent to $u_{2}$.
But, in this case, the subgraph of $G$ induced by vertices $\{u_{1},u_{2},u_{3},u_{a},u_{b},v_{1},v_{2},v_{3},v_{i},v_{j}\}$ is a cubic bipartite graph,
which contradicting the fact that $G$ has at least 12 vertices.

In summary, we always obtain a contradiction if $|N_{G}(v_{1})\cap N_{G}(v_{2})|=2$.
Indeed, by a similar argument, one can also obtain a contradiction when $|N_{G}(v_{1})\cap N_{G}(v_{3})|=2$.

\medskip

\noindent{\bf Subcase 2.2.} $|N_{G}(v_{1})\cap N_{G}(v_{2})|=|N_{G}(v_{1})\cap N_{G}(v_{3})|=1$.

\medskip

Suppose that $N_{G}(v_{2})=\{u_{1},u_{a},u_{b}\}$ and $N_{G}(u_{2})=\{v_{1},v_{i},v_{j}\}$,
where $a,b,i,j$ are four integers greater than 3.
Since $\mathtt{x}_{u_{2}}>\max\{\mathtt{x}_{u_{a}},\mathtt{x}_{u_{b}}\}$
and $\mathtt{x}_{v_{2}}\geq \max\{\mathtt{x}_{v_{i}},\mathtt{x}_{v_{j}}\}$,
by Lemma \ref{lem_adja}, we obtain that $u_{a}\sim v_{i}$, $u_{a}\sim v_{j}$, $u_{b}\sim v_{i}$ and $u_{b}\sim v_{j}$.
Let $v_{k}$ be a neighbour of $u_{3}$ other than $v_{1}$. Clearly, $k\neq i$, $k\neq j$ and $k\geq 4$.
Note that $\mathtt{x}_{u_{3}}>\max\{\mathtt{x}_{u_{a}},\mathtt{x}_{u_{b}}\}$
and $\mathtt{x}_{v_{2}}\geq \mathtt{x}_{v_{k}}$.
It follows from Lemma \ref{lem_adja} that $v_{k}$ is adjacent to both $u_{a}$ and $u_{b}$.
This leads to that $\{v_{2},v_{i},v_{j},v_{k}\}\subseteq N_{G}(u_{a})$, contradicting the fact that $|N_{G}(u_{a})|=3$.
\end{proof}

\begin{lemma}\label{lem_u2v2}
  $u_{2}\sim v_{2}$.
\end{lemma}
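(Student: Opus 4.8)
The plan is to argue by contradiction, so suppose $u_{2}\nsim v_{2}$. Since $u_{2}\in N_{G}(v_{1})$ and $v_{2}\in N_{G}(u_{1})$, I may write $N_{G}(u_{2})=\{v_{1},v_{p},v_{q}\}$ and $N_{G}(v_{2})=\{u_{1},u_{s},u_{t}\}$, where $p,q,s,t\geq 3$ because $v_{2}\notin N_{G}(u_{2})$ and $u_{2}\notin N_{G}(v_{2})$. The whole argument would be driven by the eigen-equation of Lemma \ref{lem_neig}: writing $N_{G}(v_{3})=\{u_{1},u_{c},u_{d}\}$ (valid since $u_{1}\sim v_{3}$), the common neighbour $u_{1}$ cancels and $\mathtt{x}[N_{G}(v_{2})]-\mathtt{x}[N_{G}(v_{3})]=(\mathtt{x}_{u_{s}}+\mathtt{x}_{u_{t}})-(\mathtt{x}_{u_{c}}+\mathtt{x}_{u_{d}})$. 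My goal is to show this is negative, for then Lemma \ref{lem_neig} yields $\mathtt{x}_{v_{2}}<\mathtt{x}_{v_{3}}$, contradicting the labeling $\mathtt{x}_{v_{2}}\geq\mathtt{x}_{v_{3}}$. Everything thus reduces to two claims: (I) at least one of $\mathtt{x}_{u_{s}},\mathtt{x}_{u_{t}}$ is $<\mathtt{x}_{u_{2}}$ (the other being $\leq\mathtt{x}_{u_{2}}$ automatically, as $s,t\geq 3$); and (II) both $\mathtt{x}_{u_{c}},\mathtt{x}_{u_{d}}\geq\mathtt{x}_{u_{2}}$. Together these give $\mathtt{x}_{u_{c}}+\mathtt{x}_{u_{d}}\geq 2\mathtt{x}_{u_{2}}>\mathtt{x}_{u_{s}}+\mathtt{x}_{u_{t}}$, as desired.

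For (I), suppose it fails, i.e. $\mathtt{x}_{u_{s}}=\mathtt{x}_{u_{t}}=\mathtt{x}_{u_{2}}$. Since $s,t\geq 3$ this forces $\mathtt{x}_{u_{3}}=\mathtt{x}_{u_{2}}$, and Lemma \ref{lem_neig} applied to $v_{1},v_{2}$ gives $\mathtt{x}[N_{G}(v_{1})]=\mathtt{x}_{u_{1}}+2\mathtt{x}_{u_{2}}=\mathtt{x}[N_{G}(v_{2})]$, hence $\mathtt{x}_{v_{1}}=\mathtt{x}_{v_{2}}$. The statement $u_{2}\sim v_{2}$ and the whole configuration are symmetric under $U\leftrightarrow V$, and the symmetric failure of (I) would likewise force $\mathtt{x}_{u_{1}}=\mathtt{x}_{u_{2}}$. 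But $\mathtt{x}_{u_{1}}=\mathtt{x}_{u_{2}}=\mathtt{x}_{u_{3}}$ contradicts Lemma \ref{lem_neq}. Hence at least one of the two symmetric versions of (I) holds, and it suffices to treat the side on which it does (running, in the symmetric case, the comparison of $\mathtt{x}[N_{G}(u_{2})]$ with $\mathtt{x}[N_{G}(u_{3})]$ instead).

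For (II), I would rule out a neighbour $u_{c}\in N_{G}(v_{3})\setminus\{u_{1}\}$ with $\mathtt{x}_{u_{c}}<\mathtt{x}_{u_{2}}$. Using the small neighbour furnished by (I), Lemma \ref{lem_adja} forces a rigid local structure: the edges among $\{u_{2},u_{s},u_{t}\}$ and $\{v_{2},v_{p},v_{q}\}$ are driven into a copy of $K_{3,3}$ with the single edge $u_{2}v_{2}$ deleted, so in particular $N_{G}(v_{p})=\{u_{2},u_{s},u_{t}\}$ and $N_{G}(u_{s})=\{v_{2},v_{p},v_{q}\}$. Applying Lemma \ref{lem_adja} to the nonadjacent pair $(u_{2},v_{3})$, whose relevant neighbours are $v_{p},v_{q}$, then forces $u_{c}\sim v_{p}$ and $u_{c}\sim v_{q}$; hence $u_{c}\in N_{G}(v_{p})=\{u_{2},u_{s},u_{t}\}$. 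Now $u_{c}\neq u_{2}$ (as $u_{2}\nsim v_{3}$) and $u_{c}\neq u_{s}$ (since $v_{3}\notin N_{G}(u_{s})$), so $u_{c}=u_{t}$; but then $u_{t}$ is adjacent to $v_{2},v_{p},v_{q}$ and $v_{3}$, giving $d_{G}(u_{t})\geq 4$, a contradiction. Thus every non-$u_{1}$ neighbour of $v_{3}$ has value $\geq\mathtt{x}_{u_{2}}$, which is (II).

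The hard part is the bookkeeping of the degenerate coincidences, and I expect that to be the main obstacle. The clean form of (II) above presupposes that $(u_{2},v_{3})$ is nonadjacent (i.e. $v_{3}\notin\{v_{p},v_{q}\}$) and that $\mathtt{x}_{v_{3}}\geq\max\{\mathtt{x}_{v_{p}},\mathtt{x}_{v_{q}}\}$, and it presupposes that the near-$K_{3,3}$ gadget can actually be built; whether one invokes part (1) or part (2) of Lemma \ref{lem_adja} to build each of the four forced edges depends on which strict inequality among the Fiedler values $\mathtt{x}_{u_{2}},\mathtt{x}_{u_{s}},\mathtt{x}_{u_{t}}$ and $\mathtt{x}_{v_{2}},\mathtt{x}_{v_{p}},\mathtt{x}_{v_{q}}$ is available. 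When only one of $u_{s},u_{t}$ (or of $v_{p},v_{q}$) is strictly smaller than $\mathtt{x}_{u_{2}}$ (resp. $\mathtt{x}_{v_{2}}$), only a partial gadget is forced and a separate argument is needed. I would therefore carry out a careful split into these tie-cases, closing each either by the eigen-equation of Lemma \ref{lem_neig}, by a degree count as above, or by the strict inequalities $\mathtt{x}_{u_{1}}\neq\mathtt{x}_{u_{3}}$ and $\mathtt{x}_{v_{1}}\neq\mathtt{x}_{v_{3}}$ of Lemma \ref{lem_neq}, exactly in the spirit of the proof of that lemma.
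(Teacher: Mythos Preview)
Your main line — build the near-$K_{3,3}$ gadget on $\{u_{2},u_{s},u_{t}\}\times\{v_{2},v_{p},v_{q}\}$, then use Lemma~\ref{lem_adja} at the pair $(u_{2},v_{3})$ to prove (II) — does give a clean contradiction, but only in the special situation where $v_{2}\nsim u_{3}$ \emph{and} $u_{2}\nsim v_{3}$. Outside of that situation the gadget cannot exist: if $u_{3}\in\{u_{s},u_{t}\}$ then $v_{1}\in N_{G}(u_{3})$ already blocks $N_{G}(u_{3})=\{v_{2},v_{p},v_{q}\}$, and symmetrically if $v_{3}\in\{v_{p},v_{q}\}$. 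These are not ``tie-cases'' of equal Fiedler values; they are structural adjacency cases, and in them your claim (II) is actually false. For instance, when $v_{2}\sim u_{3}$ and $u_{2}\sim v_{3}$ the paper shows (its Subcase~1.1) that the third neighbour of $v_{3}$ is some $u_{i}$ with $i\geq 4$ and $\mathtt{x}_{u_{i}}<\mathtt{x}_{u_{2}}$, so (II) fails and the comparison $\mathtt{x}[N_{G}(v_{2})]<\mathtt{x}[N_{G}(v_{3})]$ cannot be obtained from (I)+(II). The paper closes that branch by a different mechanism: Lemma~\ref{lem_prop}(P3) forces $N_{G}(u_{2})\cap N_{G}(u_{3})=\emptyset$, contradicting $v_{1}\in N_{G}(u_{2})\cap N_{G}(u_{3})$. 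Your list of tools for the deferred cases (eigen-equation, degree count, Lemma~\ref{lem_neq}) does not include this disjoint-neighbourhood argument, and it is exactly what is needed.

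For comparison, the paper's organization is: first prove a strict ``Claim~1'' ($\mathtt{x}_{u_{1}}>\mathtt{x}_{u_{2}}$ and $\mathtt{x}_{v_{1}}>\mathtt{x}_{v_{2}}$, which is strictly stronger than your (I)), then split on whether $v_{2}\sim u_{3}$. Case~1 ($v_{2}\sim u_{3}$) is closed, in both of its subcases, via the common-neighbour contradiction from Lemma~\ref{lem_prop}(P3). Case~2 ($v_{2}\nsim u_{3}$, $u_{2}\nsim v_{3}$) is where the gadget forms; there the paper finishes with Fiedler's connectivity lemma (Lemma~\ref{lem_con}): $\{u_{1},v_{1}\}$ is a vertex cut separating $v_{2}$ from $v_{3}$, but both lie in $[V(G)]_{\leq\varepsilon}$ while $u_{1},v_{1}$ do not, contradicting connectedness. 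Your (II)-argument at $(u_{2},v_{3})$ is in fact a correct and more elementary substitute for that last step, so that part of your plan is a genuine contribution — but it applies only to Case~2, and the proposal as written does not supply the Case~1 arguments.
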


\begin{proof} Suppose to the contrary that $u_{2}\nsim v_{2}$. To obtain a contradiction, we first prove the following claim.

  \medskip

\noindent{\bf Claim 1.} $\mathtt{x}_{u_{1}}>\mathtt{x}_{u_{2}}$ and $\mathtt{x}_{v_{1}}>\mathtt{x}_{v_{2}}$.

\begin{proof}[\bf Proof of Claim 1.]
  We first show that $\mathtt{x}_{v_{1}}>\mathtt{x}_{v_{2}}$.
  Suppose that $\mathtt{x}_{v_{1}}\leq \mathtt{x}_{v_{2}}$. Since $\mathtt{x}_{v_{1}}\geq \mathtt{x}_{v_{2}}$,
  we have $\mathtt{x}_{v_{1}}=\mathtt{x}_{v_{2}}$. According to Lemma \ref{lem_neq}, it follows that
  $\mathtt{x}_{v_{1}}=\mathtt{x}_{v_{2}}>\mathtt{x}_{v_{3}}$ and $\mathtt{x}_{u_{1}}>\mathtt{x}_{u_{3}}$.

  If $v_{2}\nsim u_{3}$, then we assume that $N_{G}(v_{2})=\{u_{1},u_{i},u_{j}\}$ where $i\geq 4$ and $j\geq 4$.
  Let $N_{G}(u_{2})=\{v_{1},v_{s},v_{t}\}$ with $i\geq 3$ and $t\geq 3$.
  Note that $\mathtt{x}_{v_{2}}>\max\{\mathtt{x}_{v_{s}},\mathtt{x}_{v_{t}}\}$
  and $\mathtt{x}_{u_{2}}\geq \max\{\mathtt{x}_{u_{i}},\mathtt{x}_{u_{j}}\}$.
  By Lemma \ref{lem_adja}, we obtain that $v_{s}\sim u_{i}$, $v_{s}\sim u_{j}$, $v_{t}\sim u_{i}$ and $v_{t}\sim u_{j}$.
  One can see that $N_{G}(u_{3})\backslash\{v_{1},v_{2},v_{s},v_{t}\}\neq \emptyset$.
  Choose a vertex $v_{k}\in N_{G}(u_{3})\backslash\{v_{1},v_{2},v_{s},v_{t}\}$.
  Clearly, $\mathtt{x}_{v_{2}}>\mathtt{x}_{v_{k}}$ and $\mathtt{x}_{u_{3}}\geq \max\{\mathtt{x}_{u_{i}},\mathtt{x}_{u_{j}}\}$.
  It follows from Lemma \ref{lem_adja} that $v_{k}\sim u_{i}$ and $v_{k}\sim u_{j}$.
  Then we obtain that $\{v_{2},v_{s},v_{t},v_{k}\}\subseteq N_{G}(u_{i})$, which implies that the degree of $u_{i}$ is at least 4, a contradiction.

  Assume that $v_{2}\sim u_{3}$.
  One can see that $N_{G}(u_{2})\backslash N_{G}(u_{3})\neq\emptyset$.
  Let $v_{k}$ be a vertex in $N_{G}(u_{2})\backslash N_{G}(u_{3})$ with $k\geq 3$.
  Clearly, $\{u_{2}v_{k},u_{3}v_{2}\}$ is a pair of independent edges in $G$.
  Note that $\mathtt{x}_{u_{2}}\geq \mathtt{x}_{u_{3}}$ and $\mathtt{x}_{v_{2}}>\mathtt{x}_{v_{k}}$.
  According to Lemma \ref{lem_prop}, we obtain that $N_{G}(u_{2})\cap N_{G}(u_{3})=\emptyset$,
  which contradicts the fact that $v_{1}$ is adjacent to both $u_{2}$ and $u_{3}$.

  Hence $\mathtt{x}_{v_{1}}>\mathtt{x}_{v_{2}}$. A similar argument shows that $\mathtt{x}_{u_{1}}>\mathtt{x}_{u_{2}}$.
  This completes the proof of Claim 1.
  \end{proof}

  It follows from Claim 1 that $\mathtt{x}_{u_{1}}>\mathtt{x}_{u_{2}}\geq \mathtt{x}_{u_{3}}$
  and $\mathtt{x}_{v_{1}}>\mathtt{x}_{v_{2}}\geq \mathtt{x}_{v_{3}}$.
  Recall that $u_{2}\nsim v_{2}$. According to the adjacency relation of $v_{2}$ and $u_{3}$,
  we divide the proof into the following two cases.

  \medskip

\noindent{\bf Case 1.} $v_{2}\sim u_{3}$.

  \medskip

  Suppose that $N_{G}(v_{2})=\{u_{1},u_{3},u_{i}\}$ where $i\geq 4$.
  Since $\mathtt{x}_{v_{1}}>\mathtt{x}_{v_{2}}$, it follows from Lemma \ref{lem_neig} that
  $\mathtt{x}[N_{G}(v_{1})]>\mathtt{x}[N_{G}(v_{2})]$, hence $\mathtt{x}_{u_{2}}>\mathtt{x}_{u_{i}}$.
  If $u_{3}\sim v_{3}$, then $N_{G}(u_{3})=\{v_{1},v_{2},v_{3}\}=N_{G}(u_{1})$.
  By Lemma \ref{lem_neig}, we have $\mathtt{x}_{u_{3}}=\mathtt{x}_{u_{1}}$, a contradiction.
  Hence $u_{3}\nsim v_{3}$.

  \medskip

\noindent{\bf Subcase 1.1.} $u_{2}\sim v_{3}$.

   \medskip

   Thus we may assume that $N_{G}(u_{2})=\{v_{1},v_{3},v_{k}\}$ with $k\geq 4$.
   Since $\mathtt{x}_{u_{2}}>\mathtt{x}_{u_{i}}$ and $\mathtt{x}_{v_{2}}\geq \mathtt{x}_{v_{3}}\geq \mathtt{x}_{v_{k}}$,
   Lemma \ref{lem_adja} shows that $u_{i}$ is adjacent to both $v_{3}$ and $v_{k}$.
   If $u_{3}\sim v_{k}$, then the subgraph induced by $\{u_{1},u_{2},u_{3},u_{i},v_{1},v_{2},v_{3},v_{k}\}$ is a cubic bipartite graph,
   which contradicts the fact that there are at least 12 vertices in $G$. Hence $u_{3}\nsim v_{k}$.
   One can see that $\{u_{2}v_{k},u_{3}v_{2}\}$ is a pair of independent edges in $G$.
   Since $\mathtt{x}_{u_{1}}>\mathtt{x}_{u_{2}}$, by Lemma \ref{lem_neig}, we obtain that $\mathtt{x}_{v_{2}}>\mathtt{x}_{v_{k}}$.
   Note also that $\mathtt{x}_{u_2}\geq \mathtt{x}_{u_{3}}$.
   Using Lemma \ref{lem_prop}, it follows that $N_{G}(u_{2})\cap N_{G}(u_{3})=\emptyset$.
   But this contradicts the fact that $v_{1}$ is adjacent to both $u_{2}$ and $u_{3}$.

   \medskip

\noindent{\bf Subcase 1.2.} $u_{2}\nsim v_{3}$.

   \medskip

   Suppose that $N_{G}(u_{2})=\{v_{1},v_{s},v_{t}\}$ with $t>s\geq 4$.
  According to Lemma \ref{lem_adja}, it follows that both $u_{i}\sim v_{s}$ and $u_{i}\sim v_{t}$.
   Suppose that $N_{G}(v_{s})=\{u_{2},u_{i},u_{j}\}$ where $j\geq 3$.
   Since $\mathtt{x}_{u_{1}}>\mathtt{x}_{u_{2}}$ and $\mathtt{x}_{u_{3}}\geq \mathtt{x}_{u_{j}}$,
   we obtain that $$\mathtt{x}[N_{G}(v_{2})]=\mathtt{x}_{u_{1}}+\mathtt{x}_{u_{3}}+\mathtt{x}_{u_{i}}
   >\mathtt{x}_{u_{2}}+\mathtt{x}_{u_{i}}+\mathtt{x}_{u_{j}}=\mathtt{x}[N_{G}(v_{s})].$$
   It follows from Lemma \ref{lem_neig} that $\mathtt{x}_{v_{2}}>\mathtt{x}_{v_{s}}$.
   This implies that $\mathtt{x}_{v_{2}}>\mathtt{x}_{v_{s}}\geq \mathtt{x}_{v_{t}}$.
   If $u_{3}\sim v_{t}$, then $N_{G}(u_{3})=\{v_{1},v_{2},v_{t}\}$.
  Since $\mathtt{x}_{v_{2}}>\mathtt{x}_{v_{s}}$, it follows that
  $$\mathtt{x}[N_{G}(u_{3})]=\mathtt{x}_{v_{1}}+\mathtt{x}_{v_{2}}+\mathtt{x}_{v_{t}}
   >\mathtt{x}_{v_{1}}+\mathtt{x}_{v_{s}}+\mathtt{x}_{v_{t}}=\mathtt{x}[N_{G}(u_{2})].$$
   By Lemma \ref{lem_neig}, we obtain that $\mathtt{x}_{u_{3}}>\mathtt{x}_{u_{2}}$, a contradiction.
   Hence $u_{3}\nsim v_{t}$. Clearly, $\{u_{2}v_{t},u_{3}v_{2}\}$ is a pair of independent edges in $G$.
   Since $\mathtt{x}_{u_{2}}\geq \mathtt{x}_{u_{3}}$ and $\mathtt{x}_{v_{2}}>\mathtt{x}_{v_{t}}$,
   using Lemma \ref{lem_prop}, we obtain that $N_{G}(u_{2})\cap N_{G}(u_{3})=\emptyset$.
   Thus we obtain a contradiction since $v_{1}$ is a common neighbour of $u_{2}$ and $u_{3}$.

   \medskip

\noindent{\bf Case 2.} $v_{2}\nsim u_{3}$.

 \medskip

 Arguing similarly to that in Subcase 1.2, we can also obtain a contradiction if $u_{2}\sim v_{3}$.
 So we assume that $u_{2}\nsim v_{3}$.
 Suppose that $N_{G}(v_{2})=\{u_{1},u_{i},u_{j}\}$ and $N_{G}(u_{2})=\{v_{1},v_{s},v_{t}\}$, where $j>i\geq 4$ and $t>s\geq 4$.
 We claim that $\mathtt{x}_{u_{2}}>\mathtt{x}_{u_{j}}$. If not, it follows that $\mathtt{x}_{u_{2}}\leq \mathtt{x}_{u_{j}}\leq \mathtt{x}_{u_{i}}$.
 Thus $\mathtt{x}[N_{G}(v_{2})]\geq \mathtt{x}_{u_{1}}+2\mathtt{x}_{u_{2}}\geq \mathtt{x}[N_{G}(v_{1})]$.
 By Lemma \ref{lem_neig}, we obtain $\mathtt{x}_{v_{2}}\geq \mathtt{x}_{v_{2}}$, contradicting Claim 1.
 This implies that $\mathtt{x}_{u_{2}}>\mathtt{x}_{u_{j}}$.
 Similarly, one can also obtain $\mathtt{x}_{v_{2}}>\mathtt{x}_{v_{t}}$.

 Since $\mathtt{x}_{u_{2}}>\mathtt{x}_{u_{j}}$ and $\mathtt{x}_{v_{2}}\geq \mathtt{x}_{v_{s}}\geq \mathtt{x}_{v_{t}}$,
 it follows from Lemma \ref{lem_adja} that $u_{j}\sim v_{s}$ and $u_{j}\sim v_{t}$.
 Since $\mathtt{x}_{v_{2}}>\mathtt{x}_{v_{t}}$, by Lemma \ref{lem_adja}, we obtain that $v_{t}\sim u_{i}$.

 We claim that $u_{i}\sim v_{s}$.
 Suppose that $u_{i}\nsim v_{s}$. If $\mathtt{x}_{u_{2}}>\mathtt{x}_{u_{i}}$, then it follows from Lemma \ref{lem_adja} that $u_{i}\sim v_{s}$, a contradiction.
 Hence $\mathtt{x}_{u_{2}}=\mathtt{x}_{u_{i}}$. Similarly, one can see that $\mathtt{x}_{v_{2}}=\mathtt{x}_{v_{s}}$.
 It follows that
 $$\mathtt{x}[N_{G}(v_{2})]=\mathtt{x}_{u_{1}}+\mathtt{x}_{u_i}+\mathtt{x}_{u_{j}}
 >2\mathtt{x}_{u_{2}}+\mathtt{x}_{u_{j}}\geq \mathtt{x}[N_{G}(v_{s})].$$
By Lemma \ref{lem_neig}, we have $\mathtt{x}_{v_{2}}>\mathtt{x}_{v_{s}}$, a contradiction.
Therefore $u_{i}\sim v_{s}$.

Suppose now without loss of generality that $\mathtt{x}_{u_{1}}\geq \mathtt{x}_{v_{1}}$.
Lemma \ref{lem_pos} shows that $\mathtt{x}_{u_{1}}>0$ and $\mathtt{x}_{v_{1}}>0$.
Since $\mathtt{x}_{v_{1}}>\mathtt{x}_{v_{2}}$, we can choose a real number $\varepsilon>0$ such that $\mathtt{x}_{v_{1}}>\varepsilon>\mathtt{x}_{v_{2}}$.
It follows from Lemma \ref{lem_con} that $[V(G)]_{\leq \varepsilon}$ is connected.
On the other hand, according to the above adjacency relations, it is easy to see that $\{u_{1},v_{1}\}$
is a vertex cut of $G$, and the vertices $v_{2}$ and $v_{3}$ belong to the different components in $G-\{u_{1},v_{1}\}$.
This implies that any path from $v_{2}$ to $v_{3}$ must pass through either $u_{1}$ or $v_{1}$.
Note that $\{v_{2},v_{3}\}\subseteq [V(G)]_{\leq \varepsilon}$ and $\{u_{1},v_{1}\}\cap [V(G)]_{\leq \varepsilon}=\emptyset$,
as $\mathtt{x}_{u_{1}}\geq \mathtt{x}_{v_{1}}>\varepsilon>\mathtt{x}_{v_{2}}\geq \mathtt{x}_{v_{3}}$.
Then we obtain that there are no paths from $v_{2}$ to $v_{3}$ in the subgraph induced by $[V(G)]_{\leq \varepsilon}$,
a contradiction to the connectedness of $[V(G)]_{\leq \varepsilon}$.
The proof is completed.
\end{proof}

\begin{lemma}\label{lem_v2u3}
$v_{2}\sim u_{3}$.
\end{lemma}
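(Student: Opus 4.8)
The plan is to argue by contradiction: assume $v_2 \nsim u_3$ and derive a forbidden configuration. Since $N_G(u_1)=\{v_1,v_2,v_3\}$ gives $u_1\sim v_2$, and $u_2\sim v_2$ by Lemma~\ref{lem_u2v2}, the third neighbour of $v_2$ is some $u_a$ with $a\ge 4$, so $N_G(v_2)=\{u_1,u_2,u_a\}$. Because $v_1$ and $v_2$ share the neighbours $u_1,u_2$, the identity $(3-a(G))\mathtt{x}_{v_i}=\mathtt{x}[N_G(v_i)]$ from Lemma~\ref{lem_neig} yields $\mathtt{x}_{u_3}-\mathtt{x}_{u_a}=(3-a(G))(\mathtt{x}_{v_1}-\mathtt{x}_{v_2})$; in particular $\mathtt{x}_{u_3}\ge\mathtt{x}_{u_a}$, with equality precisely when $\mathtt{x}_{v_1}=\mathtt{x}_{v_2}$.

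Writing $N_G(u_3)=\{v_1,v_p,v_q\}$, both $v_p,v_q$ have index at least $3$, hence $\mathtt{x}_{v_p},\mathtt{x}_{v_q}\le\mathtt{x}_{v_3}\le\mathtt{x}_{v_2}$. I would then apply Lemma~\ref{lem_adja} to the non-adjacent pair $u_3,v_2$ (taking the two neighbours of $u_3$ to be $v_p,v_q$ and the neighbour $u_a$ of $v_2$) to conclude $u_a\sim v_p$ and $u_a\sim v_q$, so that $N_G(u_a)=\{v_2,v_p,v_q\}$. The point is that Lemma~\ref{lem_adja} always applies here: its hypotheses fail only in the borderline case $\mathtt{x}_{u_3}=\mathtt{x}_{u_a}$ together with $\mathtt{x}_{v_2}=\max\{\mathtt{x}_{v_p},\mathtt{x}_{v_q}\}$, which forces $\mathtt{x}_{v_1}=\mathtt{x}_{v_2}$ (by the relation above) and $\mathtt{x}_{v_2}=\mathtt{x}_{v_3}$ (since $\max\{\mathtt{x}_{v_p},\mathtt{x}_{v_q}\}\le\mathtt{x}_{v_3}\le\mathtt{x}_{v_2}$), whence $\mathtt{x}_{v_1}=\mathtt{x}_{v_2}=\mathtt{x}_{v_3}$, contradicting Lemma~\ref{lem_neq}.

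At this stage $u_3$ and $u_a$ have the common neighbours $v_p,v_q$, while $v_1,v_2$ have the common neighbours $u_1,u_2$, which isolates the block $B=\{v_1,v_2,u_3,u_a,v_p,v_q\}$: the only edges leaving $B$ emanate from $v_1,v_2$ (to $u_1,u_2$) and from $v_p,v_q$ (to their third neighbours), while $u_3,u_a$ are already saturated inside $B$. I would finish by locating the remaining third neighbours --- the third neighbour $w$ of $u_2$, those of $v_p,v_q$, and the position of $v_3$ relative to $\{v_p,v_q\}$. When these close up (for instance $v_3\in\{v_p,v_q\}$ and $w$ the remaining low vertex of $B$), the set $B\cup\{u_1,u_2\}$ induces a cubic bipartite graph on $8$ vertices, a component of the connected graph $G$, contradicting $2n\ge 12$. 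In the remaining configurations I would instead exhibit a threshold set that is disconnected, by choosing $\varepsilon$ so that the saturated block together with the high vertices $\{u_1,u_2\}$ (or $\{u_1,v_1\}$) separates $v_q$ from another low vertex inside $[V(G)]_{\le\varepsilon}$, contradicting the connectedness guaranteed by Lemma~\ref{lem_con}.

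The first two paragraphs are clean, but I expect the main obstacle to be this last step. Because $u_3$ and $u_a$ share both of their low neighbours, no crossing pair of independent edges survives inside $B$, so neither Lemma~\ref{lem_prop} nor Lemma~\ref{lem_trans} can be invoked directly; the contradiction has to be extracted from the placement of the third neighbours and from global connectivity. Keeping the many Fiedler-value equalities under control across these subcases (repeatedly via Lemma~\ref{lem_neig} and Lemma~\ref{lem_neq}) is the delicate bookkeeping that the proof must carry out.
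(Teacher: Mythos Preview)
Your first two paragraphs are correct and in fact slightly cleaner than the paper: you establish $N_G(u_a)=\{v_2,v_p,v_q\}$ in one stroke, whereas the paper redoes this computation inside each of its subcases. The borderline analysis via Lemma~\ref{lem_neq} is exactly right.

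The gap is in the last step, and you have identified it yourself. Your plan to finish with Lemma~\ref{lem_con} is not the paper's route and, as stated, is not a proof: you do not say which threshold~$\varepsilon$ to use, nor why the resulting level set must fail to be connected in the ``remaining configurations''. In fact the paper never invokes Lemma~\ref{lem_con} here; that lemma is used only once, in the proof of Lemma~\ref{lem_u2v2}. The actual finishing mechanism is a short case split on the position of $v_3$ and on the third neighbour of $u_2$:
\begin{itemize}
\item If $u_2\sim v_3$ (so $N_G(v_3)=\{u_1,u_2,u_j\}$ with $j\ne a$), one shows by another application of Lemma~\ref{lem_adja} that $u_j\sim v_p$ and $u_j\sim v_q$; then $\{u_1,u_2,u_3,u_a,u_j,v_1,v_2,v_3,v_p,v_q\}$ induces a cubic bipartite graph on $10$ vertices, contradicting $2n\ge 12$.
\item If $u_2\nsim v_3$ and $u_3\sim v_3$ (so $v_3\in\{v_p,v_q\}$), one first proves that the third neighbour $v_s$ of $u_2$ is adjacent to $u_3$ (a second use of Lemma~\ref{lem_prop}\,(P3), symmetric to your argument for $u_a$); since also $u_a\sim v_s$ by your paragraph, either the $8$-vertex subgraph on $\{u_1,u_2,u_3,u_a,v_1,v_2,v_3,v_s\}$ closes up, or $\{u_2v_s,u_av_3\}$ is an independent pair to which (P3) applies, contradicting $u_3\in N_G(v_3)\cap N_G(v_s)$.
\item If $u_2\nsim v_3$ and $u_3\nsim v_3$, write $N_G(v_3)=\{u_1,u_j,u_k\}$; one more pass with Lemma~\ref{lem_adja} forces $u_j,u_k\sim v_q$, so $v_q$ has four neighbours $u_3,u_a,u_j,u_k$, impossible.
\end{itemize}
Each branch ends with a degree violation or a small closed cubic subgraph, never with a level-set argument. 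Your block $B$ is the right object, but the contradiction is obtained by \emph{saturating} it via further uses of Lemmas~\ref{lem_adja} and~\ref{lem_prop}\,(P3), not by Lemma~\ref{lem_con}.
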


\begin{proof}
  Suppose to the contrary that $v_{2}\nsim u_{3}$. Let $N_{G}(v_{2})=\{u_{1},u_{2},u_{i}\}$ with $i\geq 4$.
  To obtain the contradiction, we will consider the following cases.

  \medskip

\noindent{\bf Case 1.} $u_{2}\sim v_{3}$.

  \medskip

  It is easy to see that $v_{3}\nsim u_{3}$. If not, $v_{3}$ and $v_{1}$ have the same neighbours,
  thus Lemma \ref{lem_neig} leads to that $\mathtt{x}_{v_{1}}=\mathtt{x}_{v_{3}}$, contradicting Lemma \ref{lem_neq}.
  Suppose that $N_{G}(u_{3})=\{v_{1},v_{s},v_{t}\}$, where $t>s\geq 4$.
  We claim that $u_{i}$ is adjacent to both $v_{s}$ and $v_{t}$. Note that $\mathtt{x}_{u_{3}}\geq \mathtt{x}_{u_{i}}$
  and $\mathtt{x}_{v_{2}}\geq \mathtt{x}_{v_{s}}\geq \mathtt{x}_{v_{t}}$.
  If $\mathtt{x}_{u_{3}}>\mathtt{x}_{u_{i}}$, then Lemma \ref{lem_adja} shows that $u_{i}$ is adjacent to both $v_{s}$ and $v_{t}$.
  If $\mathtt{x}_{u_{3}}=\mathtt{x}_{u_{i}}$, then
  $$\mathtt{x}[N_{G}(v_{s})]\leq 3\mathtt{x}_{u_{3}}
  <\mathtt{x}_{u_{1}}+\mathtt{x}_{u_{2}}+\mathtt{x}_{u_{i}}=\mathtt{x}[N_{G}(v_{2})].$$
  Then it follows from Lemma \ref{lem_neig} that $\mathtt{x}_{v_{2}}>\mathtt{x}_{v_{s}}\geq \mathtt{x}_{v_{t}}$.
  Again, Lemma \ref{lem_adja} shows that $u_{i}$ is adjacent to both $v_{s}$ and $v_{t}$. Hence $v_{3}\nsim u_{i}$.
  Suppose that $N_{G}(v_{3})=\{u_{1},u_{2},u_{j}\}$ where $j\neq i$ and $j\geq 4$.
  Since $\mathtt{x}_{v_{1}}>\mathtt{x}_{v_{3}}$, by Lemma \ref{lem_neig}, we have $\mathtt{x}[N_{G}(v_{1})]>\mathtt{x}[N_{G}(v_{3})]$,
  hence $\mathtt{x}_{u_{3}}>\mathtt{x}_{u_{j}}$. Note also that $\mathtt{x}_{v_{3}}\geq \mathtt{x}_{v_{s}}\geq \mathtt{x}_{v_{t}}$.
  Using Lemma \ref{lem_adja}, we obtain that $u_{j}\sim v_{s}$ and $u_{j}\sim v_{t}$.
  Thus the subgraph of $G$ induced by $\{u_{1},u_{2},u_{3},u_{i},u_{j},v_{1},v_{2},v_{3},v_{s},v_{t}\}$ is a cubic bipartite graph, a contradiction.

  \medskip

\noindent{\bf Case 2.} $u_{2}\nsim v_{3}$.

  \medskip

  Suppose that $N_{G}(u_{2})=\{v_{1},v_{2},v_{s}\}$ with $s\geq 4$.

  \medskip

\noindent{\bf Subcase 2.1.} $u_{3}\sim v_{3}$.

  \medskip

  We first prove that $v_{3}\sim u_{i}$. Suppose to the contrary that $v_{3}\nsim u_{i}$.
  Then $\{u_{3}v_{3},u_{i}v_{2}\}$ forms a pair of independent edges in $G$.
  Clearly, $\mathtt{x}_{u_{3}}\geq \mathtt{x}_{u_{i}}$ and $\mathtt{x}_{v_{2}}\geq \mathtt{x}_{v_{3}}$.
  If either $\mathtt{x}_{u_{3}}\neq \mathtt{x}_{u_{i}}$ or $\mathtt{x}_{v_{2}}\neq \mathtt{x}_{v_{3}}$,
  then it follows form Lemma \ref{lem_prop} (P3) that $N_{G}(v_{2})\cap N_{G}(v_{3})=\emptyset$,
  which contradicts the fact that $u_{1}\in N_{G}(v_{2})\cap N_{G}(v_{3})$.
  Hence $\mathtt{x}_{u_{3}}=\mathtt{x}_{u_{i}}$ and $\mathtt{x}_{v_{2}}=\mathtt{x}_{v_{3}}$.
  Since $\mathtt{x}_{u_{3}}=\mathtt{x}_{u_{i}}$, it follows from Lemma \ref{lem_neig} that $\mathtt{x}_{v_{1}}=\mathtt{x}_{v_{2}}=\mathtt{x}_{v_{3}}$,
  a contradiction with Lemma \ref{lem_neq}.
  Hence $v_{3}\sim u_{i}$.
  Similarly, one can obtain that $u_{3}\sim v_{s}$.

  If $u_{i}\sim v_{s}$, then the subgraph of $G$ induced by $\{u_{1},u_{2},u_{3},u_{i},v_{1},v_{2},v_{3},v_{s}\}$ is a cubic bipartite graph, a contradiction.
  This implies that $u_{i}\nsim v_{s}$. Obviously, $\{u_{2}v_{s},u_{i}v_{3}\}$ is a pair of independent edges in $G$.
  Note that $\mathtt{x}[N_{G}(v_{1})]=\mathtt{x}_{u_{1}}+\mathtt{x}_{u_{2}}+\mathtt{x}_{u_{3}}$
  and $\mathtt{x}[N_{G}(v_{3})]=\mathtt{x}_{u_{1}}+\mathtt{x}_{u_{3}}+\mathtt{x}_{u_{i}}$.
  Since $\mathtt{x}_{v_{1}}>\mathtt{x}_{v_{3}}$, it follows from Lemma \ref{lem_neig} that $\mathtt{x}_{u_{2}}>\mathtt{x}_{u_{i}}$.
  Using Lemma \ref{lem_prop} (P3), we have $N_{G}(v_{3})\cap N_{G}(v_{s})=\emptyset$.
  But $u_{3}$ is a common neighbour of $v_{3}$ and $v_{s}$, a contradiction again.

  \medskip

\noindent{\bf Subcase 2.2.} $u_{3}\nsim v_{3}$.

  \medskip

  Let $N_{G}(u_{3})=\{v_{1},v_{s},v_{t}\}$ with $t>s\geq 4$.
  We will show that $u_{i}$ is adjacent to both $v_{s}$ and $v_{t}$.
  Note that $\mathtt{x}_{u_{3}}\geq \mathtt{x}_{u_{i}}$ and $\mathtt{x}_{v_{2}}\geq \mathtt{x}_{v_{s}}\geq \mathtt{x}_{v_{t}}$.
  If $\mathtt{x}_{u_{3}}>\mathtt{x}_{u_{i}}$, then Lemma \ref{lem_adja} shows that $u_{i}\sim v_{s}$ and $u_{i}\sim v_{t}$.
  Assume that $\mathtt{x}_{u_{3}}=\mathtt{x}_{u_{i}}$.
  Then we have
  $$\mathtt{x}[N_{G}(v_{1})]=\mathtt{x}_{u_{1}}+\mathtt{x}_{u_{2}}+\mathtt{x}_{u_{3}}
  =\mathtt{x}_{u_{1}}+\mathtt{x}_{u_{2}}+\mathtt{x}_{u_{i}}=\mathtt{x}[N_{G}(v_{2})].$$
  According to Lemma \ref{lem_neig},
  it follows that $\mathtt{x}_{v_{1}}=\mathtt{x}_{v_{2}}$. Combining with Lemma \ref{lem_neq}, we obtain that $\mathtt{x}_{v_{2}}>\mathtt{x}_{v_{3}}$,
  hence $\mathtt{x}_{v_{2}}>\mathtt{x}_{v_{s}}\geq \mathtt{x}_{v_{t}}$.
  Using Lemma \ref{lem_adja}, we also obtain that $u_{i}\sim v_{s}$ and $u_{i}\sim v_{t}$.

  We may assume that $N_{G}(v_{3})=\{u_{1},u_{j},u_{k}\}$ with $k>j\geq 4$. Clearly, $i\neq j$ and $i\neq k$.
  Note that $\mathtt{x}_{u_{3}}\geq \mathtt{x}_{u_{j}}\geq \mathtt{x}_{u_{k}}$ and $\mathtt{x}_{v_{3}}\geq \mathtt{x}_{v_{t}}$.
  If $\mathtt{x}_{v_{3}}\geq \mathtt{x}_{v_{t}}$, then Lemma \ref{lem_adja} shows that $v_{t}\sim u_{j}$ and $u_{t}\sim u_{k}$.
  Suppose now that $\mathtt{x}_{v_{3}}=\mathtt{x}_{v_{t}}$.
  Hence $$\mathtt{x}[N_{G}(u_{3})]=\mathtt{x}_{v_{1}}+\mathtt{x}_{v_{s}}+\mathtt{x}_{v_{t}}
  =\mathtt{x}_{v_{1}}+2\mathtt{x}_{v_{3}}>3\mathtt{x}_{v_{3}}\geq \mathtt{x}[N_{G}(u_{j})].$$
  It follows from Lemma \ref{lem_neig} that $\mathtt{x}_{u_{3}}>\mathtt{x}_{u_{j}}$.
  According to Lemma \ref{lem_adja}, one can see that $v_{t}\sim u_{j}$ and $u_{t}\sim u_{k}$.

  Therefore, $\{u_{3},u_{i},u_{j},u_{k}\}\subseteq N_{G}(v_{t})$. But this contradicts the degree of $v_{t}$.
\end{proof}

\begin{lemma}\label{lem_u2v3}
  $u_{2}\sim v_{3}$.
\end{lemma}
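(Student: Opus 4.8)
The plan is to argue by contradiction: assume $u_{2}\nsim v_{3}$ and derive an impossible configuration. First I would pin down the local structure around the two top vertices $u_{1},v_{1}$. Since $u_{1}\sim v_{2}$, $u_{2}\sim v_{2}$ (Lemma \ref{lem_u2v2}) and $u_{3}\sim v_{2}$ (Lemma \ref{lem_v2u3}), and $v_{2}$ has degree $3$, we get $N_{G}(v_{2})=\{u_{1},u_{2},u_{3}\}=N_{G}(v_{1})$; Lemma \ref{lem_neig} then gives $\mathtt{x}_{v_{1}}=\mathtt{x}_{v_{2}}$, and Lemma \ref{lem_neq} upgrades this to $\mathtt{x}_{v_{1}}=\mathtt{x}_{v_{2}}>\mathtt{x}_{v_{3}}$. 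If $u_{3}\sim v_{3}$ held, then $N_{G}(u_{3})=\{v_{1},v_{2},v_{3}\}=N_{G}(u_{1})$, and Lemma \ref{lem_neig} would force $\mathtt{x}_{u_{1}}=\mathtt{x}_{u_{3}}$, contradicting Lemma \ref{lem_neq}; hence $u_{3}\nsim v_{3}$. Thus the edges leaving the core $\{u_{1},u_{2},u_{3},v_{1},v_{2}\}$ are exactly $u_{1}v_{3}$, $u_{2}v_{s}$, $u_{3}v_{t}$ with $s,t\geq 4$, while $N_{G}(v_{3})=\{u_{1},u_{c},u_{d}\}$ with $c,d\geq 4$; in particular $\mathtt{x}_{u_{c}},\mathtt{x}_{u_{d}}\leq \mathtt{x}_{u_{3}}\leq \mathtt{x}_{u_{2}}$.

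The decisive split is on the values of the two lower neighbours $u_{c},u_{d}$ of $v_{3}$. Suppose first that one of them, say $u_{d}$, satisfies $\mathtt{x}_{u_{d}}\geq \mathtt{x}_{u_{2}}$. Writing $v_{p}$ for a neighbour of $u_{d}$ other than $v_{3}$, we have $v_{p}\notin\{v_{1},v_{2}\}$, so $\mathtt{x}_{v_{p}}\leq \mathtt{x}_{v_{3}}<\mathtt{x}_{v_{1}}$. Then the non-adjacent pair $u_{d},v_{1}$, together with $u_{d}\sim v_{3}$, $u_{d}\sim v_{p}$ and $u_{2}\sim v_{1}$, satisfies the hypotheses of Lemma \ref{lem_adja}(2), namely $\mathtt{x}_{u_{d}}\geq \mathtt{x}_{u_{2}}$ and $\mathtt{x}_{v_{1}}>\max\{\mathtt{x}_{v_{3}},\mathtt{x}_{v_{p}}\}$, and its conclusion gives $u_{2}\sim v_{3}$, contradicting our assumption.

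It remains to treat the generic case $\mathtt{x}_{u_{c}},\mathtt{x}_{u_{d}}<\mathtt{x}_{u_{2}}$, which I would close by exhibiting a forbidden small component. If $u_{c}\nsim v_{s}$, then $\{u_{2}v_{s},u_{c}v_{3}\}$ is a pair of independent edges with $\mathtt{x}_{u_{2}}>\mathtt{x}_{u_{c}}$ and $\mathtt{x}_{v_{s}}\leq \mathtt{x}_{v_{3}}$, so Lemma \ref{lem_prop}(P5) yields $\mathtt{x}_{u_{d}}\geq \mathtt{x}_{u_{2}}$, contradicting $\mathtt{x}_{u_{d}}<\mathtt{x}_{u_{2}}$; hence $u_{c}\sim v_{s}$, and symmetrically $u_{d}\sim v_{s}$, so $N_{G}(v_{s})=\{u_{2},u_{c},u_{d}\}$ (which also forces $s\neq t$, as otherwise $u_{3}$ would be a fourth neighbour of $v_{s}$). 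Running the same argument with the edge $u_{3}v_{t}$ in place of $u_{2}v_{s}$, which is legitimate since $u_{3}\nsim v_{3}$ and $u_{3}\nsim v_{s}$, forces $N_{G}(v_{t})=\{u_{3},u_{c},u_{d}\}$ as well. At that point $N_{G}(u_{c})=N_{G}(u_{d})=\{v_{3},v_{s},v_{t}\}$, and the ten vertices $\{u_{1},u_{2},u_{3},u_{c},u_{d}\}\cup\{v_{1},v_{2},v_{3},v_{s},v_{t}\}$ induce a $3$-regular bipartite graph all of whose edges stay inside the set, i.e. a connected component of $G$; since $G$ is connected with $2n\geq 12$ vertices, this is impossible.

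The hard part will be the bookkeeping of value-ties in the last step: when $\mathtt{x}_{u_{c}}$ or $\mathtt{x}_{u_{d}}$ equals $\mathtt{x}_{u_{3}}$, the strict inequality needed to invoke Lemma \ref{lem_prop} on the pair built from $u_{3}v_{t}$ fails. In each such borderline situation I would instead apply Lemma \ref{lem_adja}(2) with source vertex $u_{c}$ (whose neighbours $v_{3},v_{s}$ both have value at most $\mathtt{x}_{v_{3}}<\mathtt{x}_{v_{1}}$), nonadjacent vertex $v_{1}$, and target $u_{3}\sim v_{1}$: from $\mathtt{x}_{u_{c}}\geq \mathtt{x}_{u_{3}}$ and $\mathtt{x}_{v_{1}}>\max\{\mathtt{x}_{v_{3}},\mathtt{x}_{v_{s}}\}$ the conclusion forces the edge $u_{3}\sim v_{3}$, which we have already excluded. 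In this way every branch terminates either in the forbidden edge $u_{2}\sim v_{3}$, or in the forbidden edge $u_{3}\sim v_{3}$, or in the $10$-vertex component, so the assumption $u_{2}\nsim v_{3}$ is untenable and the lemma follows.
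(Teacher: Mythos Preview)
Your argument is correct, but you have missed a one-line shortcut that the paper exploits. The entire setup---conditions (1)--(4) on the labelling, Lemma~\ref{lem_neq}, Lemma~\ref{lem_u2v2}---is symmetric under the interchange $u\leftrightarrow v$. Once Lemma~\ref{lem_v2u3} establishes $v_{2}\sim u_{3}$, the hypothesis ``$u_{2}\nsim v_{3}$ and $v_{2}\sim u_{3}$'' is precisely the $u\leftrightarrow v$ swap of ``$v_{2}\nsim u_{3}$ and $u_{2}\sim v_{3}$,'' which is exactly Case~1 in the proof of Lemma~\ref{lem_v2u3}. The paper therefore simply says: interchange $u$ and $v$ and repeat Case~1 of Lemma~\ref{lem_v2u3} to reach a contradiction.

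What you have done is essentially reconstruct that Case~1 argument from scratch in the swapped variables, with slightly different bookkeeping (you lean on Lemma~\ref{lem_prop}(P5) where the paper uses Lemma~\ref{lem_adja}, and you add an explicit tie-breaking branch for $\mathtt{x}_{u_{c}}=\mathtt{x}_{u_{3}}$). Both routes terminate in the same $10$-vertex cubic bipartite component, so your proof is sound; it just duplicates work that symmetry makes unnecessary. The only cost of the paper's approach is that the reader must check the setup really is $u\leftrightarrow v$-symmetric, which it is.
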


\begin{proof}
    By contradiction. Suppose that $u_{2}\nsim v_{3}$. Lemma \ref{lem_v2u3} shows that $v_{2}\sim u_{3}$.
    After interchanging the signs $u$ and $v$, one can obtain a contradiction by the same argument as in Case 1 of Lemma  \ref{lem_v2u3}.
  \end{proof}

  \begin{lemma}
    $\mathtt{x}_{u_{1}}=\mathtt{x}_{u_{2}}>\mathtt{x}_{u_{3}}$ and $\mathtt{x}_{v_{1}}=\mathtt{x}_{v_{2}}>\mathtt{x}_{v_{3}}$.
  \end{lemma}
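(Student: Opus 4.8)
The plan is to read off the complete adjacency pattern among the six vertices $\{u_1,u_2,u_3,v_1,v_2,v_3\}$ from the three immediately preceding lemmas, and then to convert that combinatorial fact into eigenvector equalities via Lemma~\ref{lem_neig}. First I would record what is already fixed by the labeling: $N_G(u_1)=\{v_1,v_2,v_3\}$ and $N_G(v_1)=\{u_1,u_2,u_3\}$, so in particular $u_2\sim v_1$, $u_3\sim v_1$, $v_2\sim u_1$ and $v_3\sim u_1$. To these I add the three new adjacencies: Lemma~\ref{lem_u2v2} gives $u_2\sim v_2$, Lemma~\ref{lem_u2v3} gives $u_2\sim v_3$, and Lemma~\ref{lem_v2u3} gives $u_3\sim v_2$. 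Since every vertex of $G$ has degree exactly $3$, the three edges at $u_2$ (namely to $v_1,v_2,v_3$) and the three edges at $v_2$ (namely to $u_1,u_2,u_3$) are now saturated, so $N_G(u_2)=\{v_1,v_2,v_3\}=N_G(u_1)$ and $N_G(v_2)=\{u_1,u_2,u_3\}=N_G(v_1)$.

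With the neighborhoods coinciding, the equalities follow at once. Because $N_G(u_1)=N_G(u_2)$ we have $\mathtt{x}[N_G(u_1)]=\mathtt{x}[N_G(u_2)]$, so Lemma~\ref{lem_neig}(i) yields $\mathtt{x}_{u_1}=\mathtt{x}_{u_2}$; the identical argument applied to $N_G(v_1)=N_G(v_2)$ gives $\mathtt{x}_{v_1}=\mathtt{x}_{v_2}$. For the strict inequalities I would invoke Lemma~\ref{lem_neq}, which asserts $\mathtt{x}_{u_1}\neq\mathtt{x}_{u_3}$ and $\mathtt{x}_{v_1}\neq\mathtt{x}_{v_3}$; combining these with the monotone labeling $\mathtt{x}_{u_1}\geq\mathtt{x}_{u_3}$ and $\mathtt{x}_{v_1}\geq\mathtt{x}_{v_3}$ forces $\mathtt{x}_{u_1}>\mathtt{x}_{u_3}$ and $\mathtt{x}_{v_1}>\mathtt{x}_{v_3}$. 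Substituting the two equalities then delivers $\mathtt{x}_{u_1}=\mathtt{x}_{u_2}>\mathtt{x}_{u_3}$ and $\mathtt{x}_{v_1}=\mathtt{x}_{v_2}>\mathtt{x}_{v_3}$, which is exactly the claim.

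At this stage there is essentially no obstacle remaining: all of the genuine difficulty has already been absorbed into Lemmas~\ref{lem_u2v2}, \ref{lem_v2u3}, \ref{lem_u2v3} and \ref{lem_neq}, whose proofs carried out the delicate edge-switching and Fiedler-vector estimates. The present statement is really a bookkeeping corollary of those results. The only point demanding a moment's care is the appeal to the degree constraint: one must observe that $u_2$ (respectively $v_2$) now has three \emph{distinct} neighbors among $\{v_1,v_2,v_3\}$ (respectively $\{u_1,u_2,u_3\}$), so no further edge is possible and the neighborhoods genuinely coincide rather than merely one containing the other; this is what lets Lemma~\ref{lem_neig}(i) produce an equality rather than only an inequality.
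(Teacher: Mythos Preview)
Your proof is correct and follows essentially the same approach as the paper: combine Lemmas~\ref{lem_u2v2}, \ref{lem_v2u3}, \ref{lem_u2v3} with the degree-$3$ constraint to conclude $N_G(u_1)=N_G(u_2)$ and $N_G(v_1)=N_G(v_2)$, apply Lemma~\ref{lem_neig} for the equalities, and finish with Lemma~\ref{lem_neq} for the strict inequalities. Your write-up is actually more explicit than the paper's, which simply refers to the figure for the adjacency pattern.
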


  \begin{proof}
    Combining Lemmas \ref{lem_u2v2}, \ref{lem_v2u3} and \ref{lem_u2v3},
    one can see that the adjacency relations of the vertices $\{u_{1},u_{2},u_{3},v_{1},v_{2},v_{3}\}$ are as presented in Figure \ref{fig_main}.
    Note that $u_{1}$ and $u_{2}$ have the same neighbours. According to Lemma \ref{lem_neig}, it follows that $\mathtt{x}_{u_{1}}=\mathtt{x}_{u_{2}}$.
    By Lemma \ref{lem_neq} we obtain $\mathtt{x}_{u_{1}}=\mathtt{x}_{u_{2}}>\mathtt{x}_{u_{3}}$.
    Similarly, we can see that $\mathtt{x}_{v_{1}}=\mathtt{x}_{v_{2}}>\mathtt{x}_{v_{3}}$.
  \end{proof}

  \begin{lemma}\label{lem_v3u4}
    $v_{3}\sim u_{4}$ and $\mathtt{x}_{u_{4}}>\mathtt{x}_{u_{5}}$.
  \end{lemma}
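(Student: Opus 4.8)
The plan is to pin down the third neighbour of $v_{3}$, to identify it as $u_{4}$, and finally to separate $\mathtt{x}_{u_{4}}$ from $\mathtt{x}_{u_{5}}$, using throughout the structure already obtained in the preceding lemma: $N_{G}(u_{1})=N_{G}(u_{2})=\{v_{1},v_{2},v_{3}\}$, $N_{G}(v_{1})=N_{G}(v_{2})=\{u_{1},u_{2},u_{3}\}$, together with $\mathtt{x}_{u_{1}}=\mathtt{x}_{u_{2}}>\mathtt{x}_{u_{3}}$ and $\mathtt{x}_{v_{1}}=\mathtt{x}_{v_{2}}>\mathtt{x}_{v_{3}}$. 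First I would show that $N_{G}(v_{3})=\{u_{1},u_{2},u_{k}\}$ with $k\geq 4$. Indeed $v_{3}\sim u_{1},u_{2}$, while $v_{3}\sim u_{3}$ is impossible: it would give $N_{G}(v_{3})=N_{G}(v_{1})$, hence $\mathtt{x}[N_{G}(v_{3})]=\mathtt{x}[N_{G}(v_{1})]$ and so $\mathtt{x}_{v_{3}}=\mathtt{x}_{v_{1}}$ by Lemma~\ref{lem_neig}, contradicting $\mathtt{x}_{v_{1}}>\mathtt{x}_{v_{3}}$. Thus the third neighbour $u_{k}$ of $v_{3}$ lies in $U\setminus\{u_{1},u_{2},u_{3}\}$.

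To obtain $v_{3}\sim u_{4}$, let $u_{4}$ denote a vertex of largest Fiedler entry in $U\setminus\{u_{1},u_{2},u_{3}\}$ and suppose $v_{3}\nsim u_{4}$. Since also $u_{4}\nsim v_{1},v_{2}$, every neighbour of $u_{4}$ lies in $\{v_{4},\dots,v_{n}\}$ and therefore has entry at most $\mathtt{x}_{v_{4}}\leq\mathtt{x}_{v_{3}}$. If $\mathtt{x}_{u_{4}}>\mathtt{x}_{u_{k}}$, then applying Lemma~\ref{lem_adja}(1) with $(u,v,u')=(u_{4},v_{3},u_{k})$ to each pair of neighbours of $u_{4}$ shows that $u_{k}$ is adjacent to all three neighbours of $u_{4}$, so $N_{G}(u_{k})=N_{G}(u_{4})$ and hence $\mathtt{x}_{u_{k}}=\mathtt{x}_{u_{4}}$ by Lemma~\ref{lem_neig}, a contradiction. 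Consequently $\mathtt{x}_{u_{k}}=\mathtt{x}_{u_{4}}$, i.e.\ $u_{k}$ also attains the maximum entry in $U\setminus\{u_{1},u_{2},u_{3}\}$; relabelling within this tie (which respects conditions (1)--(4)) we may take $u_{k}=u_{4}$, so that $v_{3}\sim u_{4}$ and $N_{G}(v_{3})=\{u_{1},u_{2},u_{4}\}$.

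For the strict inequality $\mathtt{x}_{u_{4}}>\mathtt{x}_{u_{5}}$ I would first record that $\mathtt{x}_{v_{3}}>\mathtt{x}_{v_{4}}$. If instead $\mathtt{x}_{v_{3}}=\mathtt{x}_{v_{4}}$, then Lemma~\ref{lem_neig} gives $\mathtt{x}[N_{G}(v_{4})]=\mathtt{x}[N_{G}(v_{3})]=2\mathtt{x}_{u_{1}}+\mathtt{x}_{u_{4}}$; but $v_{4}\nsim u_{1},u_{2}$ forces $N_{G}(v_{4})\subseteq\{u_{3},u_{4},\dots\}$, so $\mathtt{x}[N_{G}(v_{4})]\leq\mathtt{x}_{u_{3}}+\mathtt{x}_{u_{4}}+\mathtt{x}_{u_{5}}$, whence $2\mathtt{x}_{u_{1}}\leq\mathtt{x}_{u_{3}}+\mathtt{x}_{u_{5}}$, which is impossible since $\mathtt{x}_{u_{1}}>\mathtt{x}_{u_{3}}\geq\mathtt{x}_{u_{5}}$. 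With $\mathtt{x}_{v_{3}}>\mathtt{x}_{v_{4}}$ in hand, suppose $\mathtt{x}_{u_{4}}=\mathtt{x}_{u_{5}}$. Since $u_{5}\nsim v_{3}$ and every neighbour of $u_{5}$ has entry at most $\mathtt{x}_{v_{4}}<\mathtt{x}_{v_{3}}$, Lemma~\ref{lem_adja}(2) applied with $(u,v,u')=(u_{5},v_{3},u_{4})$ to each pair of neighbours of $u_{5}$ yields $N_{G}(u_{5})\subseteq N_{G}(u_{4})$, hence $N_{G}(u_{4})=N_{G}(u_{5})$; this is absurd because $v_{3}\in N_{G}(u_{4})\setminus N_{G}(u_{5})$. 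Therefore $\mathtt{x}_{u_{4}}>\mathtt{x}_{u_{5}}$.

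The only delicate point is the handling of ties, since the clean swap arguments through Lemma~\ref{lem_adja} require strict inequalities: the crux is excluding $\mathtt{x}_{v_{3}}=\mathtt{x}_{v_{4}}$ (and, in the first step, $\mathtt{x}_{u_{4}}=\mathtt{x}_{u_{k}}$). These are dispatched not by a swap but by the neighbour-sum identity of Lemma~\ref{lem_neig} together with the fact that the two top $U$-entries coincide and strictly dominate $\mathtt{x}_{u_{3}}$; this is exactly what makes $2\mathtt{x}_{u_{1}}$ too large to be matched by any three entries drawn from $\{u_{3},u_{4},\dots\}$, forcing the strict separations.
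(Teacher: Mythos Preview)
Your argument is correct in spirit and uses the same key ingredients as the paper (Lemma~\ref{lem_adja} together with the neighbour-sum identity of Lemma~\ref{lem_neig}), but there is one small logical slip. In the first step, when $\mathtt{x}_{u_{4}}>\mathtt{x}_{u_{k}}$ and you deduce that $u_{k}$ is adjacent to all three neighbours of $u_{4}$, you cannot conclude $N_{G}(u_{k})=N_{G}(u_{4})$: indeed $v_{3}\in N_{G}(u_{k})\setminus N_{G}(u_{4})$. The correct (and simpler) contradiction is that $u_{k}$ then has at least four neighbours, violating cubicity. With this fix your proof goes through.

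The paper organises the argument more economically: rather than first relabelling to secure $v_{3}\sim u_{4}$ and then separately proving $\mathtt{x}_{u_{4}}>\mathtt{x}_{u_{5}}$, it lets $u_{i}$ be the third neighbour of $v_{3}$ and proves in one stroke that $\mathtt{x}_{u_{i}}>\mathtt{x}_{u_{j}}$ for every $j\geq 4$ with $j\neq i$. The tie case $\mathtt{x}_{u_{j}}=\mathtt{x}_{u_{i}}$ is handled exactly as you handle $\mathtt{x}_{u_{4}}=\mathtt{x}_{u_{5}}$: one first forces $\mathtt{x}_{v_{3}}>\mathtt{x}_{v_{s}}$ for any neighbour $v_{s}$ of $u_{j}$ via the inequality $\mathtt{x}[N_{G}(v_{3})]=2\mathtt{x}_{u_{1}}+\mathtt{x}_{u_{i}}>2\mathtt{x}_{u_{3}}+\mathtt{x}_{u_{j}}\geq\mathtt{x}[N_{G}(v_{s})]$, and then Lemma~\ref{lem_adja}(2) gives $N_{G}(u_{j})\subseteq N_{G}(u_{i})$, contradicting $v_{3}\in N_{G}(u_{i})$. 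This single argument yields both $v_{3}\sim u_{4}$ and $\mathtt{x}_{u_{4}}>\mathtt{x}_{u_{5}}$ simultaneously, without your intermediate step establishing $\mathtt{x}_{v_{3}}>\mathtt{x}_{v_{4}}$ or any relabelling.
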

  \begin{proof}
    Now there is only one undetermined neighbour of $v_{3}$. We may assume that $N_{G}(v_{3})=\{u_{1},u_{2},u_{i}\}$ with $i\geq 4$.
    It suffices to show that $\mathtt{x}_{u_{i}}>\max\{\mathtt{x}_{u_{j}}:j\geq 4~\text{and}~j\neq i\}$.

    Suppose to the contrary that $\mathtt{x}_{u_{j}}\geq \mathtt{x}_{u_{i}}$ where $j\geq 4$ and $j\neq i$.
    Clearly, $u_{j}\nsim v_{3}$. Assume that $N_{G}(u_{j})=\{v_{s},v_{t},v_{k}\}$ with $k>t>s\geq 4$.
    If $\mathtt{x}_{u_{j}}>\mathtt{x}_{u_{i}}$, then it follows from Lemma \ref{lem_adja} that $\{v_{s},v_{t},v_{k}\}\subseteq N_{G}(u_{i})$.
    However, this contradicts the fact that $G$ is cubic. Suppose now that $\mathtt{x}_{u_{j}}=\mathtt{x}_{u_{i}}$.
    Hence $$\mathtt{x}[N_{G}(v_{3})]=\mathtt{x}_{u_{1}}+\mathtt{x}_{u_{2}}+\mathtt{x}_{u_{i}}
    >2\mathtt{x}_{u_{3}}+\mathtt{x}_{u_{j}}\geq \mathtt{x}[N_{G}(v_{s})].$$
    According to Lemma \ref{lem_neig}, we have $\mathtt{x}_{v_{3}}>\mathtt{x}_{v_{s}}$,
    that is, $\mathtt{x}_{v_{3}}>\mathtt{x}_{v_{s}}\geq \mathtt{x}_{v_{t}}\geq \mathtt{x}_{v_{k}}$.
    Using Lemma \ref{lem_adja}, we also obtain that $\{v_{s},v_{t},v_{k}\}\subseteq N_{G}(u_{i})$, a contradiction again.
  \end{proof}

  Therefore, arguing similarly to that in the proof of Lemma \ref{lem_v3u4}, we obtain the following lemma.

  \begin{lemma}\label{lem_u3v4}
    $u_{3}\sim v_{4}$ and $\mathtt{x}_{v_{4}}>\mathtt{x}_{v_{5}}$.
  \end{lemma}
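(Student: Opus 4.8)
The plan is to run the argument of Lemma \ref{lem_v3u4} verbatim after interchanging the roles of the two parts $U$ and $V$. First I would pin down the neighbourhood of $u_{3}$: we already know $u_{3}\sim v_{1}$ (from $N_{G}(v_{1})=\{u_{1},u_{2},u_{3}\}$) and $u_{3}\sim v_{2}$ (Lemma \ref{lem_v2u3}), while Lemma \ref{lem_v3u4} gives $N_{G}(v_{3})=\{u_{1},u_{2},u_{4}\}$ and hence $u_{3}\nsim v_{3}$. Thus $u_{3}$ has exactly one undetermined neighbour $v_{i}$ with $i\geq 4$, i.e. $N_{G}(u_{3})=\{v_{1},v_{2},v_{i}\}$. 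Since the entries are ordered with $\mathtt{x}_{v_{1}}=\mathtt{x}_{v_{2}}>\mathtt{x}_{v_{3}}\geq \mathtt{x}_{v_{4}}\geq\cdots$, it suffices to prove the single inequality $\mathtt{x}_{v_{i}}>\max\{\mathtt{x}_{v_{j}}:j\geq 4,\ j\neq i\}$: this forces $v_{i}$ to be top-ranked among $\{v_{j}:j\geq 4\}$, i.e. $i=4$ (so $u_{3}\sim v_{4}$) and $\mathtt{x}_{v_{4}}>\mathtt{x}_{v_{5}}$, simultaneously.

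To prove this inequality I would argue by contradiction and assume $\mathtt{x}_{v_{j}}\geq \mathtt{x}_{v_{i}}$ for some $j\geq 4$ with $j\neq i$. Then $v_{j}\nsim u_{3}$, and I write $N_{G}(v_{j})=\{u_{s},u_{t},u_{k}\}$ with $k>t>s\geq 4$. The pair $(v_{j},u_{3})$ is nonadjacent and in different parts, $v_{j}$ has the neighbours $u_{s},u_{t},u_{k}$, and $v_{i}\sim u_{3}$, so Lemma \ref{lem_adja} is set up with $v_{j}$ in the role of $u$, with $u_{3}$ in the role of $v$, and with $v_{i}$ in the role of $u'$. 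If $\mathtt{x}_{v_{j}}>\mathtt{x}_{v_{i}}$, then because $\mathtt{x}_{u_{3}}\geq \max\{\mathtt{x}_{u_{s}},\mathtt{x}_{u_{t}},\mathtt{x}_{u_{k}}\}$ (as $s,t,k\geq 4$), applying part (1) of Lemma \ref{lem_adja} to each pair of neighbours of $v_{j}$ forces $\{u_{s},u_{t},u_{k}\}\subseteq N_{G}(v_{i})$; together with $u_{3}\in N_{G}(v_{i})$ this gives $v_{i}$ degree at least $4$, contradicting that $G$ is cubic.

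The remaining equality case $\mathtt{x}_{v_{j}}=\mathtt{x}_{v_{i}}$ is where the only genuine work lies, since part (1) needs a strict inequality; the fix is to manufacture strictness on the $U$-side through Lemma \ref{lem_neig}. Here I would use that $N_{G}(v_{1})=N_{G}(v_{2})=\{u_{1},u_{2},u_{3}\}$, so the vertex $u_{s}$ (with $s\geq 4$) is adjacent to neither $v_{1}$ nor $v_{2}$; consequently the two neighbours of $u_{s}$ other than $v_{j}$ each have $\mathtt{x}$-value at most $\mathtt{x}_{v_{3}}$, whence $\mathtt{x}[N_{G}(u_{s})]\leq 2\mathtt{x}_{v_{3}}+\mathtt{x}_{v_{j}}$. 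Using $\mathtt{x}_{v_{1}}=\mathtt{x}_{v_{2}}>\mathtt{x}_{v_{3}}$ and $\mathtt{x}_{v_{i}}=\mathtt{x}_{v_{j}}$ I then obtain $$\mathtt{x}[N_{G}(u_{3})]=\mathtt{x}_{v_{1}}+\mathtt{x}_{v_{2}}+\mathtt{x}_{v_{i}}>2\mathtt{x}_{v_{3}}+\mathtt{x}_{v_{j}}\geq \mathtt{x}[N_{G}(u_{s})],$$ so Lemma \ref{lem_neig} yields $\mathtt{x}_{u_{3}}>\mathtt{x}_{u_{s}}\geq \mathtt{x}_{u_{t}}\geq \mathtt{x}_{u_{k}}$. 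Now part (2) of Lemma \ref{lem_adja} applies, since its hypothesis $\mathtt{x}_{v_{j}}\geq \mathtt{x}_{v_{i}}$ holds (with equality) and $\mathtt{x}_{u_{3}}$ strictly dominates the relevant $U$-entries, again giving $\{u_{s},u_{t},u_{k}\}\subseteq N_{G}(v_{i})$ and the same degree contradiction. The main obstacle, exactly as in the template lemma, is this equality case: one must invoke the correct version of Lemma \ref{lem_adja} and supply the strict inequality $\mathtt{x}_{u_{3}}>\mathtt{x}_{u_{s}}$ from the neighbourhood-sum identity of Lemma \ref{lem_neig} rather than from the vertex ordering alone.
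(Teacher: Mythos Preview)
Your proposal is correct and is precisely the symmetric version of the proof of Lemma~\ref{lem_v3u4}, which is exactly what the paper intends when it says ``arguing similarly to that in the proof of Lemma~\ref{lem_v3u4}, we obtain the following lemma.'' You have made the role-swap $U\leftrightarrow V$ explicit and correctly identified the only delicate point (the equality case handled via Lemma~\ref{lem_neig} and part~(2) of Lemma~\ref{lem_adja}); nothing further is needed.
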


  Let us consider the adjacency relations for the subsequent vertices. For a given positive integer $k$, we define
  $$[u]_{k}=\{u_{i}:1\leq i\leq k\} ~~\text{and}~~ [v]_{k}=\{v_{i}:1\leq i\leq k\}.$$

  \begin{lemma}\label{lem_gen}
    For any $4\leq i\leq n-3$, the following statements hold:\\
    \noindent {\rm (1)} $u_{i}\sim v_{i-1}$, $v_{i}\sim u_{i-1}$ and $v_{i}\sim u_{i}$,\\
  \noindent {\rm (2)} $\mathtt{x}_{v_{i}}>\mathtt{x}_{v_{i+1}}$ and $\mathtt{x}_{u_{i}}>\mathtt{x}_{u_{i+1}}$.
  \end{lemma}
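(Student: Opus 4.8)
The plan is to prove both statements simultaneously by induction on $i$, extending the ladder one rung per step. For the base case $i=4$, Lemma~\ref{lem_v3u4} already gives $u_4\sim v_3$ together with $\mathtt{x}_{u_4}>\mathtt{x}_{u_5}$, and Lemma~\ref{lem_u3v4} gives $v_4\sim u_3$ together with $\mathtt{x}_{v_4}>\mathtt{x}_{v_5}$; moreover the strict inequalities $\mathtt{x}_{u_3}>\mathtt{x}_{u_4}$ and $\mathtt{x}_{v_3}>\mathtt{x}_{v_4}$ follow from Lemma~\ref{lem_neig}, by comparing $\mathtt{x}[N_G(u_3)]=2\mathtt{x}_{v_1}+\mathtt{x}_{v_4}$ with $\mathtt{x}[N_G(u_4)]\le \mathtt{x}_{v_3}+2\mathtt{x}_{v_4}$ and using $\mathtt{x}_{v_1}=\mathtt{x}_{v_2}>\mathtt{x}_{v_3}$. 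Hence for $i=4$ only the single edge $u_4\sim v_4$ remains, and I would obtain it by the same swap argument used below.

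For the inductive step I would assume (1) and (2) for every index in $[4,i-1]$. This completely fixes the neighbourhoods of $u_1,\dots,u_{i-2}$ and $v_1,\dots,v_{i-2}$ (each such $u_j$ being joined exactly to $v_{j-1},v_j,v_{j+1}$, and symmetrically for $v_j$), and it shows that $u_{i-1}$ has a unique undetermined neighbour $v_p$ and $v_{i-1}$ a unique undetermined neighbour $u_q$, with $p,q\ge i$. The decisive structural consequence is that the only two edges of $G$ joining $\{u_1,\dots,u_{i-1},v_1,\dots,v_{i-1}\}$ to the remaining vertices are precisely $u_{i-1}v_p$ and $v_{i-1}u_q$; thus $\{u_{i-1}v_p,\,v_{i-1}u_q\}$ is an edge cut whose one side is entirely determined. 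Part~(2) at $i-1$ also supplies the strict inequalities $\mathtt{x}_{u_{i-1}}>\mathtt{x}_{u_i}$ and $\mathtt{x}_{v_{i-1}}>\mathtt{x}_{v_i}$, which are exactly what the forcing lemmas will need.

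To establish (1b), i.e.\ $p=i$, suppose instead $p\ge i+1$. If $v_i\nsim u_{i-1}$ then every neighbour $u_m$ of $v_i$ has $m\ge i$, whence $\mathtt{x}_{u_m}\le \mathtt{x}_{u_i}<\mathtt{x}_{u_{i-1}}$, and also $\mathtt{x}_{v_p}\le \mathtt{x}_{v_{i+1}}\le \mathtt{x}_{v_i}$. Choosing such an $m$ with $u_m\nsim v_p$, the set $\{u_{i-1}v_p,\,u_mv_i\}$ is a pair of independent edges satisfying the hypotheses of Lemma~\ref{lem_prop}; its conclusions (P2) and (P3) --- that these two edges form an edge cut with $N_G(v_p)\cap N_G(v_i)=\emptyset$ --- together with the fact that one side of the cut is the already-determined block and with the connectivity of the level set $[V(G)]_{\le r}$ from Lemma~\ref{lem_con}, yield a contradiction (extremality enters through Lemma~\ref{lem_prop}, and $i\le n-3$ ensures that enough vertices of index $>i$ survive on the far side). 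The symmetric argument gives (1a), $q=i$, and a third application comparing $u_i$ with $v_i$ gives (1c). With $N_G(u_i)$ and $N_G(v_i)$ now determined, (2a) and (2b) follow exactly as in the proof of Lemma~\ref{lem_v3u4}: Lemma~\ref{lem_neig} compares $\mathtt{x}[N_G(v_i)]$ with $\mathtt{x}[N_G(v_{i+1})]$, and an equality $\mathtt{x}_{v_i}=\mathtt{x}_{v_{i+1}}$ would force, via Lemma~\ref{lem_adja}, some vertex to acquire degree greater than $3$.

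The hard part is the degenerate branches. Both Lemma~\ref{lem_trans}/\ref{lem_prop} and Lemma~\ref{lem_adja} require a strict inequality, so whenever two consecutive Fiedler entries happen to coincide their hypotheses fail and one must revert to the summed comparison of Lemma~\ref{lem_neig} or to the edge-cut versus disjoint-neighbourhood dichotomy of Lemma~\ref{lem_prop}, exactly as in Lemmas~\ref{lem_u2v2}--\ref{lem_u3v4}. A further subtlety is that the chosen swap edges need not be independent --- the vertices $u_m$ and $v_p$ could already be adjacent --- which forces an auxiliary case again settled by Lemma~\ref{lem_neig} together with the cubicity of $G$ and the bound $|V(G)|\ge 12$. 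Marshalling all of these sub-cases while maintaining the invariant that exactly one neighbour of each current frontier vertex is undetermined is where the genuine bookkeeping lies.
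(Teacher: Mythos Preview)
Your inductive framework is right, and your argument for (1a)/(1b) via Lemma~\ref{lem_prop} does work --- in fact (P5) alone already yields the contradiction: every neighbour of $v_i$ has index $\ge i$ and hence $\mathtt x$-value $\le \mathtt x_{u_i}<\mathtt x_{u_{i-1}}$, while (P5) forces $\min\{\mathtt x_w:w\in N_G(v_i)\setminus\{u_m\}\}\ge \mathtt x_{u_{i-1}}$. The detour through (P2), (P3) and Lemma~\ref{lem_con} is unnecessary.

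There is, however, a real ordering problem later. After (1a)--(1c) you assert that ``$N_G(u_i)$ and $N_G(v_i)$ are now determined'' and deduce (2) by comparing $\mathtt x[N_G(v_i)]$ with $\mathtt x[N_G(v_{i+1})]$. But after (1) only two of the three neighbours of $u_i$ (namely $v_{i-1},v_i$) and of $v_i$ (namely $u_{i-1},u_i$) are known; the third is some $v_r$, $u_s$ with $r,s\ge i+1$, and $N_G(v_{i+1})$ is entirely unknown, so the comparison you describe cannot be carried out. Worse, your ``third application'' for (1c) already needs a strict inequality $\mathtt x_{u_i}>\mathtt x_{u_a}$ or $\mathtt x_{v_i}>\mathtt x_{v_s}$ (Lemma~\ref{lem_prop} requires one side strict), and in the degenerate situation $\mathtt x_{u_i}=\mathtt x_{u_{i+1}}$ and $\mathtt x_{v_i}=\mathtt x_{v_{i+1}}$ this is unavailable --- so (1c) tacitly uses (2), while (2) as you describe it uses (1c). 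The circularity is not resolved by your final paragraph.

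The paper sidesteps this by proving the adjacency and the strict inequality \emph{together}: letting $u_j$ be the undetermined neighbour of $v_{i-1}$ (with $j\ge i$), one shows $\mathtt x_{u_j}>\mathtt x_{u_k}$ for every $k\ge i$, $k\ne j$, via a single application of Lemma~\ref{lem_adja} (if some $u_k$ had $\mathtt x_{u_k}\ge \mathtt x_{u_j}$, then since $\mathtt x_{v_{i-1}}>\max\{\mathtt x_v:v\in N_G(u_k)\}$, Lemma~\ref{lem_adja}(2) forces $u_j$ adjacent to all three neighbours of $u_k$, so $d(u_j)\ge 4$). This simultaneously gives $j=i$ and $\mathtt x_{u_i}>\mathtt x_{u_{i+1}}$. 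With (2) in hand before attacking (1c), the edge $u_i\sim v_i$ then follows by the same Lemma~\ref{lem_adja} device and a vertex count using $i\le n-3$. You should reorder your induction to establish the strict inequalities alongside (1a)/(1b), not after (1c).
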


  \begin{proof}
    We prove the result by induction on $i$. Consider the case $i=4$. Lemmas \ref{lem_v3u4} and \ref{lem_u3v4} show that
    \begin{eqnarray*}
      \left\{
      \begin{aligned}
        &v_{3}\sim u_{4},\\
        &u_{3}\sim v_{4},
      \end{aligned}
    \right.~~\text{and}~~
    \left\{
      \begin{aligned}
        &\mathtt{x}_{u_{4}}>\mathtt{x}_{u_{5}},\\
        &\mathtt{x}_{v_{4}}>\mathtt{x}_{v_{5}}.
      \end{aligned}
    \right.
    \end{eqnarray*}
    It suffices to show that $u_{4}\sim v_{4}$. Suppose to the contrary that $u_{4}\nsim v_{4}$.
    Note that $$\mathtt{x}_{u_{4}}>\max\{\mathtt{x}_{u}:u\in N_{G}(v_{4})\backslash\{u_{3}\}\}
    ~~\text{and}~~\mathtt{x}_{v_{4}}>\max\{\mathtt{x}_{v}:v\in N_{G}(u_{4})\backslash\{v_{3}\}\}.$$
    According to Lemma \ref{lem_adja}, it is easy to see that every vertex in $N_{G}(v_{4})\backslash\{u_{3}\}$
    is adjacent to every vertex in $N_{G}(u_{4})\backslash\{v_{3}\}$.
    Therefore, the subgraph of $G$ induced by $[u]_{4}\cup [v]_{4}\cup N_{G}(u_{4})\backslash\{v_{3}\}\cup N_{G}(v_{4})\backslash\{u_{3}\}$
    is a cubic bipartite graph. This implies that $\{v_{j}:5\leq j\leq n\}\cup \{u_{j}:5\leq j\leq n\}$ contains only four vertices.
    But this contradicts the fact $4\leq i\leq n-3$. Hence $u_{4}\sim v_{4}$.
    The result holds for $i=4$.

    Suppose now that $i\geq 5$. By the induction hypothesis, we obtain that
    \begin{eqnarray*}
      \left\{
      \begin{aligned}
        &v_{i-1}\sim u_{i-1},\\
        &u_{i-1}\sim v_{i-2},\\
        &v_{i-1}\sim u_{i-2},
      \end{aligned}
    \right.~~\text{and}~~
    \left\{
      \begin{aligned}
        &\mathtt{x}_{u_{i-1}}>\mathtt{x}_{u_{i}},\\
        &\mathtt{x}_{v_{i-1}}>\mathtt{x}_{v_{i}}.
      \end{aligned}
    \right.
    \end{eqnarray*}
    We first prove that $u_{i}\sim v_{i-1}$ and $\mathtt{x}_{u_{i}}>\mathtt{x}_{u_{i+1}}$.
    Let $u_{j}$ be the neighbour of $v_{i-1}$ with $j\geq i$. It suffices to prove that $\mathtt{x}_{u_{j}}>\max\{\mathtt{x}_{u_{k}}:k\geq i~\text{and}~k\neq j\}$.
    Suppose to the contrary that there is a vertex $u_{k}$ such that $\mathtt{x}_{u_{k}}\geq \mathtt{x}_{u_{j}}$, where $k\geq i$ and $k\neq j$.
    In this case, it is to see that $N_{G}(u_{k})\subseteq [v]_{n}\backslash [v]_{i-1}$.
    Clearly, $\mathtt{x}_{v_{i-1}}>\max\{\mathtt{x}_{v}:v\in [v]_{n}\backslash [v]_{i-1}\}$,
    which implies that $\mathtt{x}_{v_{i-1}}>\max\{\mathtt{x}_{v}:v\in N_{G}(u_{k})\}$.
    By Lemma \ref{lem_adja}, the vertex $u_{j}$ is adjacent to all vertices in $N_{G}(u_{k})$, and hence the degree of $u_{j}$ is at least 4, a contradiction.
    Therefore, it follows that $u_{i}\sim v_{i-1}$ and $\mathtt{x}_{u_{i}}>\mathtt{x}_{u_{i+1}}$.
    Similarly, we can show that  $v_{i}\sim u_{i-1}$ and $\mathtt{x}_{v_{i}}>\mathtt{x}_{v_{i+1}}$.

    It remains to prove that $u_{i}\sim v_{i}$. Assume that $u_{i}\nsim v_{i}$. Then one can see that
    $N_{G}(u_{i})\backslash\{v_{i-1}\}\subseteq [v]_{n}\backslash [v]_{i}$, $N_{G}(v_{i})\backslash\{u_{i-1}\}\subseteq [u]_{n}\backslash [u]_{i}$
    and $|N_{G}(u_{i})\backslash\{v_{i-1}\}|=|N_{G}(v_{i})\backslash\{u_{i-1}\}|=2$.
    Since $\mathtt{x}_{u_{i}}>\mathtt{x}_{u_{i+1}}$ and $\mathtt{x}_{v_{i}}>\mathtt{x}_{v_{i+1}}$,
    by Lemma \ref{lem_adja}, every vertex in $N_{G}(u_{i})\backslash\{v_{i-1}\}$ is adjacent to every vertex in $N_{G}(v_{i})\backslash\{u_{i-1}\}$.
    It is easy to see that the subgraph of $G$ induced by $[u]_{i}\cup [v]_{i}\cup N_{G}(u_{i})\backslash\{v_{i-1}\}\cup N_{G}(v_{i})\backslash\{u_{i-1}\}$
    is a cubic bipartite graph.
    This implies that there are only four vertices in $([u]_{n}\backslash[u]_{i})\cup ([v]_{n}\backslash[v]_{i})$, which yields that $i=n-2$.
    But this contradicts the fact $i\leq n-3$. Hence $u_{i}\sim v_{i}$ and the proof is completed.
  \end{proof}

In summary, according to Lemmas \ref{lem_u2v2}, \ref{lem_v2u3}, \ref{lem_u2v3} and \ref{lem_gen},
we determine the adjacency relations of all vertices in $[u]_{n-3}\cup [v]_{n-3}$.
Now we are ready to prove Theorem \ref{the_extr}.

\begin{proof}[\bf Proof of Theorem \ref{the_extr}]
  If $v_{n-3}\nsim u_{n-2}$, then $v_{n-3}$ must be adjacent to a vertex $u\in \{u_{n-1},u_{n}\}$.
  Hence $N_{G}(u_{n-2})=\{v_{n-2},v_{n-1},v_{n}\}$.
  Lemma \ref{lem_gen} shows that $\mathtt{x}_{v_{n-3}}>\mathtt{x}_{v_{n-2}}\geq \mathtt{x}_{v_{n-1}}\geq \mathtt{x}_{v_{n}}$.
  Morevoer, since $\mathtt{x}_{u_{n-2}}\geq \mathtt{x}_{u}$, by Lemma \ref{lem_adja}, the vertex $u$ is adjacent to all vertices in $N_{G}(u_{n-2})$.
  This implies that the degree of $u$ is equal to 4, a contradiction. Hence $v_{n-3}\sim u_{n-2}$.
  Similarly, one can see that $u_{n-3}\sim v_{n-2}$.

  When $u_{n-2}\sim v_{n-2}$, the graph $G$ cannot be a cubic bipartite graph.
  If $u_{n-2}\nsim v_{n-2}$, it is easy to see that $G$ is clearly isomorphic to $H_{2n}$, and the result follows.
\end{proof}

\section{Algebraic connectivity of the extremal graph}\label{ex_value}

In this section, we consider the asymptotic value for the algebraic connectivity of the extremal graph.
As mentioned in Section \ref{ex_graph}, $H_{2n}$ is the unique cubic bipartite graph with minimum algebraic connectivity.
Label the vertices of $H_{2n}$ as shown in Figure \ref{fig_main}.
Before proceeding further, let us present the following property for the Fiedler vector of $H_{2n}.$

\begin{lemma}\label{lem_sym}
The extremal graph $H_{2n}$ has a Fiedler vector $\mathtt{z}$ such that $\mathtt{z}_{u_{i}}=\mathtt{z}_{v_{i}}$ for any $1\leq i\leq n$.
\end{lemma}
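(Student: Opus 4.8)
The plan is to exploit the reflection symmetry of $H_{2n}$ that interchanges its two colour classes, and then to symmetrize an arbitrary Fiedler vector. First I would record that, with the labelling fixed in Figure~\ref{fig_main} (equivalently, the labelling produced by Lemmas~\ref{lem_u2v2}, \ref{lem_v2u3}, \ref{lem_u2v3} and \ref{lem_gen}), the adjacency of $H_{2n}$ is symmetric under swapping the two sides: in the interior one has $N_{G}(u_i)=\{v_{i-1},v_i,v_{i+1}\}$ and $N_{G}(v_i)=\{u_{i-1},u_i,u_{i+1}\}$, with matching relations at the two ends, so that $u_i\sim v_j$ if and only if $u_j\sim v_i$ for all $i,j$. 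Since $H_{2n}$ is bipartite, every edge has the form $\{u_i,v_j\}$, and this symmetry says exactly that the map $\sigma$ defined by $\sigma(u_i)=v_i$, $\sigma(v_i)=u_i$ sends edges to edges; hence $\sigma$ is an automorphism of $H_{2n}$. Let $P$ be its permutation matrix; then $P^{2}=I$ and $PL(H_{2n})=L(H_{2n})P$.

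Next I would take any Fiedler vector $\mathtt{x}$ of $H_{2n}$ — for definiteness the ordered one used throughout Section~\ref{ex_graph} — and form $\mathtt{z}=\mathtt{x}+P\mathtt{x}$. Because $L$ commutes with $P$, the vector $P\mathtt{x}$ is again an eigenvector for the eigenvalue $a(H_{2n})$, hence so is $\mathtt{z}$; and $P\mathtt{z}=P\mathtt{x}+P^{2}\mathtt{x}=\mathtt{z}$, which is precisely the statement $\mathtt{z}_{u_i}=\mathtt{z}_{v_i}$ for every $i$. Since $a(H_{2n})>0$, any eigenvector for $a(H_{2n})$ is automatically orthogonal to $\mathtt{1}$, so as soon as $\mathtt{z}\neq 0$ it is a genuine Fiedler vector with the asserted symmetry.

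The only thing that can go wrong — and the step I expect to be the crux — is the degenerate possibility $\mathtt{z}=0$, i.e. that $\mathtt{x}$ is antisymmetric, $P\mathtt{x}=-\mathtt{x}$. I would rule this out using the sign information already proved. Evaluating $P\mathtt{x}=-\mathtt{x}$ at the coordinate $u_{1}$ gives $\mathtt{x}_{v_{1}}=(P\mathtt{x})_{u_{1}}=-\mathtt{x}_{u_{1}}$, so that $\mathtt{x}_{u_{1}}$ and $\mathtt{x}_{v_{1}}$ would have opposite signs. But Lemma~\ref{lem_pos} asserts $\mathtt{x}_{u_{1}}>0$ and $\mathtt{x}_{v_{1}}>0$ simultaneously, a contradiction. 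Hence $\mathtt{z}\neq 0$, and the proof is complete. If one prefers to avoid committing to a particular labelling, the same point can be phrased via the $P$-invariant decomposition $\mathbb{R}^{2n}=S^{+}\oplus S^{-}$ into symmetric and antisymmetric subspaces: it then suffices to show that the eigenspace of $a(H_{2n})$ meets $S^{+}$, and the positivity of the leading entries of some Fiedler vector forces exactly this nonvanishing of the symmetric component.
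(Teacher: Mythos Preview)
Your proof is correct and is essentially the paper's own argument, phrased in the language of the permutation matrix $P$ of the automorphism $\sigma$: the paper too picks a Fiedler vector with $\mathtt{x}_{u_1}>0$ and $\mathtt{x}_{v_1}>0$ (Lemma~\ref{lem_pos}), forms the swapped vector $\mathtt{y}=P\mathtt{x}$, and sets $\mathtt{z}=\mathtt{x}+\mathtt{y}$, noting that $\mathtt{z}_{u_1}=\mathtt{x}_{u_1}+\mathtt{x}_{v_1}>0$ to rule out $\mathtt{z}=0$. The only cosmetic difference is that you frame the non-vanishing step as excluding $P\mathtt{x}=-\mathtt{x}$, which is the same computation.
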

\begin{proof}
According to Lemma \ref{lem_pos}, for the extremal graph $H_{2n}$,
there is a Fiedler vector $\mathtt{x}$ such that $\mathtt{x}_{u_{1}}>0$ and $\mathtt{x}_{v_{1}}>0$.
Let $a_{i}=\mathtt{x}_{u_{i}}$ and $b_{i}=\mathtt{x}_{v_{i}}$, where $1\leq i\leq n$.
Construct a vector $\mathtt{y}$ on the vertices of $H_{2n}$ such that $\mathtt{y}_{u_{i}}=b_{i}$ and $\mathtt{y}_{v_{i}}=a_{i}$ for $1\leq i\leq n$.
According to the symmetry of $H_{2n}$, it is easy to see that $\mathtt{y}$ is also a Fiedler vector of $H_{2n}$.
Let us consider a new vector $\mathtt{z}=\mathtt{x}+\mathtt{y}$.
Since $\mathtt{x}\bot \mathtt{1}$ and $\mathtt{y}\bot \mathtt{1}$, then we obtain that $\mathtt{z}\bot \mathtt{1}$.
Clearly, $\mathtt{z}$ is a nonzero vector since $\mathtt{z}_{u_{1}}=\mathtt{x}_{u_{1}}+\mathtt{x}_{v_{1}}$.
Hence $\mathtt{z}$ is also a Fiedler vector of $H_{2n}$.
Moreover, one can see that $\mathtt{z}_{u_{i}}=a_{i}+b_{i}=\mathtt{z}_{v_{i}}$ for $1\leq i\leq n$.
Obviously $\mathtt{z}$ is a Fiedler vector of $H_{2n}$ with the desired property.
\end{proof}

Let $P_{n}$ denote a path on $n$ vertices. The Laplacian spectrum of $P_{n}$ consists of the numbers $2-2\cos(\pi j/n)$, where $0\leq j\leq n-1$ (see, e.g., \cite{Brouwer2012}). Clearly, $a(P_{n})=2-2\cos(\pi/n)$. Note also that $L(P_{n})$ is a tridiagonal matrix.
The eigenvalues and eigenvectors of a certain tridiagonal matrix were discussed in \cite{Willms2008}.
Indeed, applying the conclusions in \cite{Willms2008}, one can also derive all eigenvalues and eigenvectors of $L(P_{n})$.
In particular, the Fiedler vector of $P_{n}$ is skew symmetry. To make the paper self-contained, we give a proof by using Lemma \ref{lem_con}.

\begin{lemma}\label{lem_path}
  The Fiedler vector of $P_{n}$ is skew symmetry.
\end{lemma}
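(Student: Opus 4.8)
The plan is to exploit the mirror symmetry of $P_{n}$ together with the simplicity of its algebraic connectivity, and then use Lemma \ref{lem_con} to eliminate the ``symmetric'' alternative. Label the vertices of $P_{n}$ by $1,2,\dots,n$ along the path, and let $\sigma$ be the vertex-reversal $k\mapsto n+1-k$. Since $\sigma$ is an automorphism of $P_{n}$, its permutation matrix $S$ satisfies $S=S^{t}=S^{-1}$ and $SL(P_{n})S=L(P_{n})$. Consequently, if $\mathtt{x}$ is a Fiedler vector, then $S\mathtt{x}$ (the vector with entries $(S\mathtt{x})_{k}=\mathtt{x}_{n+1-k}$) is again an eigenvector of $L(P_{n})$ for the same eigenvalue $a(P_{n})$.

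First I would record that $a(P_{n})$ is a simple eigenvalue. This is immediate from the given spectrum: the numbers $2-2\cos(\pi j/n)$, $0\le j\le n-1$, are pairwise distinct because $\cos$ is strictly decreasing on $[0,\pi)$ and the arguments $\pi j/n$ lie in $[0,\pi)$. Hence the Fiedler eigenspace is one-dimensional and $S\mathtt{x}=\lambda\mathtt{x}$ for some scalar $\lambda$; from $S^{2}=I$ we get $\lambda\in\{1,-1\}$. The case $\lambda=-1$ is exactly skew symmetry, $\mathtt{x}_{k}=-\mathtt{x}_{n+1-k}$, so it remains only to exclude the symmetric case $\lambda=1$, i.e.\ $\mathtt{x}_{k}=\mathtt{x}_{n+1-k}$ for all $k$.

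To rule this out I would apply Lemma \ref{lem_con} with $r=0$: the induced subgraphs on $[V(P_{n})]_{\ge 0}$ and on $[V(P_{n})]_{\le 0}$ are both connected. In a path a connected induced subgraph is precisely a set of consecutive vertices, so each of these two vertex sets is an interval of $\{1,\dots,n\}$. If $\mathtt{x}$ were symmetric, both sets would be invariant under $\sigma$; an interval $[a,b]$ fixed by the reversal must satisfy $a+b=n+1$, i.e.\ it is centred at $(n+1)/2$. Both sets are nonempty, since $\mathtt{x}\ne 0$ and $\mathtt{x}\bot\mathtt{1}$ force $\mathtt{x}$ to have at least one positive and at least one negative entry. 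Two nonempty centred intervals are nested, so their union equals the larger one; since here the union is all of $\{1,\dots,n\}$, one of them already equals $\{1,\dots,n\}$. That forces either $\mathtt{x}_{k}\ge 0$ for every $k$ or $\mathtt{x}_{k}\le 0$ for every $k$, again contradicting $\mathtt{x}\bot\mathtt{1}$ with $\mathtt{x}\ne 0$. This contradiction shows $\lambda=-1$, completing the proof.

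The only delicate point is the bookkeeping of the zero entries in the interval argument, since the two intervals overlap exactly on the zero set; but this does not affect the conclusion, because whichever interval equals $\{1,\dots,n\}$ still pins a single sign on all of $\mathtt{x}$. Thus the main conceptual step is the reduction, via simplicity of $a(P_{n})$, to the two symmetry types $\lambda=\pm1$, and the main technical step is the short Lemma~\ref{lem_con} argument showing that a symmetric sign pattern on a path cannot be orthogonal to $\mathtt{1}$.
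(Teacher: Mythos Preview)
Your proof is correct and follows essentially the same route as the paper: use the reversal automorphism together with simplicity of $a(P_n)$ to force $S\mathtt{x}=\pm\mathtt{x}$, then invoke Lemma~\ref{lem_con} to exclude the symmetric case. The only cosmetic difference is in how the contradiction is extracted: the paper first shows $\mathtt{x}_{v_1}>0$ via the eigenvalue recursion and then observes that $v_1,v_n\in[V(P_n)]_{\ge 0}$ are separated by a negative interior vertex, whereas you argue more structurally that both sign-level sets are $\sigma$-invariant intervals and hence nested.
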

\begin{proof}
  Let $\mathtt{x}$ be a unit Fiedler vector of $P_{n}$. Suppose that $P_{n}=v_{1}v_{2}\cdots v_{n}$.
  Without loss of generality, we may assume that $\mathtt{x}_{v_{1}}\geq 0$.

  We first claim that $\mathtt{x}_{v_{1}}>0$. Suppose that $\mathtt{x}_{v_{1}}=0$.
  Since $a(P_{n})\mathtt{x}_{v_{1}}=\mathtt{x}_{v_{1}}-\mathtt{x}_{v_{2}}$, then we have $\mathtt{x}_{v_{2}}=0$.
  Furthermore, one can see that $\mathtt{x}_{v_{3}}=0$ since $a(P_{n})\mathtt{x}_{v_{2}}=2\mathtt{x}_{v_{2}}-\mathtt{x}_{v_{1}}-\mathtt{x}_{v_{3}}$.
  Repeating this process, we will finally obtain a zero vector, a contradiction.

  Let us consider another vector $\mathtt{y}$ on the vertices of $P_{n}$, where $\mathtt{y}_{v_{i}}=\mathtt{x}_{v_{n+1-i}}$ for $1\leq i\leq n$.
  Clearly, the vector $\mathtt{y}$ is obtained from $\mathtt{x}$ by exchanging the entries corresponding to vertices $v_{i}$ and $v_{n+1-i}$.
  According to the symmetry of $P_{n}$, $\mathtt{y}$ is also a Fiedler vector of $P_{n}$.
  Recall that $a(P_{n})$ is a simple eigenvalue of $L(G)$. Hence $\mathtt{y}=\pm\mathtt{x}$.

  If $\mathtt{y}=\mathtt{x}$, then $\mathtt{x}_{v_{1}}=\mathtt{x}_{v_{n}}$.
  Since $\mathtt{x}\bot\mathtt{1}$, then the vector contains a negative entry.
  Suppose that $\mathtt{x}_{v_{i}}<0$ with $1<i<n$.
  Let $[V(P_{n})]_{\geq 0}=\{v\in V(P_{n}):\mathtt{x}_{v}\geq 0\}$.
  Lemma \ref{lem_con} shows that the subgraph induced by $[V(P_{n})]_{\geq 0}$ is connected.
  But this is impossible since $v_{i}\notin [V(P_{n})]_{\geq 0}$.
  It follows that $\mathtt{y}=-\mathtt{x}$. Hence $\mathtt{x}_{v_{i}}=-\mathtt{x}_{v_{n+1-i}}$ for $1\leq i\leq n$, as required.
\end{proof}

\begin{theorem}\label{the_H}
The algebraic connectivity of $H_{2n}$ is $a(H_{2n})=(1+o(1))\frac{\pi^{2}}{n^{2}}$.
\end{theorem}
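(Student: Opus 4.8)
The plan is to exploit the symmetry of $H_{2n}$ to collapse the problem to a weighted path on $n$ ``super-vertices,'' and then to sandwich $a(H_{2n})$ between $a(P_{n})$ and $a(P_{n})+o(\pi^{2}/n^{2})$. First I would group the vertices into classes $C_{i}=\{u_{i},v_{i}\}$, $1\le i\le n$. Since $H_{2n}$ is invariant under the involution swapping $u_{i}\leftrightarrow v_{i}$, we have $u_{i}\sim v_{j}$ if and only if $v_{i}\sim u_{j}$, so $\{C_{1},\dots,C_{n}\}$ is an equitable partition; let $L_{B}=3I-B$ be the associated quotient Laplacian, where $B$ is the $n\times n$ quotient (adjacency) matrix. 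A short computation shows the within-class contributions cancel, so $L_{B}$ is exactly the Laplacian of a weighted super-graph $Q_{n}$ on $\{C_{1},\dots,C_{n}\}$. Reading off the adjacencies established in Section~\ref{ex_graph} (Lemmas~\ref{lem_u2v2}--\ref{lem_gen}), one finds that $Q_{n}$ is the path $C_{1}C_{2}\cdots C_{n}$ together with exactly two extra unit-weight chords, $C_{1}C_{3}$ and $C_{n-2}C_{n}$; that is, $Q_{n}=P_{n}+\{C_{1}C_{3},\,C_{n-2}C_{n}\}$.

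Next I would identify $a(H_{2n})$ with the second smallest eigenvalue $a(L_{B})$ of the quotient. By Lemma~\ref{lem_sym} the algebraic connectivity of $H_{2n}$ is attained by a class-constant Fiedler vector, so $a(H_{2n})$ is an eigenvalue of $L_{B}$; since the spectrum of $L_{B}$ is contained in that of $L(H_{2n})$ and both matrices have $0$ as least eigenvalue (with the class-constant all-ones eigenvector), $a(H_{2n})$ must be the second smallest eigenvalue of $L_{B}$, i.e. $a(H_{2n})=a(L_{B})$.

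For the lower bound I would invoke monotonicity of the algebraic connectivity under edge deletion: removing the two chords from $Q_{n}$ leaves $P_{n}$, and since $L(Q_{n})\succeq L(P_{n})$ this gives $a(L_{B})\ge a(P_{n})=2-2\cos(\pi/n)$. For the upper bound I would lift the Fiedler vector $x$ of $P_{n}$ to the class-constant test vector $\mathtt{z}$ with $\mathtt{z}_{u_{i}}=\mathtt{z}_{v_{i}}=x_{i}$. By Lemma~\ref{lem_path} the vector $x$ is skew symmetric, hence $\sum_{i}x_{i}=0$ and $\mathtt{z}\bot\mathtt{1}$, so (\ref{eq1}) applies and gives
\[
a(L_{B})\le\frac{x^{t}L_{B}x}{x^{t}x}
=a(P_{n})+\frac{(x_{1}-x_{3})^{2}+(x_{n-2}-x_{n})^{2}}{\sum_{i}x_{i}^{2}}.
\]
Using the eigenequation of $P_{n}$ at the endpoints (or the explicit form of $x$) one checks that the boundary differences satisfy $|x_{1}-x_{3}|=O(n^{-2})$ while $\sum_{i}x_{i}^{2}=\Theta(n)$, so the correction term is $O(n^{-5})=o(\pi^{2}/n^{2})$. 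Combining the two bounds with the expansion $2-2\cos(\pi/n)=(1+o(1))\pi^{2}/n^{2}$ yields $a(H_{2n})=(1+o(1))\pi^{2}/n^{2}$.

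The routine parts here are the Taylor expansion of the cosine and the bookkeeping of the two chord terms in the Rayleigh quotient. The step that needs the most care is the first one: pinning down the quotient super-graph $Q_{n}$ precisely from the structural lemmas, in particular verifying that the only deviations from a clean path are the two boundary chords and that every super-edge carries weight exactly one, so that both the edge-deletion argument and the evaluation of the Rayleigh quotient are exact.
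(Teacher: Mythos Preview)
Your proposal is correct and follows the same skeleton as the paper: use the $u_i\leftrightarrow v_i$ symmetry to reduce to class-constant vectors (the paper via Lemma~\ref{lem_sym}, you via the equitable-partition quotient, which amounts to the same thing), then sandwich $a(H_{2n})$ between path quantities. Your identification of the quotient $Q_n=P_n+\{C_1C_3,C_{n-2}C_n\}$ is exactly right, and the lower bound by dropping the two chord terms is identical to the paper's inequality~(\ref{eq3}).

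The one genuine difference is in the upper bound. The paper lifts the Fiedler vector of $P_{n-4}$ and extends it by constants on the first three and last three super-vertices, so that the chord contributions $(x_1-x_3)^2$ and $(x_{n-2}-x_n)^2$ vanish identically and one gets $a(H_{2n})\le a(P_{n-4})$ with no estimation needed. You instead keep the Fiedler vector of $P_n$ and bound the chord correction directly via the endpoint eigenequation, obtaining $(x_1-x_3)^2/\|x\|^2=O(n^{-5})$. Your route gives a sharper error term and avoids the detour through $P_{n-4}$; the paper's route avoids any quantitative eigenvector analysis. Both reach the same asymptotic.
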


\begin{proof}
We first prove that $(1+o(1))\frac{\pi^{2}}{n^{2}}$ is a lower bound for $a(H_{2n})$.
Let $\mathtt{z}$ be a Fiedler vector of $H_{2n}$ satisfying the property in Lemma \ref{lem_sym}.
Hence $\mathtt{z}_{u_{i}}=\mathtt{z}_{v_{i}}$ for any $1\leq i\leq n$.
Note that
$$a(H_{2n})=\frac{\mathtt{z}^{t}L(H_{2n})\mathtt{z}}{\mathtt{z}^{t}\mathtt{z}}.$$
Expanding the right side of the above equality, we have
\begin{eqnarray}\label{eq3}
  a(H_{2n})= \frac{2(\mathtt{z}_{v_{1}}-\mathtt{z}_{v_{3}})^{2}+2(\mathtt{z}_{v_{n-2}}-\mathtt{z}_{v_{n}})^{2}
  +2\sum_{i=1}^{n-1}(\mathtt{z}_{v_{i}}-\mathtt{z}_{v_{i+1}})^{2}}{2\sum_{i=1}^{n}\mathtt{z}_{v_{i}}^{2}}
  \geq \frac{\sum_{i=1}^{n-1}(\mathtt{z}_{v_{i}}-\mathtt{z}_{v_{i+1}})^{2}}{\sum_{i=1}^{n}\mathtt{z}_{v_{i}}^{2}}.
\end{eqnarray}
Let $P_{n}$ be a path on $n$ vertices. According to (\ref{eq1}), it follows that
\begin{eqnarray}\label{eq4}
  a(P_{n})\leq \frac{\sum_{i=1}^{n-1}(\mathtt{z}_{v_{i}}-\mathtt{z}_{v_{i+1}})^{2}}{\sum_{i=1}^{n}\mathtt{z}_{v_{i}}^{2}}.
\end{eqnarray}
Combining (\ref{eq3}) and (\ref{eq4}), we obtain $a(H_{2n})\geq a(P_{n})$.
It is well-known that $a(P_{n})=2-2\cos (\pi/n)$.
Thus we obtain that
$$a(H_{2n})\geq 2-2\cos\left( \frac{\pi}{n}\right)=4\sin^{2}\left( \frac{\pi}{2n}\right)= (1+o(1))\frac{\pi^{2}}{n^{2}},$$
which implies the desired lower bound.

Now we show that $(1+o(1))\frac{\pi^{2}}{n^{2}}$ is an upper bound for $a(H_{2n})$.
Let $P_{n-4}=w_{1}w_{2}\cdots w_{n-4}$ be a path on $n-4$ vertices.
Assume that $\mathtt{x}$ is a Fiedler vector of $P_{n-4}$.
Set $\mathtt{x}_{w_{i}}=a_{i}$ for $1\leq i\leq n-4$.
Construct a new vector $\mathtt{y}$ on vertices of $H_{2n}$ as follows:
 \begin{eqnarray*}
     \mathtt{y}_{u_{i}}=\mathtt{y}_{v_{i}}= \left\{
      \begin{aligned}
    & a_{1}, && \text{for}~~1\leq i\leq 3,\\
    &  a_{i-2}, & &\text{for}~~4\leq i\leq n-3,\\
    &  a_{n-4}, && \text{for}~~n-2\leq i\leq n.
      \end{aligned}
    \right.
    \end{eqnarray*}
According to Lemma \ref{lem_path}, it is easy to see that $\mathtt{y}\bot\mathtt{1}$.
By (\ref{eq1}), we obtain that
\begin{eqnarray}\label{eq5}
a(H_{2n})\leq \frac{\mathtt{y}^{t}L(H_{2n})\mathtt{y}}{\mathtt{y}^{t}\mathtt{y}}
=\frac{2\sum_{i=1}^{n-5}(a_{i}-a_{i+1})^{2}}{4a_{1}^{2}+4a_{n-4}^{2}+2\sum_{i=1}^{n-4}a_{i}^{2}}
\leq \frac{\sum_{i=1}^{n-5}(a_{i}-a_{i+1})^{2}}{\sum_{i=1}^{n-4}a_{i}^{2}}.
\end{eqnarray}
On the other hand, since $\mathtt{x}$ is a Fiedler vector of $P_{n-4}$, it follows that
\begin{eqnarray}\label{eq6}
a(P_{n-4})=\frac{\mathtt{x}^{t}L(P_{n-4})\mathtt{x}}{\mathtt{x}^{t}\mathtt{x}}=\frac{\sum_{i=1}^{n-5}(a_{i}-a_{i+1})^{2}}{\sum_{i=1}^{n-4}a_{i}^{2}}.
\end{eqnarray}
Combining (\ref{eq5}) and (\ref{eq6}), we obtain
$$a(H_{2n})\leq a(P_{n-4})=2-2\cos\left( \frac{\pi}{n-4}\right)=(1+o(1))\frac{\pi^{2}}{n^{2}}.$$
Therefore, $a(H_{2n})=(1+o(1))\frac{\pi^{2}}{n^{2}}$.
\end{proof}

Combining Theorems \ref{the_extr} and \ref{the_H}, we obtain Theorem \ref{the_min_val} immediately.

\section{Concluding remarks}

In this paper, we determine the unique graph with minimum algebraic connectivity among all connected cubic bipartite graphs,
and obtain the asymptotic value of the minimum algebraic connectivity. For any $k$-regular graph $G$, its diagonal martix of vertex degrees is $D(G)=kI$,
where $I$ is the identity matrix. According to the definition of Laplacian matrix, one can see that the Laplacian matrix and adjacency matrix of $G$ satisfy
$$L(G)=kI-A(G).$$
The difference between the two largest eigenvalues of $A(G)$ is called the spectral gap of $G$. According to the above equation, one can see that
the spectral gap of $G$ is equal to its algebraic connectivity. Using this fact, Theorems \ref{the_min_val} and \ref{the_extr} imply the following results directly.

\begin{theorem}
  The minimum spectral gap of connected cubic bipartite graphs on $2n$ vertices is $(1+o(1))\frac{\pi^{2}}{n^{2}}$.
\end{theorem}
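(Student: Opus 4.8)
The plan is to observe that the spectral gap of a connected cubic graph coincides exactly with its algebraic connectivity, after which the assertion becomes an immediate consequence of Theorem \ref{the_min_val}. First I would record the identity $L(G)=3I-A(G)$, which holds because every cubic graph is $3$-regular and hence $D(G)=3I$. Writing the adjacency eigenvalues as $\mu_{1}\geq \mu_{2}\geq \cdots \geq \mu_{2n}$ and the Laplacian eigenvalues as $0=\lambda_{1}\leq \lambda_{2}\leq \cdots \leq \lambda_{2n}$, this identity yields the correspondence $\lambda_{i}=3-\mu_{i}$, with the two orderings reversed. Since $G$ is connected and $3$-regular, the largest adjacency eigenvalue is $\mu_{1}=3$, with the all-ones vector $\mathtt{1}$ as its eigenvector, and it is paired with $\lambda_{1}=0$.

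Consequently the second largest adjacency eigenvalue satisfies $\mu_{2}=3-\lambda_{2}=3-a(G)$, so that the spectral gap equals $\mu_{1}-\mu_{2}=3-(3-a(G))=a(G)$. Thus for every connected cubic bipartite graph on $2n$ vertices the spectral gap equals the algebraic connectivity, and minimizing one of these quantities over $\mathcal{B}(2n,3)$ is identical to minimizing the other. Applying Theorem \ref{the_min_val}, the minimum spectral gap of connected cubic bipartite graphs on $2n$ vertices is therefore $(1+o(1))\frac{\pi^{2}}{n^{2}}$, as claimed.

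There is no genuine obstacle in this argument. The only point requiring a little care is verifying that the correspondence $\lambda_{i}=3-\mu_{i}$ correctly matches the \emph{two largest} adjacency eigenvalues with the pair $\{0,a(G)\}$ on the Laplacian side. This identification hinges on connectivity, which ensures that $\mu_{1}=3$ is the simple dominant adjacency eigenvalue and is paired with $\lambda_{1}=0$; once this anchoring is fixed, the remaining eigenvalues align in reversed order automatically, and the claim follows as a direct corollary rather than as an independent result.
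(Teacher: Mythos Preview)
Your proposal is correct and follows essentially the same approach as the paper: the paper also deduces this theorem directly from Theorem~\ref{the_min_val} via the identity $L(G)=3I-A(G)$ for cubic graphs, which makes the spectral gap coincide with the algebraic connectivity. You have simply spelled out the eigenvalue correspondence in slightly more detail than the paper does.
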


\begin{theorem}
  Among all connected cubic bipartite graphs with at least 12 vertices, $H_{2n}$ is the unique graph with minimum spectral gap.
\end{theorem}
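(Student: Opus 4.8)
The plan is to derive this statement as an immediate corollary of Theorem \ref{the_extr}, exploiting the exact coincidence between the spectral gap and the algebraic connectivity for regular graphs that was recorded at the start of this section. First I would fix a connected cubic graph $G$ on $2n$ vertices and write its Laplacian as $L(G)=3I-A(G)$. Because $G$ is $3$-regular, the all-ones vector $\mathtt{1}$ is a common eigenvector of $A(G)$ and $L(G)$, and every adjacency eigenvalue $\lambda$ corresponds to the Laplacian eigenvalue $3-\lambda$; this correspondence is an order-reversing bijection between the two spectra.

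Next I would pin down the two relevant eigenvalues. By Perron--Frobenius, the largest adjacency eigenvalue of a connected cubic graph is $\lambda_{1}=3$, which matches the smallest Laplacian eigenvalue $0$, both attained on $\mathtt{1}$. Under the order-reversing bijection above, the second largest adjacency eigenvalue $\lambda_{2}$ then corresponds to the second smallest Laplacian eigenvalue, namely the algebraic connectivity $a(G)$. Hence $a(G)=3-\lambda_{2}$, and the spectral gap of $G$ is $\lambda_{1}-\lambda_{2}=3-\lambda_{2}=a(G)$. This shows that, as functions on the class $\mathcal{B}(2n,3)$, the spectral gap and the algebraic connectivity are literally equal, not merely comparable.

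Finally I would combine this identity with Theorem \ref{the_extr}. Since minimizing the spectral gap over $\mathcal{B}(2n,3)$ is exactly minimizing $a(G)$, and Theorem \ref{the_extr} asserts that $H_{2n}$ is the unique minimizer of $a(G)$ among connected cubic bipartite graphs for $n\geq 6$ (equivalently, on at least $12$ vertices), the graph $H_{2n}$ is also the unique minimizer of the spectral gap in the same range. I do not expect any genuine obstacle: the entire content is the eigenvalue correspondence for regular graphs, after which the conclusion is automatic. The only point meriting care is the uniqueness assertion, which transfers verbatim precisely because the spectral gap coincides with $a(G)$ \emph{pointwise} on $\mathcal{B}(2n,3)$, so that the two optimization problems have identical minimizers and not just identical optimal values.
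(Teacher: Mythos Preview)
Your proposal is correct and follows essentially the same route as the paper: the paper records the identity $L(G)=kI-A(G)$ for $k$-regular graphs, observes that the spectral gap therefore coincides with the algebraic connectivity, and then states the theorem as a direct consequence of Theorem~\ref{the_extr}. Your write-up simply spells out this identification a bit more carefully (via the order-reversing bijection of spectra and Perron--Frobenius), which is fine but not additional content.
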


Lemma \ref{lem_trans} establishes a transformation which strictly decreases the algebraic connectivity.
We remark that this transformation also works for general $k$-regular bipartite graphs.
Then it is natural to investigate the minimum algebraic connectivity (spectral gap) of $k$-regular bipartite graphs for $k\geq 4$.
Based on our observations, we think that the extremal graph will be path-like when $n$ is sufficiently large.
In Theorem \ref{the_equ}, we present a spectral characterization for cubic bipartite graphs with maximum number of perfect matchings.
For general $k$-regular bipartite graphs, it is also interesting to find the spectral characterization for the extremal graphs with maximum number of perfect matchings.

\section*{Acknowledgements}
The research of Ruifang Liu is supported by National Natural Science Foundation of China (Nos. 11971445 and 12171440) and Natural Science Foundation of Henan Province (No. 202300410377). The research of Jie Xue is supported by National Natural Science Foundation of China (No. 12001498)
and China Postdoctoral Science Foundation (No. 2022TQ0303).

\section*{References}

\end{document}